\documentclass[reqno]{amsart} 

\usepackage{
  bm,
  bbm,
  color,
  a4wide,
  amssymb,
  amsmath,
  latexsym,
  calligra,
  mathrsfs,
  stmaryrd,
  hyperref,
  graphicx,
  mathtools,
  enumitem,
  tikz,
  booktabs,
  cancel
}
\usepackage[normalem]{ulem}
\usepackage[2cell,arrow,curve,matrix]{xy}

\newcommand{\p}{\mathfrak{p}}
\newcommand{\q}{\mathfrak{q}}

\newcommand{\fa}{\mathfrak{a}}

\newcommand{\fm}{\mathfrak{m}}
\newcommand{\fr}{\mathfrak{r}}

\newcommand{\fF}{\mathfrak{F}}

\newcommand{\fP}{\mathfrak{P}}

\newcommand{\bC}{\mathbf{C}}
\newcommand{\bF}{\mathbf{F}}

\newcommand{\SL}   {\mathbf{SL}}
\newcommand{\CM}   {\mathbf{CM}}
\newcommand{\Mon}  {\mathbf{Mon}}
\newcommand{\FGMon}{\mathbf{FGMon}}

\newcommand{\cA}{\mathcal A}
\newcommand{\cB}{\mathcal B}
\newcommand{\cC}{\mathcal C}
\newcommand{\cG}{\mathcal G}
\newcommand{\cE}{\mathcal E}
\newcommand{\cF}{\mathcal F}
\newcommand{\cH}{\mathcal H}

\newcommand{\cO}{\mathcal O}

\newcommand{\F}{\mathbb F}
\newcommand{\I}{\mathbb I}
\newcommand{\N}{\mathbb N}

\newcommand{\bH}{\mathbb H}

\newcommand{\sC}{\mathsf{C}}
\newcommand{\sD}{\mathsf{D}}

\DeclareMathOperator*{\Spec} {\mathsf{Spec}}

\DeclareMathOperator {\Hom}  {\mathsf{Hom}}

\DeclareMathOperator {\id}   {\mathsf{Id}}

\DeclareMathOperator*{\Aut}  {\mathsf{Aut}}
\DeclareMathOperator*{\Ab}   {\mathsf{Ab}}
\DeclareMathOperator*{\colim}{\mathsf{colim}}

\DeclareMathOperator*{\tl}   {2\text{-}\mathsf{lim}}

\DeclareMathOperator*{\To}   {\,\Rightarrow\,}
\DeclareMathOperator*{\coex} {\mathsf{Coext}}
\DeclareMathOperator*{\im}   {\mathop{\sf Im}\nolimits}

\DeclareMathOperator*{\CoKer}  {\mathop{\mathsf{CoKer}}\nolimits}

\renewcommand{\ker}{\mathsf{Ker}}

\newcommand{\xto}  [1]   {\xrightarrow{#1}}
\newcommand{\xfrom}[1]   {\xleftarrow{#1}}
\newcommand{\Msl}  [1][M]{#1^\mathsf{sl}}
\renewcommand{\sl}       {(-)^\mathsf{sl}}

\newcommand{\qtext}[1]{\quad\text{#1}\quad}

\newcommand{\pt}{\bullet}

\let\scong\cong
\renewcommand{\cong}{\;\scong\;}

\newcommand{\capcup}{%
    \mathbin{
      \ooalign{
        \(\displaystyle\cap\)\cr
        \hidewidth
        \hspace{0.7ex}%
        \raisebox{0.1ex}{\scalebox{0.5}{\(\cup\)}}
        \hidewidth\cr
    }
  }
}

\newtheorem{Core}{Definition}[section]
\newtheorem{Aux}{Remark}[section]

\newtheorem{De}[Core]{Definition}
\newtheorem{Th}[Core]{Theorem}
\newtheorem{Pro}[Core]{Proposition}
\newtheorem{Le}[Core]{Lemma}
\newtheorem{Cor}[Core]{Corollary}

\newtheorem{Rem}[Aux]{Remark}

\newtheorem{Const}[Aux]{Construction}
\newtheorem{Conv}[Aux]{Convention}
\newtheorem{Nota}[Aux]{Notation}

\author{Ilia Pirashvili}
\address{University of Galway, Áras De Brún, Gaillimh/Galway, H91 H3CY, Ireland}
\email{ilia\_p@ymail.com (personal)}
\email{ilia.pirashvili@universityofgalway.ie (uni)}

\keywords{Defomration theory, Monoid schemes, $\bF_1$-geometry, Grillet Cohomology,
Group coextensions of monoid schemes}
\subjclass[2020]{20M50, 20M14, 14A23}

\begin{document}
\title[Deformations of Monoid Schemes]{Deformation Theory of Monoid Schemes I}

\begin{abstract}
  The aim of this paper is to develop a deformation theory of monoid schemes,
  generalising the approach developed by Grillet \cite{gr}. The core idea of this
  approach is to introduce the notion of a system of abelian groups, as the naive
  approach to exactness does not work for monoids.

  We first study the case of monoid sheaves (functors over a poset into the category of
  monoids) and prove a classification theorem in this setting, showing that the
  coextensions \(\coex(\cF, \cA)\) of a monoid functor \(\cF\) with a system of abelian
  groups \(\cA\) is a symmetric categorical group and equivalent to the one obtained by
  the abelian group homomorphism \([\cC^0 \to \ker\partial^1]\), thereby linking with
  cohomology of certain types of complexes, as expected.

  We then move towards monoid schemes, which are a type of a monoid sheaf, but where
  localisations now allow us to develop our most noteworthy result: We show that
  coextensions can be seen in a natural way as a stack of symmetric categorical groups.
  We will mention a few mild implications of this, but leave the deeper uses of stack
  theory in this setting for later papers.
\end{abstract}

\maketitle

\section*{Introduction}

  Monoids are a versatile topic with many different viewpoints. They can be viewed from
  many angles, including as generalisations of groups, as one-object categories and as
  an analogue of commutative rings. The latter becoming especially prominent over the
  last 20 years under the name of \(\F_1\)-geometry, see for instance
  \cite{deitmar1,deitmar2,cc1,cc2,chu,cortinas}. It is also the one that perhaps needs
  most explaining to these unfamiliar with it: The idea is to regard monoids are rings
  without addition and proceed to do algebraic geometry as one would do over
  commutative rings. Using prime ideals and localisations, we can define gluing of
  monoids exactly like gluing of rings and thereby create objects we will refer to as
  monoid schemes.

  This paper has aspects from all of the above aspects of monoids, but its motivation
  comes mainly from the algebraic geometric view. The long-term goal is to develop a
  deformation theory of monoids and monoid schemes in the vein of
  Quillen~\cite{quillen} and Illusie~\cite{ill}. This will be addressed in our upcoming
  papers. The current one only deals with the low dimensional case.

  Much of our work in the affine case is known, but this paper offers perhaps a more
  modern description. The results for monoid schemes are new and at times, offer
  perhaps even a few surprises.

  \medskip

  In classical (ring-theoretic) algebraic geometry, deformation theory is one of the
  major tools used for its study. It controls the infinitesimal deformations of a
  scheme and gives a systematic way of studying ``closely related'' families of
  schemes. Two of its major architects are Quillen~\cite{quillen} in the affine case
  and Illusie~\cite{ill} in the general scheme setting.

  \medskip

  Our aim is to develop a natural analogue of this theory for monoid schemes. The
  affine analogue of a ring extension in the monoidal world is a \emph{group
  coextension}, a notion notion developed by Grillet~\cite{gr} in his study of the
  cohomology of commutative semigroups. A core issue of monoid theory is that kernels
  in the naive sense are not enough to capture monoid homomorphisms in a sufficient
  manner. There are numerous approaches to solve such problems, and one of the most
  interesting ones is to consider a system of abelian groups. This is constructed by
  regarding a monoid \(M\) as a category, whose objects are the elements of \(M\) and
  whose morphisms correspond to multiplication, denoted \(\bH(C)\) and consider
  functors from \(\bH\) to abelian groups. This is called a system of abelian groups
  over \(M\).

  Using this, we can define a coextension of a commutative monoid \(M\) by a system of
  abelian groups, in accordance to Grillet, as a surjective monoid homomorphism \(\pi:
  N \to M\) together with regular, coherent and compatible actions of the abelian
  groups \(\cA(m)\), for each fibre \(\pi^{-1}(m)\). Systems of abelian groups are
  equivalent to group objects in the slice category over a monoid \(M\). We also very
  quickly verify in Proposition~\ref{prop:monoig_coex_generalises_group_coex}, for the
  convenience of the reader, that this notion of coextension is indeed a generalisation
  of the classical coextension of groups.

  \medskip

  Analogous to the classical setting, Grillet proved that the isomorphism classes of
  coextensions of \(M\) by \(\cA\) are classified by the first cohomology group
  \(\sD^1(M, \cA)\) of a cochain complex
  \[
    \sC^0(M, \cA) \xto{\;\partial^0\;} \sC^1(M, \cA)
    \xto{\;\partial^1\;} \sC^2(M, \cA).
  \]
  We will call this the \emph{Grillet complex}. The group \(\sD^0(M, \cA) = \ker
  \partial^0\) is the group of derivations from \(M\) with values in \(\cA\).

  \medskip

  The main aim of this paper is to generalise this theory in a categorically rigorous
  manner to the general (non-affine) setting. In doing so, we will actually do two
  further interrelated things: The first will be to give a self-contained and
  categorically more modern description of the affine setting. This includes the Baer
  sum, the versal coextension, and the long exact sequence of Grillet cohomology.

  The second is to develop a theory of coextensions of monoid sheaves/functors, which
  is more general than the theory of coextensions of monoid schemes. However, when
  specialising to monoid schemes, we will show that localisations enable us to define
  additional structures on the coextensions. In particular, arguably our main result,
  will be to show that the coextensions, \(\coex(X, \cA)\), can be regarded in a
  natural way as a stack of symmetric categorical groups, whose global section exactly
  describes the coextensions of a monoid scheme \(X\) by a system of abelian groups
  \(\cA\).

  \medskip

  In more detail, our main results can be described as follows:

  \medskip

  Let \(M\) be a commutative monoid and \(\cA \in \cH(M)\) a system of abelian groups
  over \(M\). We show that the category \(\coex(M, \cA)\) of coextensions of \(M\) by
  \(\cA\) is a symmetric categorical group in a natural way: The monoidal structure
  (``addition'') is played by the Baer sum and the unit is the semi-direct product (see
  Section~\ref{sec:semidirect} for the definition of the semi-direct product with a
  system of abelian groups). We also show that this is equivalent to the symmetric
  categorical group associated to the group homomorphism \(C^0 \to \ker\partial^0\).
  That is to say, we have an equivalence
  \[
    \coex(M, \cA) \;\simeq\; \bigl[\sC^0(M, \cA) \xrightarrow{\;\partial^0\;}
    \ker\partial^1(M, \cA)\bigr].
  \]
  We refer to this as the (affine) classification theorem. The core mathematical
  argument of this can already be found implicitly in Grillet~\cite{gr}.

  In particular, we have
  \begin{eqnarray*}
    \pi_0(\coex(M,\cA)) & \cong & \sD^1(M, \cA) \\
    \pi_1(\coex(M,\cA)) & \cong & \sD^0(M, \cA).
  \end{eqnarray*}

  \medskip

  Our work in the affine case also includes developing the theory of versal
  coextensions. These are coextensions from which we have a morphisms of coextensions
  (meaning a tripple) to any other coextension of the same monoid by the same system of
  abelian groups.

  There are many reasons why these are important, but one such is that it allows us to
  write a certain type of exact sequence: Given a versal coextension \(0 \to \cB \to K
  \to M \to 0 \in \coex(M, \cA)\), we prove the existence of an exact sequence
  \[
    0 \to \sD^0(M,\cA) \to D(q) \to \Hom_{\cH(M)}(\cB, \cA)
    \to \sD^1(M,\cA) \to 0.
  \]
  The details can be found in Section~\ref{sec:versal}, but as a quick note we will say
  that \(D(q)\) is a group of derivations from \(K\) with values in \(\cA\) satisfying
  some compatibility condition. The main point is that it enables us to express
  \(\sD^1(M, \cA)\) in terms of simpler (more understandable) groups.

  We also provide a long exact sequence in the Grillet cohomology associated to a short
  exact sequence \(0 \to \cA' \to \cA \to \cA'' \to 0\), where \(\cA, \cA', \cA''\) are
  systems of abelian groups over the same monoid \(M\) (meaning, over \(\bH(M)\).

  \medskip

  After this, we move to the genuinely new setting, where the role of \(M\) will be
  played by functors \(\cF: P \to \Mon\) from a poset to the category of monoids. The
  reason why we restrict ourselves to poset rather than arbitrary categories is that we
  want to study the extension theory of monoid schemes. It is well known
  \cite{cortinas,p1,p2} that the underlying topological space of a monoid scheme is a
  poset topology, meanig a topology arising form a poset via ordering. Moreover,
  sheaves over such topologies are equivalent to mere functors over said posets. In
  particular, so are monoid schemes. Of course, the restriction functors must be
  localisations in that case, but no coherence condition is required when working over
  the underlying poset.

  As such, we will consider functors of a monoid scheme first, and then specialise to
  monoid schemes. We define what systems of abelian groups over a monoid functor
  \(\cF\) are and what a coextension is. These are natural generalisations of the
  affine case and, in particular, we have \(\cH(M) \simeq \cH(\Spec(M))\). We show that
  the structural and classification theorems of the affine case generalise fully for
  monoid functors. In particular, we show that \(\coex(\cF, \cA)\) is a symmetric
  categorical group and equivalent to the symmetric categorical group associated to the
  group homomorphism \([\delta: C_P^{0,0}(\cF, \cA) \to Z^1(\cF, \cA)]\). Here,
  \(C_P^{0,0}(\cF, \cA)\) and \(Z^1(\cF, \cA)\) are obtained from a truncated bicomplex
  \(C^{\bullet, *}_P(\cF, \cA)\) which we introduce and which generalises the Grillet
  complex in the affine case. We call this the global Grillet complex.

  \medskip

  We then specialise to the monoid scheme setting, where the additional structure of
  localisations, which we extend to localisations of systems, allows us to construct
  functors between coextensions. This is perhaps a little unexpected when coming from
  ring theory, but it does have very pleasant consequences. In particular, we are able
  to prove Theorem~\ref{thm:global-sections-2limit}, which states that that
  \[ x\mapsto \coex(\cO_{X,x}, \cA_x) \]
  is a stack of symmetric categorical groups. Recall that being a stack implies the
  equivalence of the following categories:
  \[
    \coex(X, \cA) \;\simeq\;
    2 \,\text{-}\, \lim_{x \in X}\, \coex(\cO_{X,x}, \cA_x).
  \]
  We will also consider the special case of semilattices in the last section.

  \bigskip

  The paper is organised as follows: Section~\ref{sec:spec} establishes the necessary
  preliminaries on monoid theory, including a quick summary of the key results on the
  spectrum, such as the reduction to semilattices from \cite{p1} and that sheaves over
  the Zariski topology is a functor over its underlying semilattice.

  Section~\ref{sec:hm} introduces the notion of systems of abelian groups over a
  monoid, denoted \(\cH(M)\), and deals with some of its basic properties, such as
  functoriality, localisation, and the semidirect product. We also show that they are
  equivalent to the category of abelian group objects in the slice category \(\CM/M\)
  over \(M\).

  We proceed with the theory of group coextension in Section~\ref{sec:coext}, where we
  define the category of coextensions, endow it with the Bear sum to make it into a
  symmetric categorical group, and define the notion of the localisation of
  coextensions. We also construct pullbacks and pushforwards and prove some important
  lemmas, such as the fact that monoids and their coextensions have isomorphic spectra,
  and the short five lemma.

  In Section~\ref{sec:grillet}, we move on towards the more systematic study of
  coextensions. We establishes the Grillet complex and prove the so called
  classification theorem. This theorem says that the symmetric categorical group of
  coextensions can be studied with the cohomology of the Grillet complex. We also
  define the versal extension and talk about its induced exact sequence, as well as the
  long exact sequence induced by short exact sequence of systems of abelian groups.

  We move towards the non-affine setting in Section~\ref{sec:global}. Here, we
  introduce the definition of coextensions of monoid functors and in particular monoid
  schemes. We show that in the latter case, localisations give us functors
  \(\coex(\cO_{X,x}, \cA_x)\to \coex(\cO_{X,y}, \cA_y)\) for \(y\leq x \in X\), which
  allows us to regard \(\coex(\cO_{X,x}, \cA_x)\) as a stack.

  Section~\ref{sec:classification_sheaves} deals with generalising the Grillet complex
  for sheaves of monoids over a poset. Among other things, we define \(Z^1(\cF, \cA)\)
  explicitly, which plays the role that \(\ker\partial^0\) in the affine setting. Using
  this, we prove the global classification theorem for sheaf coextensions.

  Section~\ref{sec:semilattice} finally deals with a special case, namely the
  coextensions of semilattices. We show that while \(\bH(M)\) is rather rich, the
  coextensions are trivial, both in the affine and non-affine setting, as might be
  expected. It serves as a little example at the end, but also deals with a relatively
  important class of monoids, as the spectrum of a monoid is a semilattice.

\section{Preliminaries on the geometry of monoids} \label{sec:spec}

  \begin{Conv}[Monoids]
    \emph{Monoid are assumed to be commutative} throughout, unless very explicitly
    stated otherwise. \emph{We do not assume the existence of an absorbing element}
    (zero in the multiplicative notation), nor is it respected by a monoid
    homomorphism, if it coincidentally exists.
  \end{Conv}

  We begin by recalling the main results of \cite{p1}, which deals with the prime
  ideals of a commutative monoid \(M\), and will play an important role throughout the
  paper.

\subsection{\(\Spec(M)\) and \(M^{sl}\)}

    Recall the following important definitions.

\subsubsection{Ideals}

      The ideal theory follows that of ring theory verbatim, with the exception that we
      do not have addition, so disregard that part in our definitions.

      A subset \(\fa\subseteq M\) of a monoid \(M\) is called an \emph{ideal} if for
      every \(a\in \fa\) and every \(m \in M\), \(am\in \fa\). To put it an other way,
      if \(\fa\) is a sub-\(M\)-set. Ideals are thus allowed to be \emph{empty}.

      An ideal \(\fa\) that is of the form
      \[ (a) \;=\; aM \;:=\; \{am\mid m\in M\} \]
      is called a \emph{principal} ideal generated by \(a\).

      An ideal \(\p\subseteq M\) is called \emph{prime} if \(1 \notin \p\) and \(ab\in
      \p\), \(b\notin \p\) implies \(a\in \p\).

      An ideal \(\fm\subseteq M\) is called \emph{maximal} if \(1 \notin \fm\) and for
      any chain of inclusions of ideals \(\fm \subsetneq \fa \subseteq M\), we have
      \(\fa = M\).

      It is not hard to see that every monoid \(M\) has a unique maximal ideal, namely
      \(M\setminus M^\times\), the set of all non-invertible elements. Indeed, a monoid
      is a group if and only if it has two ideal, being \((1)\) and \(\emptyset\). An
      other way of saying that is if \(M^\times \subseteq M\), the subgroup of
      invertible elements of \(M\), forms an ideal.

\subsubsection{\(\Spec(M)\)}

      The set of all prime ideals of a monoid is denoted by \(\Spec(M)\). It carries
      with it a natural topology, called the \emph{Zariski topology}, where open sets
      are generated by sets of the form
      \[ D(f) \;:=\; \{\p \in \Spec(M) \mid f \notin \p\}. \]

      Here is where monoid ideal-theory diverges from ring ideal-theory: The
      (set-)union of ideals is again an ideal. Moreover, unlike intersection, primeness
      is preserved under union. Meaning, \(\Spec(M)\) is a join-semilattice under
      union. The least element is the empty set and the set of of non-invertible
      elements (maximal ideal, being the union of all ideals not containing \(1\)) is
      the greatest element. As one might expect, it does have a dual operation. It is
      given by
      \[ \p\capcup \q \;:=\; \bigcup\limits_{\fr \subseteq \p \cap \q} \fr, \]
      that is, the union of all prime ideas contained in both \(\p\) and \(\q\). These
      operations are continuous in the Zariski topology, making \(\Spec(M)\) a
      topological monoid (or to be more precise, a topological semi-lattice).

\subsubsection{Semilattices and \(\Msl\)}

      It becomes evident that semilattices and idempotent monoids are an important part
      of studying monoid theory. For this, let us formally introduce them:

      A \emph{(join) semilattice} is a poset \(L\) with a least element such that every
      pair \(a, b \in L\) has a join (least upper bound) \(a \vee b\). There exists a
      dual notion of meet-semilattice, but as the theory is identical (just
      order-revered), we will not state them. Indeed:
      \begin{Conv}[Semilattice]
        By semilattice we will hence mean join-semilattice.
      \end{Conv}
      Semilattices and monoids are intimately linked: Every semilattice is a monoid
      with \(\vee\) being the operation and the least element as the unit. It is clear
      that \(a\vee a = a\). In turn, if \(m^2 = m\) holds for all \(m\in M\), we can
      define an ordering by \(m \leq n\) if and only if \(mn = n\). Under this, the
      unit becomes the least element. We can formalise this by saying that
      \begin{quote}
        The category of commutative monoids satisfying \(m^2 = m\) for all \(m\) is
        equivalent to the category \(\SL\) of semilattices.
      \end{quote}

      Let \(\I := \{0, 1\}\) be the multiplicative monoid of the field with 2 elements.
      It is naturally also a semilattice. Indeed, it is the free semilattice with one
      generator.

      We imbued \(\I\) with the topology that declares \{\(\emptyset\), \(\{1\}\),
      \(\I\)\} as open sets. One checks that \(\I \scong \Spec(\N)\) as topological
      monoids, where \(\N = \{1, t, t^2, \ldots\}\) is the free commutative monoid with
      one generator.\\[1pt]

      We have a covariant functor
      \[ \sl: \Mon \to \SL \]
      which assigns \(M/\sim\) to a monoid \(M\), where \(\sim\) is the smallest
      congruence \(\sim\) such that \(m \sim m^2\) for all \(m\). This is the universal
      semilattice quotient of \(M\) and we denote it's image by \(M^{sl}\). By Grillet
      \cite[Theorem~1.2, Ch.~III]{gr}, \(M^{sl} = M/{\sim}\) where \(a \sim b\) if and
      only if there exist \(m, n \geq 1\) and \(u, v \in M\) with \(a^m = ub\) and
      \(b^n = va\). Note that this is the exact relation we have for the basis of the
      Zariski topology, being equivalent to \(D(a) = D(b)\).\\[2pt]

      We list the following results, the proofs can be found in \cite{p1}.

      \begin{Le}[Reduction Lemma {\cite[Lemma 2.1]{p1}}] \label{lem:spec=hom}
        For any monoid \(M\) there are natural isomorphisms of topological monoids
        \[ \Spec(M) \cong \Spec(M^{sl}) \cong \Hom(M, \I), \]
        where \(M^{sl}:=M/m^2\sim m\) is the associated semilattice of \(M\). Both
        \(\Spec(M)\) and \(M^{sl}\) carry a natural topology induced by order, where
        set satisfying the property ``if \(x\in U\) and \(y\leq x\), then \(y\in U\)''
        are called open. Note, in \(M^{sl}\) we would have to take \(y\geq x\). The
        right-most side carries the subspace topology induced by the product topology
        on \(\prod_{m \in M} \I\).
      \end{Le}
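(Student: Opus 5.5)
The plan is to construct the isomorphism \(\Spec(M)\cong \Hom(M,\I)\) directly, and then obtain the middle term by applying this to \(M^{sl}\) together with the universal property of \(M^{sl}\). Topologies and the monoid structures (union of primes, resp.\ the operation \(\capcup\)) are checked at the end.

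\emph{Step 1: \(\Spec(M)\cong\Hom(M,\I)\) as sets.} To a prime \(\p\) we assign its indicator \(\chi_\p\colon M\to\I\), with \(\chi_\p(m)=1\) iff \(m\in\p\). Here \(\I=\{0,1\}\) is viewed as a semilattice with \(0\) as unit and \(1\vee x=1\), so that the ideal value is absorbing. We then check:
\begin{itemize}
  \item \(1\notin\p\) gives \(\chi_\p(1)=0\), the unit;
  \item \(\p\) being an ideal gives \(\chi_\p(ab)=1\) whenever \(a\in\p\) or \(b\in\p\);
  \item primeness gives the converse, namely \(ab\in\p\) forces \(a\in\p\) or \(b\in\p\).
\end{itemize}
Hence \(\chi_\p\) is a homomorphism. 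Conversely, for a homomorphism \(f\) the set \(f^{-1}(1)\) is a prime ideal, and the two assignments are mutually inverse.

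\emph{Step 2: the middle term.} Since \(x^2=x\) in \(\I\), every homomorphism \(M\to\I\) factors uniquely through the quotient \(M\to M^{sl}\). This gives a natural bijection \(\Hom(M,\I)\cong\Hom(M^{sl},\I)\), and Step 1 applied to \(M^{sl}\) yields \(\Spec(M^{sl})\cong\Hom(M^{sl},\I)\). Equivalently, primes of \(M^{sl}\) are exactly the images of primes of \(M\), because primes are saturated for \(\sim\): if \(a^m=ub\) with \(b\in\p\), then \(a\in\p\). Naturality in \(M\) is immediate, since everything is given by pullback of \(\chi\) along homomorphisms.

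\emph{Step 3: monoid structure and topology.} Under the bijection, union of primes corresponds to the pointwise operation \(\vee\) in \(\I\), i.e.\ \(\chi_{\p\cup\q}=\chi_\p\vee\chi_\q\). For the topology, the basic open \(D(f)=\{\p\mid f\notin\p\}\) corresponds to \(\{\chi\mid \chi(f)=0\}\). The point \(0\) is closed in \(\I\) (the nontrivial open set is \(\{1\}\)), so these are the basic closed subsets of the product topology. This is the delicate point: the topology on \(\I\) must be chosen so that \(\{0\}\) is open, or else the identification must be reversed via \(\chi_\p(m)=0\) iff \(m\in\p\). I will adopt whichever convention makes \(\Spec(\N)\cong\I\) consistent with the one stated just before the lemma. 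I expect this bookkeeping of conventions to be the main obstacle, since the argument itself is routine.

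\emph{Step 4: the order topology.} The preimage of an open subset of \(\I\) under the evaluation map \(\mathrm{ev}_f\) is \(D(f)\) or its complement. One then checks that, for \(\Spec(M)\), open sets are down-closed under inclusion. This holds because \(\p\in D(f)\) and \(\q\subseteq\p\) imply \(\q\in D(f)\). This matches the stated order topology on \(\Spec(M)\), with the dual (up-closed) convention on \(M^{sl}\) as indicated in the statement.
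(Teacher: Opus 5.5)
The paper gives no proof of this lemma (it defers to the cited source), so I am judging your argument on its own merits. Your Steps~1--2 are the standard route and are fine.

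\textbf{Step 3: the convention is forced, not a choice.} An isomorphism of monoids must send the unit \(\emptyset\) of \((\Spec(M),\cup)\) to the unit of \(\Hom(M,\I)\). In the paper, \(\I\) is the multiplicative monoid of \(\F_2\), with unit \(1\) and \(\{1\}\) open. So you must set \(\chi_\p(m)=0\iff m\in\p\), i.e.\ \(\p=\chi_\p^{-1}(0)\) is the pullback of the unique prime \(\{0\}\) of \(\I\). Then
\begin{itemize}
\item \(\chi_\emptyset\equiv 1\) and \(\chi_{\p\cup\q}=\chi_\p\chi_\q\);
\item \(D(f)=\mathrm{ev}_f^{-1}(\{1\})\), and these sets are exactly the nontrivial subbasic opens of the product topology.
\end{itemize}
Hence the Zariski topology \emph{is} the subspace topology, and no ``complements'' appear. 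As written, your Step~3 identifies the \(D(f)\) with closed sets, which is false for the paper's \(\I\). Continuity of the operation then follows from continuity of \(\I\times\I\to\I\). In Step~2, precomposition with the surjection \(M\to M^{sl}\) is a homeomorphism, because the coordinate at \([m]\) is the coordinate at \(m\).

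\textbf{Step 4: the genuine gap.} You show that Zariski-open sets are down-closed. But ``matches the order topology'' needs the converse: every down-closed set must be open, in particular every \(\{\q:\q\subseteq\p\}\).
\begin{itemize}
\item This holds when \(\Spec(M)\) is finite, e.g.\ \(M\) finitely generated. Choose \(f_\q\in\q\setminus\p\) for each of the finitely many \(\q\not\subseteq\p\). Primality and the ideal property then give \(D\bigl(\prod_\q f_\q\bigr)=\{\q':\q'\subseteq\p\}\).
\item It fails in general. Take the free commutative monoid on \(t_1,t_2,\dots\). The down-closed set \(\{\emptyset\}\) is not open: the \(D(f)\) form a basis, and each \(D(f)\) also contains the prime \((t_i)\) for any \(t_i\) not dividing \(f\).
\end{itemize}
So for arbitrary \(M\) the correct topology on \(\Spec(M)\) is the product (Zariski) one, which is what the filtered-colimit corollary uses. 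The order-topology description can only be proved under a finiteness hypothesis. Your Step~4 cannot establish it for all \(M\) as stated and should be restricted accordingly.
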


      The Reduction Lemma says that the study of \(\Spec(M)\) reduces entirely to the
      study of spectra of semilattices and that the spectrum of a monoid is a
      semilattice itself.

      In the finitely generated case, we also have the following:

      \begin{Le}
        Let \(M\) be a finitely generated monoid. There is an order-reversing
        isomorphism
        \[ \Spec(M)\cong M^{sl}. \]
      \end{Le}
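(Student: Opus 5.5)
By the Reduction Lemma (Lemma~\ref{lem:spec=hom}), \(\Spec(M) \cong \Spec(M^{sl})\), and \(M^{sl}\) is again finitely generated as a quotient of \(M\). So it suffices to treat the case where \(L = M\) is a finitely generated semilattice. I would first note that such an \(L\) is finite. If \(g_1,\dots,g_k\) generate \(L\), every element is a join \(\bigvee_{i\in S} g_i\) for some \(S \subseteq \{1,\dots,k\}\), because \(g_i^2 = g_i\) and the operation is commutative. Hence \(|L| \le 2^k\).

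Next I would describe the primes of a finite semilattice \(L\). Given a prime \(\p\), its complement \(F = L\setminus \p\) contains \(1\) and is closed under products, by primeness. It is also upward closed for the divisibility order: \(ab \in F\) forces \(a \in F\), since \(\p\) is an ideal. In the order \(m\le n \iff mn=n\), "\(n\) divides \(m\)" means \(n \le m\). So \(F\) is closed under joins and downward closed, and since \(L\) is finite, \(F\) has a greatest element \(a = \bigvee F\). Then \(F = \{x \mid x \le a\} = {\downarrow} a\). Conversely, for any \(a \in L\) I would check that \(\p_a := \{x \mid x \not\le a\}\) is a prime ideal. It is an ideal because \(x \not\le a\) implies \(xm \not\le a\), as \(x \le xm\). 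It misses \(1\) because \(1\) is the least element. It is prime because \(xy \le a\) if and only if both \(x \le a\) and \(y \le a\). This gives a bijection \(L \to \Spec(L)\), \(a \mapsto \p_a\). For the order, \(a \le b\) gives \({\downarrow}a \subseteq {\downarrow}b\), hence \(\p_b \subseteq \p_a\); so the bijection is order-reversing.

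Finally I would check compatibility with the topologies and the monoid structures, since the statement should hold as topological semilattices. Both topologies are the order (Alexandrov) topologies described in Lemma~\ref{lem:spec=hom}, with opposite conventions, so an order-reversing bijection is a homeomorphism. For the operations, \(\p_a \cup \p_b = \{x \mid x\not\le a \text{ or } x\not\le b\}\), which corresponds to the meet of \(a\) and \(b\), and dually \(\capcup\) corresponds to the join. In a finite semilattice with a least element, meets exist, so this is consistent with the lattice structures on both sides.

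The main obstacle is the second step: proving that every prime complement is principal of the form \({\downarrow}a\). This is exactly where finite generation, through finiteness of \(M^{sl}\), is used. For infinite semilattices the statement fails, because filters need not be principal.
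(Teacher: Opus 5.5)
Your proof is correct. The paper gives no proof of its own here (it states the lemma right after the Reduction Lemma and defers to \cite{p1}), and your argument is the standard one: reduce to the finite semilattice \(M^{sl}\), then identify prime complements with the principal down-sets \({\downarrow}a\), i.e.\ \(\p_a=\{x\mid x\not\le a\}\).

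Two small points are worth tightening. The converse \(\p_b\subseteq\p_a\Rightarrow a\le b\) (take complements) is what makes the bijection an order anti-isomorphism rather than merely order-reversing. The topological claim can be checked directly from \(D(a)=\{\p_b\mid a\le b\}=\{\q\in\Spec(M^{sl})\mid \q\subseteq\p_a\}\), rather than by appealing to the paper's loose description of the topology.
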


      We also have the following in the non-finitely generated case:

      \begin{Cor}[{\cite[Corollary 2.2]{p1}}]
        If \((M_j)_{j \in J}\) is a filtered direct system of monoids, then the
        canonical map
        \[
          \Spec\!\bigl(\colim_{j} M_j\bigr) \;\longrightarrow\; \lim_{j} \Spec(M_j)
        \]
        is a homeomorphism.
      \end{Cor}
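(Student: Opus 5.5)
The plan is to reduce everything to the Reduction Lemma (Lemma~\ref{lem:spec=hom}), which identifies \(\Spec(M)\) naturally, as a topological space, with \(\Hom(M,\I)\) carrying the subspace topology from the product \(\prod_{m\in M}\I\). Write \(M=\colim_j M_j\) with structure maps \(\iota_j\colon M_j\to M\). The canonical map in the statement sends \(\p\) to the family \((\iota_j^{-1}(\p))_j\). Under the Reduction Lemma this becomes the map
\[
  \Hom\bigl(\colim_j M_j,\I\bigr)\;\longrightarrow\;\lim_j \Hom(M_j,\I),
  \qquad \varphi\mapsto(\varphi\circ\iota_j)_j .
\]
Naturality of the Reduction Lemma guarantees that this square commutes.

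\emph{Bijectivity.} I will use the universal property of the colimit in \(\Mon\). A compatible family of homomorphisms \(M_j\to\I\) is exactly a cocone under the diagram with vertex \(\I\). Such a cocone therefore factors uniquely through \(\colim_j M_j\). Since this holds for any colimit, filteredness is not even needed for this step.

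\emph{Topology.} I will describe the topology on the limit as the subspace topology inside \(\prod_j \Hom(M_j,\I)\), with each factor carrying the product topology from \(\I^{M_j}\).

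Continuity of the map follows because each coordinate \(\varphi\mapsto\varphi(\iota_j(m))\) is a projection on \(\I^{M}\).

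For openness, I need to show that the subbasic open sets \(\{\varphi : \varphi(x)=1\}\), for \(x\in M\), are images of open sets of the limit. Here filteredness is used: for a filtered colimit of monoids, the underlying set of the colimit is the filtered colimit of the underlying sets. Hence every \(x\in M\) has the form \(\iota_j(m)\) for some \(j\) and some \(m\in M_j\). Then
\[
  \{\varphi:\varphi(x)=1\} \;=\; \{(\psi_k)_k : \psi_j(m)=1\},
\]
and the right-hand side is open in the limit. The sets of the form \(\{\varphi:\varphi(x)=0\}\) are not open in \(\I\)'s topology, so they need no treatment.

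The main obstacle is the step that uses filteredness: verifying that every element of the colimit comes from some stage, and that the forgetful functor \(\Mon\to\mathbf{Set}\) preserves filtered colimits. This is standard, because the monoid operations are finitary. I will simply check that the set-theoretic filtered colimit, with multiplication defined after passing to a common upper index, is a monoid satisfying the universal property. Once that is in place, the map is a continuous bijection that sends subbasic opens to opens, and hence a homeomorphism.
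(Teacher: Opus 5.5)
Your proposal is correct and takes the same approach as the paper: identify \(\Spec\) with \(\Hom(-,\I)\) via the Reduction Lemma, then use that \(\Hom(-,\I)\) turns colimits into limits, while spelling out the continuity and openness check on the subbasic sets \(D(x)\) that the paper's one-line proof leaves implicit. As a small aside, filteredness is not needed for openness either, since every element of an arbitrary colimit of commutative monoids is a finite product \(\iota_{j_1}(m_1)\cdots\iota_{j_k}(m_k)\) and \(D(xy)=D(x)\cap D(y)\).
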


      \begin{proof}
        Since \(\Hom(-, \I)\) converts colimits to limits, the result is immediate from
        Lemma~\ref{lem:spec=hom}.
      \end{proof}

\section{Systems of Abelian Groups over a Monoid} \label{sec:hm}

  We now introduce the algebraic structure that serves as the coefficient system for
  deformation theory of monoid schemes. The definitions in this section go back to
  Grillet \cite[p.~118]{gr}.

\subsection{The category \texorpdfstring{\(\bH(M)\)}{H(M)}}

    \begin{De}
      For a monoid \(M\), we define the category \(\bH(M)\) as follows:
      \begin{itemize}
        \item The \emph{objects} are the elements of \(M\).
        \item For \(a, b \in M\), a \emph{morphisms} \(a \to b\) is a pair \((r, a)\)
              with \(r \in M\) and \(b = ra\). We write this morphism as \(a \xto{r}
              ra\).
        \item The \emph{composite} of \(a \xto{r} ra\) and \(ra \xto{s} sra\) is
              \(a \xto{sr} sra\).
      \end{itemize}
    \end{De}

    Note that \(\bH(M)\) is a small category. The identity morphism at \(a\) is \(a
    \xto{1} a\). Composition is associative because it is the monoid structure.

    \begin{De}
      A \emph{system of abelian groups over \(M\)}, or an \emph{\(\bH(M)\)-module}, is
      a covariant functor \(\cA: \bH(M) \to \Ab\). Explicitly, \(\cA\) consists of
      \begin{itemize}
        \item an abelian group \(\cA(a)\) for each \(a \in M\),
        \item a group homomorphism \(r_* := \cA(r,a): \cA(a) \to \cA(ra)\) for each
              pair \(a, r \in M\),
      \end{itemize}
      such that the identities \(1_* = \id_{\cA(a)}\) and \((rs)_* = r_* \circ s_*\)
      (as maps \(\xymatrix{
 \cA(a) \ar@<0.3em>[r]^{(rs)_*} \ar@<-0.3em>[r]_{r_* \circ
      s_*}                                                      & \cA(rsa)
      }\) for all \(a, r, s \in M\)) hold.

      A \emph{morphism} \(\alpha: \cA \to \cB\) of systems of abelian groups over \(M\)
      is a natural transformation of functors \(\bH(M) \to \Ab\). Concretely,
      \(\alpha\) consists of group homomorphisms \(\alpha(a): \cA(a) \to \cB(a)\) for
      each \(a \in M\), such that for every \(r \in M\) the following square commutes:
      \[
        \xymatrix{
          \cA(a) \ar[r]^{r_*} \ar[d]_{\alpha(a)} & \cA(ra) \ar[d]^{\alpha(ra)} \\
          \cB(a) \ar[r]_{r_*}                    & \cB(ra)
        }
      \]
      We denote by \(\cH(M)\) the resulting category of systems of abelian groups over
      \(M\).
    \end{De}

    \begin{Rem}
      The category \(\cH(M)\) is abelian. It has limits and colimits and both are
      computed pointwise. In particular, a short exact sequence in \(\cH(M)\) is a
      sequence
      \[ 0 \to \cA \to \cB \to \cC \to 0 \]
      such that \(0 \to \cA(a) \to \cB(a) \to \cC(a) \to 0\) is exact for every \(a \in
      M\). We will talk more about it in Section~\ref{sec:long_exact}
    \end{Rem}

\subsection{Functoriality and pullback}

    \begin{De}
      A monoid homomorphism \(\varphi: M \to N\) induces a functor \(\bH(\varphi):
      \bH(M) \to \bH(N)\) in the obvious way (by sending \(a \xto{r} ra\) to
      \(\varphi(a) \xto{\varphi(r)} \varphi(ra)\)). Hence, it induces a \emph{pullback
      functor}
      \[
        \varphi^*: \cH(N) \to \cH(M), \qquad (\varphi^* \cA')(a) \;:=\;
        \cA'(\varphi(a)).
      \]
      The restriction map \((\varphi^* \cA')(r, a): \cA'(\varphi(a)) \to
      \cA'(\varphi(ra)) = \cA'(\varphi(r)\varphi(a))\) is the map \(\varphi(r)_*\) in
      \(\cA'\).
    \end{De}

\subsection{Localisation of systems of abelian groups}

    Let \(S \subseteq M\) be a submonoid, and denote by \(S^{-1}M\) the localisation of
    \(M\) at \(S\). Recall that elements of \(S^{-1}M\) are equivalence classes of
    symbols \(\tfrac{a}{s}\) with \(a \in M\) and \(s \in S\), where \(\tfrac{a_1}{s_1}
    = \tfrac{a_2}{s_2}\) in \(S^{-1}M\) if and only if there exists a \(u \in S\) such
    that \(a_1 s_2 u = a_2 s_1 u\) in \(M\). Multiplication is \(\tfrac{a_1}{s_1} \cdot
    \tfrac{a_2}{s_2} := \tfrac{a_1 a_2}{s_1 s_2}\). The canonical map \(i_S: M \to
    S^{-1}M\), \(a \mapsto \tfrac{a}{1}\), induces a pullback functor \(i_S^*:
    \cH(S^{-1}M) \to \cH(M)\).

\subsubsection{Localisation of a system of abelian groups}

      We now construct the left adjoint \(i^S_!\) of \(i_S^*\), which we call the
      \emph{localisation} of a system of abelian groups.

      For a system \(\cA \in \cH(M)\), we define
      \((S^{-1}\cA)\bigl(\tfrac{a}{s}\bigr)\) for \(\tfrac{a}{s} \in S^{-1}M\) as
      follows: The abelian group \((S^{-1}\cA)\bigl(\tfrac{a}{s}\bigr)\) consists of
      equivalence classes of pairs \((x, ss')\), with \(s' \in S\) and \(x \in
      \cA(as')\), where \((x, ss_1) \sim (x, ss_2)\) if there exist \(u\in S\) such
      that \((s_2'u)_*(x_1) = (s_1'u)_*(x_2)\). We can illustrate it in form of a
      diagram:
      \[
        \cA(ass_1') \xto{(ss_2'u)_*} \cA(ass_1's_2'u) \xfrom{(ss_1'u)_*} \cA(ass_2').
      \]
      The group operation is defined on representatives by
      \[
        \frac{x_1}{ss_1'} + \frac{x_2}{ss_2'}
        \;:=\;
        \frac{(ss_2')_*(x_1) + (ss_1')_*(x_2)}{ss_1' ss_2'},
      \]
      where both \((ss_2')_*(x_1)\) and \((ss_1')_*(x_2)\) land in the common group
      \(\cA(as_1's_2')\). One verifies in the standard way that this operation is
      well-defined on equivalence classes and gives
      \((S^{-1}\cA)\bigl(\tfrac{a}{s}\bigr)\) the structure of an abelian group, with
      zero element \(\tfrac{0}{1}\) and negatives \(-\tfrac{x}{ss'} =
      \tfrac{-x}{ss'}\).\\[1pt]

      For a morphism \(\tfrac{b}{t}: \tfrac{a}{s} \to \tfrac{ab}{st}\) in
      \(\bH(S^{-1}M)\), the induced homomorphism
      \[
        \left(\frac{b}{t}\right)_* :
        (S^{-1}\cA)\!\left(\frac{a}{s}\right) \longrightarrow
        (S^{-1}\cA)\!\left(\frac{ab}{st}\right)
      \]
      is defined on representatives by
      \[
        \left(\frac{b}{t}\right)_*\!\left(\frac{x}{ss'}\right)
        \;:=\;
        \frac{b_*(x)}{sts'},
      \]
      where \(b_*(x) \in \cA(abs')\) is the image of \(x \in \cA(as')\) under \(b_*:
      \cA(as') \to \cA(abs')\). One checks that this is compatible with the equivalence
      relation and satisfies the functoriality identities, so that \(S^{-1}\cA\) is
      indeed a system of abelian groups over \(S^{-1}M\).\\

      For a prime ideal, the complement \(S = M \setminus \p\) is a submonoid of \(M\).
      As such, we can localise with this submonoid. In this case, we use the special
      notation \(\cA_\p\) for \((M \setminus \p)^{-1}\cA\), and use the notation
      \(i_\p\), \(i_\p^*\), \(i_!^\p\) for the corresponding maps and functors.

      \begin{Rem} \label{rem:localisations_are_wtih_primes}
        Unlike for ring theory, in monoids, every localisation can be assumed to be
        done at a prime ideal. This can be done constructively. Indeed, consider the
        associated semilattice \(M^{sl}\) and consider all the classes \([m_1],\ldots,
        [m_k]\) in which \(S\) resides (has non-empty intersection). The collection of
        al such classes, and all the classes less-than-or-equal defines a submonoid,
        the complement of which is a prime ideal (such submonoids are called
        \emph{faces}). Localising at our constructed prime agrees with localising with
        \(S\).
      \end{Rem}

\subsection{The semidirect product} \label{sec:semidirect}

    We come now to an important construction, which is basically the trivial extension
    and will be our unit-extension, once we endow extensions with a symmetric
    categorical structure in~\ref{prop:coex_catgroup}.

    Let \(\CM/M\) denote the slice category over \(M\). Recall that its objects are
    monoid homomorphisms \(p: D \to M\) and morphisms are commuting triangles
    \[
      \xymatrix{
        D\ar[rr]^{f} \ar[dr]_{p} &    & D'\ar[dl]^{p'} \\
                                 & M. & 
      }
    \]
    Given \(\cA \in \cH(M)\), we may form the following important construction:

    \begin{De}
      The \emph{semidirect product} \(M \ltimes \cA\) is the monoid (see
      Lemma~\ref{lem:semidirect=monoid})
      \[ M \ltimes \cA \;:=\; \{(a, x) \mid a \in M,\; x \in \cA(a)\}, \]
      equipped with the binary operation
      \[ (a, x)(a', x') \;:=\; (aa',\; a_*(x') + a'_*(x)). \]
    \end{De}

    \begin{Le} \label{lem:semidirect=monoid}
      \(M \ltimes \cA\) is a commutative monoid.
    \end{Le}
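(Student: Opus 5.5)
We verify the monoid axioms directly from the definition, using only that \(\cA\) is a functor \(\bH(M)\to\Ab\), i.e.\ that each \(r_*\) is a group homomorphism and that \(1_*=\id\) and \((rs)_*=r_*\circ s_*\).

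First we check that the operation is well defined. For \((a,x),(a',x')\) with \(x\in\cA(a)\) and \(x'\in\cA(a')\), the morphism \(a'\xto{a}aa'\) gives \(a_*(x')\in\cA(aa')\), and the morphism \(a\xto{a'}a'a\) gives \(a'_*(x)\in\cA(a'a)\). Since \(M\) is commutative, \(aa'=a'a\), so both summands lie in the same group \(\cA(aa')\) and the sum makes sense. Commutativity of the operation is then immediate, because both \(aa'=a'a\) and addition in \(\cA(aa')\) are commutative.

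Next, the unit is \((1,0)\) with \(0\in\cA(1)\). Indeed,
\[
(1,0)(a,x)=(a,\,1_*(x)+a_*(0))=(a,x),
\]
since \(1_*=\id\) and \(a_*\) is a homomorphism, so \(a_*(0)=0\).

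For associativity we expand both bracketings. Using additivity of the maps together with \((rs)_*=r_*s_*\), we get
\[
\bigl((a,x)(b,y)\bigr)(c,z)=\bigl(abc,\;(ab)_*(z)+c_*a_*(y)+c_*b_*(x)\bigr)=\bigl(abc,\;(ab)_*z+(ca)_*y+(cb)_*x\bigr).
\]
The other bracketing gives
\[
(a,x)\bigl((b,y)(c,z)\bigr)=\bigl(abc,\;a_*b_*z+a_*c_*y+(bc)_*x\bigr)=\bigl(abc,\;(ab)_*z+(ac)_*y+(bc)_*x\bigr).
\]
These agree by commutativity of \(M\), and every term lies in \(\cA(abc)\).

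The only subtle point is bookkeeping. The symbol \(r_*\) depends on its source object, so we must check that each composite \(r_*s_*\) is taken along composable morphisms \(a\xto{s}sa\xto{r}rsa\) in \(\bH(M)\), where functoriality applies. Commutativity is used to identify the targets \(abc=bca=\dots\). Beyond this, everything is routine.
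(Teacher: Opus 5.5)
Your proposal is correct and follows the same route as the paper. Both arguments expand the two bracketings to \((abc,\,(bc)_*x+(ac)_*y+(ab)_*z)\) using \((rs)_*=r_*\circ s_*\), get commutativity from that of \(M\) and of each \(\cA(m)\), and check the unit \((1,0)\) via \(1_*=\id\) and \(a_*(0)=0\). Your check that both summands lie in \(\cA(aa')\), and your remark on composable morphisms in \(\bH(M)\), are details the paper leaves implicit.
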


    \begin{proof}
      Associativity follows from functoriality, that is \((rs)_* = r_* \circ s_*\), and
      the commutativity of addition. Specifically, one checks that both
      \(((a,x)(a',x'))(a'',x'')\) and \((a,x)((a',x')(a'',x''))\) equate to
      \[ \bigl(aa'a'',\; (a'a'')_*(x) + (aa'')_*(x') + (aa')_*(x'')\bigr). \]
      Since \(M\) is commutative and each \(\cA(a)\) is abelian, we have
      \[
        (a,x)(a',x') \;=\; (aa',\, a_*(x') + a'_*(x)) \;=\; (a'a,\, a'_*(x) + a_*(x'))
        \;=\; (a',x')(a,x).
      \]
      which shows that the semidirect product is commutative.

      The element \((1_M, 0_{1_M})\) acts as the identity, as using \(1_* = \id\) and
      \(a_*(0) = 0\), gives us
      \[ (1_M, 0)(a, x) \;=\; (a,\; 1_*(x) + a_*(0)) \;=\; (a, x). \qedhere \]
    \end{proof}

\subsection{The abelian group objects description}

    Recall that a \emph{group object} in a category \(\cC\) is an object \(G \in \cC\),
    together with morphisms \(\mu: G \times G \to G\) (multiplication), \(\nu: G \to
    G\) (inverse), and \(e: \mathbf{1} \to G\) (unit, where \(\mathbf{1}\) is the
    terminal object of \(\cC\)), satisfying the usual group axioms expressed via
    commutative diagrams. We say it is \emph{abelian} if it additionally satisfies the
    commutativity constraint \(\mu \circ \tau = \mu\), where \(\tau: G \times G \to G
    \times G\) is the swap.

    The terminal object in the slice category \(\CM/M\) is \(\id_M: M \to M\) and fibre
    products are taken over \(M\). An abelian group object \(p: D \to M\) thus carries
    a zero section \(0_D: M \to D\), a fibrewise addition \(+_D: D \times_M D \to D\),
    and a fibrewise negation, all compatible with \(p\).

    \begin{Pro}
      The category \(\cH(M)\) is equivalent to the category of abelian group objects in
      \(\CM/M\).
    \end{Pro}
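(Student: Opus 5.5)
We will construct functors in both directions and check that they are mutually quasi-inverse. In one direction, a system \(\cA \in \cH(M)\) is sent to the projection \(p: M \ltimes \cA \to M\), \((a,x) \mapsto a\), which is a monoid homomorphism by Lemma~\ref{lem:semidirect=monoid}. The abelian group object structure is as follows.
\begin{itemize}
  \item The zero section is \(a \mapsto (a,0)\).
  \item The addition \((a,x),(a,y) \mapsto (a,x+y)\) is defined on \((M\ltimes\cA)\times_M(M\ltimes\cA)\).
  \item The negation is \((a,x) \mapsto (a,-x)\).
\end{itemize}
The group axioms hold fibrewise, so the real content is that each of these maps is a monoid homomorphism. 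For the addition this amounts to
\[
  a_*(y+y') + a'_*(x+x') \;=\; \bigl(a_*(y)+a'_*(x)\bigr) + \bigl(a_*(y')+a'_*(x')\bigr),
\]
which is additivity of \(a_*, a'_*\) together with commutativity in \(\cA(aa')\). The zero section and negation are homomorphisms for the same reasons. A morphism \(\alpha: \cA \to \cB\) goes to \((a,x) \mapsto (a, \alpha(a)(x))\); naturality of \(\alpha\) makes this multiplicative, and it clearly respects the group structures.

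In the other direction, start from an abelian group object \(p: D \to M\) with structure maps \(0_D, +_D, -_D\). Set \(\cA(a) := p^{-1}(a)\), which is an abelian group under \(+_D\) with zero \(0_D(a)\). For \(r \in M\) define \(r_*: \cA(a) \to \cA(ra)\) by \(r_*(d) := 0_D(r)\cdot d\), using the multiplication of \(D\).
\begin{itemize}
  \item \(r_*\) is additive: \(+_D\) is a monoid map, so applying it to the pairs \((0_D(r),0_D(r))\) and \((d,d')\) in \(D\times_M D\) gives \(0_D(r)(d+d') = 0_D(r)d + 0_D(r)d'\), using \(0_D(r)+0_D(r)=0_D(r)\).
  \item Functoriality: \(1_* = \id\) because \(0_D(1)=1_D\), and \((rs)_* = r_*s_*\) because \(0_D\) is a monoid homomorphism.
\end{itemize}
A morphism of group objects restricts to fibrewise homomorphisms commuting with \(r_*\), since it preserves both the product and \(0_D\). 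This gives a functor to \(\cH(M)\).

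It remains to produce the two natural isomorphisms. Starting from \(\cA\), the fibre of \(M\ltimes\cA\) over \(a\) is \(\cA(a)\), and
\[
  (r,0)(a,x) \;=\; (ra,\, r_*(x) + a_*(0)) \;=\; (ra,\, r_*(x)),
\]
so we recover \(\cA\) on the nose.

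The main obstacle is the other composite: showing that \(\Phi: D \to M\ltimes\cA\), \(d \mapsto (p(d),d)\), is an isomorphism of monoids. It is evidently a bijection over \(M\), so everything reduces to the key identity
\[
  dd' \;=\; p(d)_*(d') + p(d')_*(d) \;=\; 0_D(p(d))\,d' + 0_D(p(d'))\,d \qquad \text{in } D.
\]
To prove it, apply the homomorphism \(+_D\) to the product of the pairs \((d, 0_D(p(d)))\) and \((0_D(p(d')), d')\) in \(D\times_M D\):
\begin{itemize}
  \item on the left, \(+_D\) of the two pairs is \(d + 0_D(p(d)) = d\) and \(0_D(p(d')) + d' = d'\), whose product is \(dd'\);
  \item on the right, the product of the pairs is \(\bigl(d\,0_D(p(d')),\; 0_D(p(d))\,d'\bigr)\), and applying \(+_D\) gives exactly the sum above.
\end{itemize}
This is an Eckmann--Hilton-style interchange argument. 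Naturality of \(\Phi\) in \(D\) is then straightforward, and it is clear that \(\Phi\) respects the group object structures.
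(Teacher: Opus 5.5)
Your proof is correct and follows the same route as the paper. You use the semidirect product \(M\ltimes\cA\) in one direction, the fibres \(p^{-1}(a)\) with \(r_*(d)=0_D(r)\cdot d\) in the other, and the comparison isomorphism between \(D\) and \(M\ltimes\cA\). Your Eckmann--Hilton argument, applying \(+_D\) to the pairs \((d,0_D(p(d)))\) and \((0_D(p(d')),d')\), justifies the identity \(dd' = 0_D(p(d))\,d' +_D 0_D(p(d'))\,d\), which the paper uses without proof in its final step. Likewise, your use of \(0_D(r)+_D 0_D(r)=0_D(r)\) makes the additivity of \(r_*\) more precise than the paper's version.
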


    \begin{proof}
      We proceed with the usual approach for proving something like this: We build
      functors on both sides and show that these are mutually inverse.

      \medskip
      Let \(\cA \in \cH(M)\) be an abelian system. The projection
      \[ p_\cA: M \ltimes \cA \to M, \qquad (a,x) \mapsto a, \]
      is an object of \(\CM/M\). We equip it with the structure of an abelian group
      object via
      \[
        \begin{array}{rl}
          0_\cA: M \to M \ltimes \cA ,                                        & \quad a \mapsto (a, 0_{a}), \\
          +_\cA: (M \ltimes \cA) \times_M (M \ltimes \cA) \to M \ltimes \cA , & \quad (a,x),(a,x') \mapsto (a, x + x').
        \end{array}
      \]
      The map \(0_\cA\) is a monoid homomorphism as
      \[
        0_\cA(a) \cdot 0_\cA(a') \;=\; (a,0)(a',0)
        \;=\; (aa', a_*(0) + a'_*(0)) \;=\; (aa', 0) \;=\; 0_\cA(aa').
      \]
      Likewise, the map \(+_\cA\) is a monoid homomorphism on the fibre product, with
      fibrewise commutativity and associativity inherited from \(\cA(a)\). The
      negatives are given by \((a,x) \mapsto (a,-x)\). These maps are all morphisms
      over \(p_\cA\), making it an abelian group object in \(\CM/M\).

      \medskip
      Conversely, let \(p: D \to M\) be an abelian group object in \(\CM/M\), with zero
      section \(0_D: M \to D\) and fibrewise addition \(+_D: D \times_M D \to D\). For
      each \(a \in M\), set \(\cA(a) := p^{-1}(a)\) and define the abelian group
      structures through \(+_D\). The role of the unit is played by \(0_D(a)\).

      The actions are defined as follows: For \(r \in M\) and \(x \in \cA(a)\), define
      \[ r_*(x) \;:=\; 0_D(r) \cdot x \;\in \cA(ra). \]
      To see that this makes sense, observe that \(p: D\to M\) is a monoid homomorphism
      and hence,
      \[ p(0_D(r) \cdot x) \;=\; p(0_D(r)) \cdot p(x) \;=\; r \cdot a \;=\; ra. \]
      It follows that \(0_D(r) \cdot x \in p^{-1}(ra) = \cA(ra)\). Note that \(p(x) =
      a\) by the definition of \(\cA(a)\) as \(p^{-1}(a)\).

      The functoriality identities hold as well. We have \(1_* = \id\) as \(0_D(1_M) =
      1_D\), and \((rs)_* = r_* \circ s_*\) since
      \[
        (rs)_*(x) \;=\; 0_D(rs) \cdot x \;=\; (0_D(r) \cdot 0_D(s)) \cdot x
        \;=\; 0_D(r) \cdot (0_D(s) \cdot x) \;=\; r_*(s_*(x)).
      \]
      Finally, \(r_*: \cA(a) \to \cA(ra)\) is a group homomorphism as the group object
      axioms in \(\CM/M\) require that \(+_D\) is a morphism of monoids over \(M\),
      which forces multiplication by any fixed element \(0_D(r) \in D\) to respect
      \(+_D\). Concretely,
      \[
        r_*(x +_D x') \;=\; 0_D(r) \cdot (x +_D x')
        \;=\; (0_D(r) \cdot x) +_D (0_D(r) \cdot x')
        \;=\; r_*(x) +_D r_*(x'),
      \]
      where the middle equality holds because \(+_D\) is a monoid morphism.

      \medskip
      To see that they are mutually inverse, take \(\cA \in \cH(M)\) and define the
      surjection \(p_\cA: M \ltimes \cA \to M\). We then form the abelian group object
      and take fibres \(p_\cA^{-1}(a) = \cA(a)\). This recovers the original \(r_*\)
      since
      \[
        0_{p_\cA}(r) \cdot (a,x) \;=\; (r, 0) \cdot (a, x) \;=\; (ra,\; r_*(x) +
        a_*(0)) \;=\; (ra,\, r_*(x)).
      \]

      On the other hand, let \(p: D \to M\) be a group object and form \(\cA(a) =
      p^{-1}(a)\). We construct an isomorphism
      \[ \phi: M \ltimes \cA \to D, \qquad (a, x) \mapsto x \]
      in \(\CM/M\) compatible with the abelian group object structure. The only part
      that needs any convincing is that its a homomorphism, but this can be verified by
      \[
        \phi((a,x)(a',x')) \;=\; a_*(x') +_D a'_*(x)
        \;=\; 0_D(a) \cdot x' +_D 0_D(a') \cdot x \;=\; \phi(a, x)\phi(a', x').
      \]
      Thus, the two constructions are mutually inverse, up to natural isomorphism, and
      we are done.
    \end{proof}

\section{The Category of Coextensions} \label{sec:coext}

\subsection{Definition of coextensions and basic properties}

    \begin{De}[{\cite[Section~V.3]{gr}}] \label{def:coext}
      Let \(M\) be a monoid and \(\cA\) a system of abelian groups over \(M\). A
      \emph{group coextension} of \(M\) by \(\cA\) is a surjective monoid homomorphism
      \(\pi: N \to M\), together with an action of \(\cA(m)\) on \(\pi^{-1}(m)\) for
      each \(m \in M\), satisfying the following two conditions:
      \begin{itemize}
        \item[(i)] For every \(n, n'\in \pi^{-1}(m)\), there exists a unique \(x \in
              \cA(m)\) such that \(n' = x \pt n\).
        \item[(ii)] For every \(x \in \cA(m)\), \(n\in \pi^{-1}(m)\), \(y \in
              \cA(m')\), \(n'\in\pi^{-1}(m')\), we have
              \[ (x \pt n)(y \pt n') \;=\; \bigl(m'_*(x) + m_*(y)\bigr) \pt (nn'). \]
      \end{itemize}
      The first property will be referred to as \emph{regularity} and the second one as
      \emph{Compatibility}.

      \noindent We denote such a coextension by \(0 \to \cA \to N \xto{\pi} M \to 0\).
    \end{De}

    \(\cA(m)\) acting on \(\pi^{-1}(m)\), of course, means \(0 \pt n = n\) and \((x +
    y) \pt n = x \pt (y \pt n)\) for all \(x, y \in \cA(m)\), \(n \in \pi^{-1}(m)\).

    It is not hard to see that the coextensions theory of monoids generalises the
    coextension theory of abelian groups, see
    Proposition~\ref{prop:monoig_coex_generalises_group_coex}.

    \begin{Le} \label{lem:exact_at_M}
      For all \(n \in N\), \(x \in \cA(\pi(n))\), we have \(\pi(x\bullet n) = \pi(n)\).
    \end{Le}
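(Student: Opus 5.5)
This lemma is essentially a matter of unwinding Definition~\ref{def:coext}, so I would argue directly from the definition. The only point to watch is the typing of the action. For each \(m \in M\), the group \(\cA(m)\) is declared to act on the fibre \(\pi^{-1}(m)\). The symbol \(x \pt n\) is therefore only defined when \(n \in \pi^{-1}(m)\) and \(x \in \cA(m)\), and in that case the action is a map \(\cA(m) \times \pi^{-1}(m) \to \pi^{-1}(m)\).

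Given \(n \in N\), set \(m := \pi(n)\). Then \(n \in \pi^{-1}(m)\) by definition, and the hypothesis \(x \in \cA(\pi(n)) = \cA(m)\) places \(x\) in exactly the group acting on that fibre. Hence \(x \pt n\) is defined and lies in \(\pi^{-1}(m)\), which means \(\pi(x \pt n) = m = \pi(n)\).

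I would also record a second route that derives the claim from the axioms, in case the action is read as a partially defined operation on all of \(N\). Regularity~(i) gives no information on its own, since it only concerns pairs already in the same fibre. Compatibility~(ii), however, does pin things down. Take \(n' = 1_N\), \(m' = 1_M\) and \(y = 0 \in \cA(1_M)\). Using that \(0 \pt 1_N = 1_N\) and that \(m_*(0) = 0\), condition~(ii) gives \((x \pt n) \cdot 1_N = (1_*(x) + 0) \pt n\), which is just \(x \pt n\) again. So this substitution is consistent but yields nothing new, and the content of the lemma still rests on the action being fibrewise.

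I expect no real obstacle here. The only subtlety is making explicit that the action of \(\cA(m)\) is on \(\pi^{-1}(m)\) and takes values in \(\pi^{-1}(m)\), as the definition states, rather than on \(N\). Once that is said, the claim is immediate; it amounts to saying that the sequence \(0 \to \cA \to N \to M \to 0\) is exact at \(M\), in the sense that the action does not move elements between fibres.
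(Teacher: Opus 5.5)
Your argument is correct and is the same as the paper's: set \(m=\pi(n)\), note that \(n\in\pi^{-1}(m)\) and that \(\cA(m)\) acts on this fibre, so \(x\pt n\in\pi^{-1}(m)\) and hence \(\pi(x\pt n)=\pi(n)\). The second route via compatibility adds nothing, as you yourself note, and can be dropped.
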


    \begin{proof}
      This is just the definition of a coextension if we take \(m = \pi(n)\), as
      \(\cA(\pi(n))\) acts on \(\pi^{-1}(\pi(n))\) and clearly \(n\in
      \pi^{-1}(\pi(n))\). Hence, \(x\bullet n\in \pi^{-1}(\pi(n))\).
    \end{proof}

    \begin{Rem}
      The notation \(0 \to \cA \to N \xto{\pi} M \to 0\) should be regarded with care:
      It does \emph{not} assert the existence of an injective map \(\cA \to N\), since
      \(\cA\) is not a monoid (or even set).
    \end{Rem}

    \begin{Le} \label{lem:inv_ext}
      Let \(0 \to \cA \to N \xto{\pi} M \to 0\) be a coextension.
      \begin{enumerate}
        \item An element \(n \in N\) is invertible in \(N\) if and only if \(\pi(n)\)
              is invertible in \(M\).
        \item The map \(f: \cA(1_M) \to N\), given by \(x \mapsto x \pt 1_N\), is a
              monoid homomorphism.
        \item We have a short exact sequence of abelian groups
              \[ 0 \to \cA(1_M) \xto{f} N^\times \xto{\pi} M^\times \to 0. \]
      \end{enumerate}
    \end{Le}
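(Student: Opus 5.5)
The plan is to derive all three parts directly from regularity (i) and compatibility (ii) in Definition~\ref{def:coext}, together with Lemma~\ref{lem:exact_at_M}. First I record one elementary consequence of compatibility. Since \(\pi\) is a surjective homomorphism, \(\pi(1_N)=1_M\). Taking \(x=0\) in (ii) shows that \(n(y\pt n') = m_*(y)\pt(nn')\) for \(n\in\pi^{-1}(m)\). In particular, with \(n=1_N\), \(m=1_M\) and \(1_*=\id\), this gives \(1_N(y\pt n')=y\pt n'\), which is consistent.

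Part (1). The forward direction is immediate, because \(\pi\) is a monoid homomorphism and therefore preserves inverses. For the converse, suppose \(\pi(n)=u\) is invertible with inverse \(v\). By surjectivity, choose \(n'\in\pi^{-1}(v)\). Then \(nn'\in\pi^{-1}(1_M)\). Regularity, applied to the fibre \(\pi^{-1}(1_M)\) which contains \(1_N\), gives a unique \(z\in\cA(1_M)\) with \(nn'=z\pt 1_N\). Next set \(n'' := (-v_*(z))\pt n'\), which lies in \(\pi^{-1}(v)\) because \(v_*(z)\in\cA(v)\). By (ii) with \(x=0\),
\[ nn'' \;=\; u_*(-v_*(z))\pt(nn') \;=\; (-(uv)_*(z))\pt(z\pt 1_N) \;=\; (-z+z)\pt 1_N \;=\; 1_N, \]
using \((uv)_*=1_*=\id\) and the action axioms. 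Since \(N\) is commutative, \(n''\) is a two-sided inverse of \(n\). Getting the correction term and its index \(u_*v_*=\id\) right is the only point that needs care, and I expect it to be the main (mild) obstacle.

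Part (2). First, \(f(0)=0\pt 1_N=1_N\). For multiplicativity, apply (ii) with \(m=m'=1_M\) and \(n=n'=1_N\):
\[ (x\pt 1_N)(y\pt 1_N)=(1_*x+1_*y)\pt 1_N=(x+y)\pt 1_N. \]
Hence \(f\) turns addition into multiplication, so it is a monoid homomorphism.

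Part (3). By Lemma~\ref{lem:exact_at_M}, \(\pi(f(x))=1_M\), which is a unit. So part (1) shows that \(f\) lands in \(N^\times\). Moreover \(\pi\circ f\) is trivial, so \(\pi\) restricts to a group homomorphism \(N^\times\to M^\times\).
\begin{itemize}
\item Injectivity of \(f\) follows from the uniqueness in regularity: if \(x\pt1_N=y\pt1_N\), then \(x=y\).
\item Exactness in the middle: if \(\pi(n)=1_M\), regularity gives \(n=x\pt 1_N=f(x)\) for some \(x\). Hence \(\ker\pi|_{N^\times}=\pi^{-1}(1_M)=\im f\).
\item Surjectivity onto \(M^\times\): any \(u\in M^\times\) has a preimage \(n\) by surjectivity of \(\pi\), and \(n\in N^\times\) by part (1).
\end{itemize}
Since \(f\) is a homomorphism into the group \(N^\times\), all the maps are group homomorphisms, and the sequence is exact.
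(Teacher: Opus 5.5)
Your proposal is correct and takes essentially the same route as the paper. Your correction element \((-v_*(z))\pt n'\) is exactly the paper's \((m^{-1})_*(-x)\pt n'\), and parts (2) and (3) use the same compatibility and regularity arguments; you are slightly more explicit than the paper in checking the forward direction of (1) and that \(f\) actually lands in \(N^\times\), both of which the paper leaves implicit.
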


    \begin{proof}
      (1) Suppose \(\pi(n) = m\) is invertible in \(M\). Since \(\pi\) is surjective,
      we can pick \(n' \in N\) with \(\pi(n') = m^{-1}\). Then \(\pi(nn') = 1_M =
      \pi(1_N)\) and by Definition~\ref{def:coext}(i) (regularity), there exists a
      unique \(x \in \cA(1_M)\) with \(nn' = x \pt 1_N\). Consider \((m^{-1})_*(-x) \pt
      n' \in \pi^{-1}(m^{-1})\). Definition~\ref{def:coext}(ii) (compatibility) now
      states
      \begin{align*}
        n \cdot \bigl((m^{-1})_*(-x) \pt n'\bigr)
                                                  & = (0 \pt n) \cdot \bigl((m^{-1})_*(-x) \pt n'\bigr)       \\
                                                  & = \bigl(m_*((m^{-1})_*(-x)) + m^{-1}_*(0)\bigr) \pt (nn') \\
                                                  & = (-x + 0) \pt (x \pt 1_N)                                \\
                                                  & = (-x + x) \pt 1_N = 1_N.
      \end{align*}
      So \(n\) is left-invertible. Since \(N\) is commutative, it is also
      right-invertible.

      (2) Setting \(n = n' = 1_N\), \(m = m' = 1_M\) in Definition~\ref{def:coext}(ii)
      and using \(1_* = \id\) we get,
      \[
        (x \pt 1_N)(y \pt 1_N) \;=\; (1_*(x) + 1_*(y)) \pt 1_N \;=\; (x + y) \pt 1_N.
      \]
      In other words, \(f(x)f(y) = f(x+y)\), and \(f(0) = 0 \pt 1_N = 1_N\), so \(f\)
      is a monoid homomorphism.

      (3) To see the exactness at \(\cA(1_M)\), observe that if \(x \pt 1_N = 1_N = 0
      \pt 1_N\), then \(x = 0\) by the uniqueness in Definition~\ref{def:coext}(i), so
      \(f\) is injective.

      Exactness at \(N^\times\) follows by (1). We have \(\ker(\pi|_{N^\times}) =
      \pi^{-1}(1_M) \cap N^\times\) and thus, every element of \(\pi^{-1}(1_M)\) is of
      the form \(x \pt 1_N = f(x)\), so \(\ker(\pi|_{N^\times}) =
      \operatorname{im}(f)\).

      Exactness at \(M^\times\) follows from the surjectivity of \(\pi\) together with
      (1).
    \end{proof}

    A groupoid is called \emph{simply connected} if every pair of objects is connected
    by exactly one isomorphism. Any such groupoid is equivalent to the trivial
    groupoid, meaning the groupoid with a single object and single isomorphism.

    \begin{Pro} \label{prop:monoig_coex_generalises_group_coex}
      Let \(G\) be an abelian group.
      \begin{enumerate}
        \item The category \(\bH(G)\) is a simply connected groupoid.
        \item The functor \(\cH(G) \to \Ab\), induced by evaluating at \(e_G\), is an
              equivalence of categories. In particular, any system of abelian groups on
              \(G\) is constant.
        \item If \(0 \to \cA \to N \xto{\pi} G \to 0\) is a coextension, \(N\) is
              a group and \(0 \to \cA(e_G) \to N \xto{\pi} M \to 0\) is a short exact
              sequence of abelian groups, and visa verse.
      \end{enumerate}
    \end{Pro}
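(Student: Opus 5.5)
For (1), I will show directly that \(\bH(G)\) is a simply connected groupoid. A morphism \(a \xto{r} ra\) is determined by its source \(a\) and the element \(r\). Given objects \(a, b\), the condition \(b = ra\) forces \(r = ba^{-1}\), so there is exactly one morphism \(a \to b\). This morphism is invertible, with inverse \(b \xto{ab^{-1}} a\), because composites multiply the labels and \(ab^{-1}\cdot ba^{-1} = e_G\). The identities are the morphisms labelled \(e_G\).

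For (2), I will use that a simply connected groupoid is equivalent to the trivial groupoid \(\pt\) via the inclusion of \(e_G\). Precomposition with an equivalence induces an equivalence of functor categories, so \(\cH(G) = \mathrm{Fun}(\bH(G), \Ab) \simeq \mathrm{Fun}(\pt, \Ab) = \Ab\), and this equivalence is exactly evaluation at \(e_G\). I will also make it explicit. A quasi-inverse sends \(A\) to the constant system with all maps the identity. Conversely, for \(\cA\), the maps \((a)_*: \cA(e_G) \to \cA(a)\) are isomorphisms, with inverse \((a^{-1})_*\), by functoriality. They give a natural isomorphism from the constant system on \(\cA(e_G)\) to \(\cA\); naturality is just \((ra)_* = r_* a_*\). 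This is the precise sense of ``constant''.

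For (3), the forward direction runs as follows. Every element of \(G\) is invertible, so Lemma~\ref{lem:inv_ext}(1) shows that every \(n \in N\) is invertible, i.e.\ \(N = N^\times\) is an abelian group. Lemma~\ref{lem:inv_ext}(3), with \(M^\times = G\), then gives the short exact sequence \(0 \to \cA(e_G) \to N \to G \to 0\).

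The converse needs some care. Start with a short exact sequence \(0 \to A \xto{i} N \xto{\pi} G \to 0\), and let \(\cA\) be the constant system from (2). Let \(A\) act on each fibre by \(x \pt n := i(x)n\). Regularity holds because two elements of the same fibre differ by a unique element of \(\ker \pi = i(A)\), and \(i\) is injective. Compatibility reads \((i(x)n)(i(y)n') = i(x+y)nn'\), which is immediate since all maps \(m_*\) are identities. For a general \(\cA\) we transport along the isomorphisms of (2), using \(x \pt n := i((m^{-1})_* x)\,n\) on \(\pi^{-1}(m)\). Checking compatibility there reduces, via functoriality, to \((mm')^{-1}_* m'_* = m^{-1}_*\).

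This bookkeeping of the transport isomorphisms is the only step I expect to need any attention; everything else is formal. The statement's ``\(M\)'' in (3) is read as \(G\).
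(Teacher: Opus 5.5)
Your proposal is correct and follows the paper's argument: (1) via the unique solvability of \(b = ra\) in \(G\), (2) via the equivalence of a simply connected groupoid with the trivial one, and the forward half of (3) via Lemma~\ref{lem:inv_ext}. The paper gives no argument for the converse of (3); your explicit action \(x \pt n := i\bigl((m^{-1})_* x\bigr)\,n\), whose compatibility reduces to \(((mm')^{-1})_*\, m'_* = (m^{-1})_*\), correctly supplies that missing step.
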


    \begin{proof}
      Assertion (1) follows since for any \(a.b\in G\), there exist a unique \(c\in G\)
      with \(b=ac\). For (2), recall that a simply connected groupoid is equivalent to
      a trivial groupoid, hence the result. The first statement of 3) follows directly
      from Lemma~\ref{lem:inv_ext}.
    \end{proof}

    \noindent It follows that Definition~\ref{def:coext} agrees with the classical
    definition of coextensions.

\subsection{The semilattice quotient of a coextension}

    \begin{Le}
      \label{lem:sl_coext} Let \(0 \to \cA \to N \xto{\pi} M \to 0\) be a coextension.
      The induced map
      \[ N^{sl} \longrightarrow M^{sl} \]
      is an isomorphism of semilattices.
    \end{Le}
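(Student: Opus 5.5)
The map \(N^{sl}\to M^{sl}\) is induced by \(\pi\), since \(\sl\) is a functor. Surjectivity of \(\pi\) gives surjectivity of \(N^{sl}\to M^{sl}\) at once. It remains to prove injectivity. By Grillet's description of \(\sim\) recalled above, this amounts to the following claim: if \(n_1,n_2\in N\) satisfy \(\pi(n_1)\sim\pi(n_2)\) in \(M\), then \(n_1\sim n_2\) in \(N\). Since \(\sim\) is symmetric, it suffices to show one direction. Namely, if \(\pi(n_1)^k = u\,\pi(n_2)\) for some \(k\ge 1\) and \(u\in M\), then some power of \(n_1\) lies in \(n_2N\).

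\textbf{Step 1: lift the relation up to a group element.} Pick a lift \(\tilde u\in N\) of \(u\). Then \(n_1^k\) and \(\tilde u n_2\) lie in the same fibre \(\pi^{-1}(m)\), where \(m=\pi(n_1)^k\). By regularity (Definition~\ref{def:coext}(i)) there is a unique \(x\in\cA(m)\) with
\[ n_1^k = x\pt(\tilde u n_2). \]

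\textbf{Step 2: absorb the twist into the multiplier.} We want to write \(x\pt(\tilde u n_2)\) as \(u'n_2\) for some \(u'\in N\). Compatibility (Definition~\ref{def:coext}(ii)) gives, for \(y\in\cA(u)\),
\[ (y\pt\tilde u)\,n_2 = (y\pt\tilde u)(0\pt n_2) = \pi(n_2)_*(y)\pt(\tilde u n_2). \]
So we need \(x\) to lie in the image of \(\pi(n_2)_*:\cA(u)\to\cA(u\pi(n_2))\). This map need not be surjective, and this is the main obstacle.

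\textbf{Step 3: pass to a higher power.} The way around the obstacle is to replace \(k\) by \(2k\), absorbing the twist into the extra factor \(n_1^k\). By compatibility,
\[ n_1^{2k} = n_1^k\cdot\bigl(x\pt(\tilde u n_2)\bigr) = \bigl(m_*(x)+ \pi(\tilde u n_2)_*(0)\bigr)\pt(n_1^k\tilde u n_2) = m_*(x)\pt(n_1^k\tilde u\, n_2). \]
Now set \(y := \pi(n_1)^k{}_*(x)\in\cA(mu)\), which is exactly \(m_*(x)\) regarded as coming from the fibre over \(mu\). Let \(v := y\pt(n_1^k\tilde u)\in\pi^{-1}(mu)\). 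Applying compatibility to the product \(v\cdot n_2\) gives
\[ v\,n_2 = \pi(n_2)_*(y)\pt(n_1^k\tilde u n_2). \]
This is not literally \(m_*(x)\): the first is \(\pi(n_2)_*m_*(x)\), while the second is \(m_*(x)\) pushed along \(\pi(n_1)^k\). The bookkeeping needs care, and this is the step where I expect the real work.

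A cleaner route is to write \(x\) through a target-side element directly. Every element of the fibre over \(m\cdot m\) has the form \(z\pt(n_1^k n_1^k)\). Moreover, \(n_1^k\cdot\bigl(w\pt(\tilde u n_2)\bigr)=m_*(w)\pt(\dots)\) shows that multiplying by \(n_1^k\) only ever twists by elements in the image of \(m_*\). So we take the product with an appropriate twisted lift, choosing \(z\pt n_1^k\) as the multiplier with \(z\in\cA(m)\) chosen so that the twists cancel. The decisive identity to verify is that \(m_*(x)\) lies in the image of \(\bigl(\pi(n_1)^k u\bigr)_*\) applied from \(\cA(\pi(n_2))\). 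This follows because \(m_*(x)=(u\pi(n_2))_*(x)\) factors through \(\cA(m\,u)\) via \(u_*\) and then \(\pi(n_2)_*\). Hence \(n_1^{2k}\in n_2N\), which by Grillet's criterion and symmetry gives \(n_1\sim n_2\) in \(N\). This proves injectivity, and the map \(N^{sl}\to M^{sl}\) is an isomorphism of semilattices.
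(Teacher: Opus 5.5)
Your argument is correct in substance, but Step 3 needs one correction, and your route differs from the paper's. As written, the choice \(y := \pi(n_1)^k{}_*(x)\) is ill-typed: that element lies in \(\cA(m^2)\), not in \(\cA(mu)\). The ``decisive identity'' in your last paragraph also names the wrong map. What you need is that \(m_*(x)\) lies in the image of \(\pi(n_2)_*:\cA(mu)\to\cA(m^2)\), not in the image of a map out of \(\cA(\pi(n_2))\). The factorisation you end with is exactly the fix. Since \(m=u\,\pi(n_2)\), take \(y:=u_*(x)\in\cA(mu)\); then your two compatibility computations combine to
\[
  n_1^{2k} \;=\; m_*(x)\pt(n_1^k\tilde u\, n_2) \;=\; \pi(n_2)_*\bigl(u_*(x)\bigr)\pt(n_1^k\tilde u\, n_2) \;=\; \bigl(u_*(x)\pt(n_1^k\tilde u)\bigr)\, n_2 ,
\]
so \(n_1^{2k}\in n_2N\). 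The detour through a multiplier \(z\pt n_1^k\) can be deleted.

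The paper never uses the explicit description of \(M^{sl}\). It argues via the universal property: any homomorphism \(h:N\to L\) into a semilattice is constant on each fibre of \(\pi\). This follows from the two identities \((x\pt n)((-x)\pt n)=n^2\) and \((x\pt n)^2=n\,(2x\pt n)\). Working fibre by fibre suffices there, because a homomorphism that is constant on the fibres of the surjection \(\pi\) factors through \(M\).

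Your approach instead relies on the Tamura--Grillet characterisation of \(\sim\). In exchange it proves a stronger, quantitative lifting statement: \(\pi(n_1)^k\in\pi(n_2)M\) implies \(n_1^{2k}\in n_2N\) for arbitrary \(n_1,n_2\), not only for elements of a common fibre. Specialising to \(u=1\), \(k=1\), \(\tilde u=1_N\), your construction gives \(u_*(x)\pt n_1=2x\pt n_2\). This recovers precisely the paper's identity \((x\pt n_2)^2=n_2\,(2x\pt n_2)\). So the paper's argument is shorter and self-contained, while yours gives explicit control of divisibility relations across \(\pi\), at the cost of an external theorem and more bookkeeping.
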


    \begin{proof}
      It suffices to show that any monoid homomorphism \(h: N \to L\), where \(L\)
      satisfies \(a^2 = a\) for all \(a\), factors through \(\pi: N \to M\). For this,
      it suffices to show by Lemma~\ref{lem:exact_at_M} that \(h(x \pt n) = h(n)\) for
      all \(n \in N\) and \(x \in \cA(\pi(n))\).

      \textbf{Step 1: \(h(n) \leq h(x \pt n)\).} This follows from Definition
      ~\ref{def:coext}(ii) with \(n = n'\) and using commutativity, as we have
      \[
        (x \pt n)((-x) \pt n) \;=\; \bigl(\pi(n)_*(x) + \pi(n)_*(-x)\bigr) \pt n^2
        \;=\; 0 \pt n^2 \;=\; n^2.
      \]
      Since \(h\) is a homomorphism and every element of \(L\) is an idempotent, we
      deduce
      \[ h(x \pt n) \cdot h((-x) \pt n) \;=\; h(n)^2 \;=\; h(n), \]
      so \(h(n) \leq h(x \pt n)\) in the semilattice order.

      \textbf{Step 2: \(h(x \pt n) \leq h(n)\).} Let both factors equal to \(x \pt n\)
      in Definition~\ref{def:coext}(ii) to obtain
      \[ (x \pt n)^2 \;=\; \bigl(2\pi(n)_*(x)\bigr) \pt n^2. \]
      On the other hand
      \[ n(2x\pt n) \;=\; (0\pt n)(2x\pt n) \;=\; (\pi(n)_*(2x)\pt n^2 \]
      It follows that
      \[ (x \pt n)^2 =n(2x\pt n) \]
      Acting with \(h\) now gives us
      \[ h(x \pt n)=h(x \pt n)^2=h(n)h(2x\pt n)) \]
      A similar argument to Step 1 gives us
      \[ h(x \pt n)\leq h(n) \]
      Combining Steps 1 and 2 gives the desired result.
    \end{proof}

    As a direct corollary of Lemma~\ref{lem:spec=hom} and Lemma~\ref{lem:sl_coext}, we
    obtain the following:

    \begin{Cor} \label{cor:spec_coext}
      If \(0 \to \cA \to N \xto{\pi} M \to 0\) is a coextension, then
      \[ \pi^{-1}: \Spec(M) \xrightarrow{\;\sim\;} \Spec(N) \]
      is a homeomorphism of topological spaces.
    \end{Cor}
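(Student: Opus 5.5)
The plan is to deduce Corollary~\ref{cor:spec_coext} directly from the Reduction Lemma~\ref{lem:spec=hom}, using the natural identification \(\Spec(M) \cong \Hom(M, \I)\) (prime \(\p\) corresponds to the homomorphism sending \(\p\) to \(0\) and its complement to \(1\)). Under this identification, the map \(\pi^{-1}: \Spec(M) \to \Spec(N)\) corresponds to precomposition \(\pi^*: \Hom(M,\I) \to \Hom(N,\I)\), \(\chi \mapsto \chi\circ\pi\). This holds because \((\chi\circ\pi)^{-1}(0) = \pi^{-1}(\chi^{-1}(0))\). I would check this compatibility first, since it makes the statement a purely formal consequence.

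Next, since \(\I\) is a semilattice, every homomorphism \(N \to \I\) factors uniquely through \(N \to N^{sl}\), and likewise for \(M\). So \(\pi^*\) is identified with \((\pi^{sl})^*: \Hom(M^{sl},\I) \to \Hom(N^{sl},\I)\). By Lemma~\ref{lem:sl_coext}, \(\pi^{sl}\) is an isomorphism of semilattices, hence \((\pi^{sl})^*\) is a bijection. Alternatively, the proof of Lemma~\ref{lem:sl_coext} shows directly that any \(h: N \to \I\) factors through \(\pi\); uniqueness of the factorisation follows from surjectivity of \(\pi\). Therefore \(\pi^{-1}\) is a bijection of sets.

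For the topology, the Reduction Lemma says the isomorphisms are homeomorphisms when \(\Hom(-,\I)\) carries the subspace topology from the product \(\prod \I\). Precomposition with a monoid map is continuous in these product topologies, since each coordinate \(\chi \mapsto \chi(\pi(n))\) is a projection. Thus \(\pi^*\) is continuous. Its inverse is \(((\pi^{sl})^{-1})^*\), read through the factorisation through the semilattices, and is continuous by the same argument applied to \(\Hom(M^{sl},\I)\) and \(\Hom(N^{sl},\I)\). More concretely, I would check that \(\pi^{-1}\) maps \(D(m)\) onto \(D(n)\) for any \(n\) with \(\pi(n)=m\). Since \(\pi\) is surjective, this shows basic opens correspond to basic opens.

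The only point needing care, which I expect to be the main (minor) obstacle, is ensuring that \(\pi^{-1}(\p)\) really is prime and that the bijection is literally \(\pi^{-1}\) rather than merely abstract. Both follow from the compatibility in the first paragraph together with \(\pi(1_N)=1_M\).
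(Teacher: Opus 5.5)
Your proposal is correct and follows the paper's own route. The paper states the corollary as a direct consequence of the Reduction Lemma~\ref{lem:spec=hom} (\(\Spec(-)\cong\Spec((-)^{sl})\cong\Hom(-,\I)\)) together with Lemma~\ref{lem:sl_coext}, and you supply the details it leaves implicit: that \(\pi^{-1}\) is precomposition with \(\pi\), and that \(D(m)\) is carried onto \(D(n)\) for any lift \(n\) of \(m\).
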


    Thanks to Corollary~\ref{cor:spec_coext}, we can and will identify \(\Spec(N)\)
    with \(\Spec(M)\) via \(\pi^{-1}\).

    \begin{Le} \label{lem:localisation_coext}
      Let \(0 \to \cA \to N \xto{\pi} M \to 0\) be a coextension. For any prime ideal
      \(\p \in \Spec(M)\), the localisation at \(M \setminus \p\) yields an induced
      coextension of monoids
      \[ 0 \to \cA_\p \to N_\p \to M_\p \to 0. \]
    \end{Le}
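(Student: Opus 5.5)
Write \(S = M\setminus\p\) and \(T = \pi^{-1}(S) = N\setminus \pi^{-1}(\p)\). By Corollary~\ref{cor:spec_coext}, \(T\) is the complement of the corresponding prime of \(N\), so \(N_\p := T^{-1}N\). The plan is to define \(\pi_\p: T^{-1}N \to S^{-1}M\) by \(\frac{n}{t}\mapsto\frac{\pi(n)}{\pi(t)}\), to define an action of \(\cA_\p\) on its fibres, and then to verify regularity and compatibility directly from Definition~\ref{def:coext}.

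\textbf{Step 1: the map.} The map \(\pi_\p\) is well defined, because \(\pi\) is a homomorphism and \(\pi(T)\subseteq S\). It is surjective because \(\pi\) is surjective: given \(\frac{a}{s}\), choose \(n\in\pi^{-1}(a)\) and \(t\in\pi^{-1}(s)\); then \(t\in T\).

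\textbf{Step 2: the action.} Take an element of \(\cA_\p(\frac{a}{s})\) represented by \(\frac{x}{ss'}\) with \(x\in\cA(as')\), and a point \(\frac{n}{t}\) in the fibre over \(\frac{a}{s}\). Since \(\pi_\p(\frac{n}{t}) = \frac{a}{s}\), there is some \(u\in S\) with \(\pi(n)su = a\pi(t)u\). The idea is to transport \(x\) and \(n\) into a common fibre \(\pi^{-1}(b)\) and define
\[
  \frac{x}{ss'}\pt\frac{n}{t} \;:=\; \frac{\bigl(c_*(x)\bigr)\pt(n\tilde t)}{t\tilde t}
\]
for suitable \(\tilde t\in T\) and \(c\in M\). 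The condition is that \(c\,as' = \pi(n\tilde t) = b\), and we choose \(\tilde t\) from \(\pi^{-1}\) of products of \(s,s',u\) and \(\pi(t)\) so that this holds.

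\textbf{Step 3: verifying the axioms.} Most of the work is checking that this definition does not depend on the representatives \(x, s'\), on \(n,t\), or on \(u\) and \(\tilde t\). The tool for this is the identity
\[
  t'\cdot(y\pt n) \;=\; \pi(t')_*(y)\pt(t'n),
\]
which follows from compatibility by writing \(t' = 0\pt t'\). It lets us push every choice forward along further elements of \(T\) until two choices agree, and this is exactly what the equivalence relations in both \(T^{-1}N\) and \(S^{-1}\cA\) permit.

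Once well-definedness is settled, the remaining properties follow from the corresponding properties of the original coextension:
\begin{itemize}
  \item the action axioms \(0\pt\) and \((x+y)\pt\), and compatibility (ii), hold because the formulas in \(S^{-1}\cA\) and in \(T^{-1}N\) are built from \(\cA\), \(N\) and these pushforwards;
  \item regularity (i) is the one genuinely new check.
\end{itemize}

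\textbf{Step 4: regularity.} For existence, take \(\frac{n}{t},\frac{n'}{t'}\) in the same fibre and bring them to the common denominator \(tt'\). After multiplying by some \(v\in T\) we get \(\pi(nt'v)=\pi(n'tv)\). Regularity of \(N\) then gives \(n'tv = y\pt nt'v\) for a unique \(y\), and \(\frac{y}{\ldots}\) with the appropriate denominator is the required element. For uniqueness, suppose \(\xi\pt\frac nt=\xi'\pt\frac nt\). This means that after multiplying by some \(w\in T\) the two numerators agree in \(N\). Regularity of \(N\) then gives equality of the pushed-forward elements of \(\cA\), and that is precisely the relation \(\xi=\xi'\) in \(S^{-1}\cA\).

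I expect the main obstacle to be the well-definedness in Step 3, because the denominators live in different monoids (\(T\subseteq N\) and \(S\subseteq M\)) and must be matched through \(\pi\). An alternative that may be cleaner is to show that the semilattice of \(N\) equals that of \(M\) (Lemma~\ref{lem:sl_coext}). Using Remark~\ref{rem:localisations_are_wtih_primes}, we could then replace \(T\) by a single-denominator description, and check the axioms only on fractions with a common denominator.
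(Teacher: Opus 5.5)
Your proposal is correct and takes essentially the paper's route. You localise \(\pi\) at \(T=\pi^{-1}(M\setminus\p)\), let \(\cA_\p\) act on fractions by pushing everything into a common fibre through an auxiliary element of \(T\), and then check well-definedness, regularity and compatibility, which the paper likewise only asserts. Your correction factor \(c_*\) makes explicit what the paper's formula \(\frac{x\pt(t'n)}{t't}\) tacitly assumes, namely representatives with \(\pi(n)=a\) and \(\pi(t)=s\).

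The one thing you must pin down is \(c\). The type condition \(c\,as'=\pi(n\tilde t)\) alone does not fix the result: when \(a\in\p\), different admissible \(c\) can give different elements. You should also require \(c\,ss'=\pi(t\tilde t)\), which the natural choice \(c=\pi(t)u\), \(\pi(\tilde t)=ss'u\) satisfies. With that choice your Step~4 regularity argument goes through.
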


    \begin{proof}
      The localisation \(N_\p = (N \setminus \pi^{-1}(\p))^{-1}N\) maps surjectively to
      \(M_\p = (M \setminus \p)^{-1}M\) via the localisation of \(\pi\). For
      \(\dfrac{a}{s} \in M_\p\), the fibre \(\pi_\p^{-1}\left(\dfrac{a}{s}\right)\)
      consists of all fractions \(\dfrac{n}{t} \in N_\p\) with \(\dfrac{\pi(n)}{\pi(t)}
      = \dfrac{a}{s}\). Given \(\dfrac{x}{(s \cdot \pi(t'))} \in
      \cA_\p\left(\dfrac{a}{s}\right)\) with \(x \in \cA(a \cdot \pi(t'))\) and \(t'
      \in N \setminus \pi^{-1}(\p)\), and \(\dfrac{n}{t} \in
      \pi_\p^{-1}\left(\dfrac{a}{s}\right)\), define:
      \[
        \frac{x}{s \cdot \pi(t')} \pt \frac{n}{t} \;:=\; \frac{x \pt (t' n)}{t' t}.
      \]
      One verifies this is well defined and satisfies the regularity and compatibility
      conditions of Definition~\ref{def:coext}.
    \end{proof}

    \begin{Cor}
      Let \(M\) be a finitely generated monoid and \(0 \to \cA \to N \xto{\pi} M \to
      0\) be a coextension. For any multiplicative subset \(S \subseteq M\), the
      localisation with \(S\) yields an induced coextension of monoids
      \[ 0 \to S^{-1}\cA \to S^{-1}N \to S^{-1}M \to 0. \]
    \end{Cor}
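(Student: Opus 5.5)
The plan is to reduce the corollary to Lemma~\ref{lem:localisation_coext} by means of Remark~\ref{rem:localisations_are_wtih_primes}. The key claim is that for any submonoid \(S \subseteq M\) there is a prime \(\p \in \Spec(M)\) with \(S^{-1}M \cong M_\p\), \(S^{-1}\cA \cong \cA_\p\) and \(S^{-1}N \cong N_\p\), compatibly with the structure maps. The localised coextension \(0 \to S^{-1}\cA \to S^{-1}N \to S^{-1}M \to 0\) is then the one produced in Lemma~\ref{lem:localisation_coext}, transported along these isomorphisms.

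\textbf{Step 1: choose the prime.} Since \(M\) is finitely generated, \(M^{sl}\) is a finite semilattice. Let \(F \subseteq M\) be the face generated by \(S\), namely
\[ F = \{m \in M \mid m u \in S \text{ for some } u \in M\}. \]
In the semilattice picture, \(F\) is the union of the classes lying below some class \([s]\) with \(s \in S\). Finiteness gives a single \(s_0 \in S\) with \([s_0] \geq [s]\) for all \(s \in S\): take \(s_0\) to be a product of representatives of the finitely many classes met by \(S\). Then \(F = \{m \mid m \text{ divides some power of } s_0\}\). This set is a submonoid whose complement \(\p := M \setminus F\) is a prime ideal, which is exactly the construction of Remark~\ref{rem:localisations_are_wtih_primes}.

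\textbf{Step 2: compare the localisations.} We have \(S \subseteq F\). Conversely, every \(f \in F\) satisfies \(fu = s\) for some \(u \in M\) and \(s \in S\), so \(f\) already becomes invertible in \(S^{-1}M\). The universal properties of the two localisations therefore give \(S^{-1}M \cong F^{-1}M = M_\p\). The same argument applies to \(N\) with the submonoids \(\pi^{-1}(S)\) and \(\pi^{-1}(F) = N \setminus \pi^{-1}(\p)\), since divisibility lifts along the surjection \(\pi\). For the system \(\cA\), I will use that \(S^{-1}\cA = i^S_!\cA\) is the left adjoint of \(i_S^*\). Because \(S^{-1}M \cong M_\p\) as monoids under \(M\), the two pullback functors agree, so their left adjoints agree and \(S^{-1}\cA \cong \cA_\p\). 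Finally I check that these isomorphisms intertwine the action formula of Lemma~\ref{lem:localisation_coext}.

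\textbf{Obstacle and how to avoid it.} The main concern is the identification of \(S^{-1}N\) with \(N_\p\). Here \(S^{-1}N\) has to be read as \(\pi^{-1}(S)^{-1}N\), and one must be sure that every \(t \in \pi^{-1}(F)\) becomes invertible in it. This follows because \(\pi(t)u = s\) lifts to \(t u' = x \pt n_s\) for some \(n_s \in \pi^{-1}(S)\), and \(x \pt n_s\) is invertible in the localisation by Lemma~\ref{lem:inv_ext}(1) applied after localising. Alternatively, one can skip the reduction and repeat the proof of Lemma~\ref{lem:localisation_coext} verbatim with \(S\) in place of \(M \setminus \p\), since nothing in that argument used primality. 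Finite generation is needed only to make the face generated by \(S\) the complement of a single prime in a uniform way.
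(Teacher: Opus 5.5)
Your proposal is correct and is the paper's argument: the paper proves the corollary in one line by combining Remark~\ref{rem:localisations_are_wtih_primes} (localising at \(S\) agrees with localising at the face it generates, whose complement is a prime \(\p\)) with Lemma~\ref{lem:localisation_coext}, and you fill in the details of that reduction, including the comparisons of \(S^{-1}N\) with \(N_\p\) and of \(S^{-1}\cA\) with \(\cA_\p\). Two simplifications are possible. First, if \(\pi(t)u=s\in S\) and \(u'\) lifts \(u\), then \(tu'\) already lies in \(\pi^{-1}(S)\), so the detour through Lemma~\ref{lem:inv_ext} is unnecessary. Second, your set \(F=\{m \mid mu\in S \text{ for some } u\}\) is a face with prime complement for any \(M\), so the element \(s_0\), and with it finite generation, is never actually used.
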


    \begin{proof}
      This is essentially Lemma~\ref{lem:localisation_coext} and
      Remark~\ref{rem:localisations_are_wtih_primes}.
    \end{proof}

\subsection{Morphisms of coextensions}

    \begin{De} \label{de:map_coextensions}
      Let \(\cE = (0 \to \cA \to N \xto{\pi} M \to 0)\) and \(\cE' = (0 \to \cA' \to N'
      \xto{\pi'} M' \to 0)\) be coextensions. A \emph{morphism} \(\cE \to \cE'\)
      \[
        \xymatrix{
          0\ar[r] & \cA\ar[r]\ar@{=>}[d]^\alpha & N\ar[r]^\pi\ar[d]^h & M\ar[r]\ar[d]^\varphi & 0 \\
          0\ar[r] & \cA'\ar[r]                  & N'\ar[r]^{\pi'}     & M'\ar[r]              & 0
        }
      \]
      is a triple \((\alpha, h, \varphi)\), where \(h: N \to N'\), \(\varphi: M \to
      M'\) are monoid homomorphisms and \(\alpha: \cA \to \varphi^*(\cA')\) is a
      morphism in \(\cH(M)\) (which is a natural transformation, hence the distinct
      notation), satisfying:
      \begin{itemize}
        \item[(i)] \(\varphi \circ \pi = \pi' \circ h\) (i.e.\ the obvious square
              commutes);
        \item[(ii)] \(h(x \pt n) = \alpha(x) \pt h(n)\) for all \(n \in N\) and \(x \in
              \cA(\pi(n))\).
      \end{itemize}
      The category of all coextensions is denoted \(\coex\).
    \end{De}

    To clarify notation, by \(\alpha(x)\) we mean the following: \(\alpha:\cA\to
    \varphi^*(\cA')\) is a natural transformation. In particular, we have a group
    homomorphism \(\alpha(\pi(n)):\cA(\pi(n))\to \varphi^*(\cA')(\pi(n))\). Hence, we
    set \(\alpha(x) := \alpha(\pi(n))(x)\in \varphi^*(\cA')(\pi(n))\).

    Note also that injectivity, surjectivity and isomorphism of \(\alpha\) means the
    respective term pointwise.

    \begin{Le}[Short Five Lemma for Coextensions]
      \label{lem:short5} Let \((\alpha, h, \varphi): \cE \to \cE'\) be a morphism of
      coextensions. If \(\varphi\) and \(\alpha: \cA \to \varphi^*(\cA')\) are
      isomorphisms, then \(h: N \to N'\) is an isomorphism.
    \end{Le}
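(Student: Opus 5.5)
Since \(h\) is already a monoid homomorphism, it suffices to show that \(h\) is bijective; its set-theoretic inverse is then automatically a monoid homomorphism. The plan is to argue fibrewise over \(M\), using \(\varphi\) to match fibres of \(\pi\) with fibres of \(\pi'\), and the regularity condition of Definition~\ref{def:coext}(i) to identify each fibre with a torsor over the corresponding group.

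\textbf{Injectivity.} Let \(n_1, n_2 \in N\) with \(h(n_1) = h(n_2)\). Then \(\varphi(\pi(n_1)) = \pi'(h(n_1)) = \pi'(h(n_2)) = \varphi(\pi(n_2))\) by Definition~\ref{de:map_coextensions}(i). Since \(\varphi\) is injective, \(\pi(n_1) = \pi(n_2) =: m\). By regularity there is a unique \(x \in \cA(m)\) with \(n_2 = x \pt n_1\). Condition (ii) of Definition~\ref{de:map_coextensions} then gives
\[ h(n_1) = h(n_2) = \alpha(x) \pt h(n_1). \]
Also \(h(n_1) = 0 \pt h(n_1)\). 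The uniqueness part of regularity in \(\cE'\), applied in the fibre over \(\varphi(m)\), therefore forces \(\alpha(x) = 0\) in \(\cA'(\varphi(m)) = \varphi^*(\cA')(m)\). As \(\alpha(m)\) is injective, \(x = 0\), and hence \(n_2 = 0 \pt n_1 = n_1\).

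\textbf{Surjectivity.} Let \(n' \in N'\). Put \(m' = \pi'(n')\) and use bijectivity of \(\varphi\) to write \(m' = \varphi(m)\) for a unique \(m \in M\). Since \(\pi\) is surjective, choose \(n \in \pi^{-1}(m)\). Then \(\pi'(h(n)) = \varphi(m) = m'\), so \(h(n)\) and \(n'\) lie in the same fibre of \(\pi'\). By regularity in \(\cE'\) there is \(y \in \cA'(m') = \varphi^*(\cA')(m)\) with \(n' = y \pt h(n)\). Since \(\alpha(m)\) is surjective, pick \(x \in \cA(m)\) with \(\alpha(x) = y\). Then
\[ h(x \pt n) = \alpha(x) \pt h(n) = n'. \]

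The main point requiring care, rather than a genuine obstacle, is the bookkeeping in these two steps: one must check that \(\alpha(x)\) lives in the group acting on the correct fibre of \(\pi'\), namely \(\varphi^*(\cA')(\pi(n)) = \cA'(\varphi(\pi(n))) = \cA'(\pi'(h(n)))\), which is exactly what Definition~\ref{de:map_coextensions}(i) guarantees. Finally, one may optionally observe that \((\alpha^{-1}, h^{-1}, \varphi^{-1})\) is again a morphism of coextensions, after transporting \(\alpha^{-1}\) along \(\varphi\); this is not needed for the statement.
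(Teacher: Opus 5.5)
Your proof is correct and follows essentially the same route as the paper. Injectivity comes from injectivity of \(\varphi\), regularity in both coextensions and injectivity of \(\alpha\). Surjectivity comes from lifting through \(\varphi\) and \(\pi\), then applying regularity in \(\cE'\) and surjectivity of \(\alpha\).
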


    \begin{proof}
      The proof is entirely analogous to the classical Short Five Lemma for module
      extensions.

      We start by proving that its injective. If \(h(n) = h(n')\), then \(\pi'(h(n)) =
      \pi'(h(n'))\). So \(\varphi(\pi(n)) = \varphi(\pi(n'))\) and since \(\varphi\) is
      injective, we have \(\pi(n) = \pi(n')\). The regularity condition of coextensions
      implies the exists of \(x \in \cA(\pi(n))\) with \(n' = x \pt n\). By
      Definition~\ref{de:map_coextensions}~(ii), we have \(h(n) = h(n') = h(x \pt n) =
      \alpha(x) \pt h(n)\), so \(\alpha(x) \pt h(n) = 1 \pt h(n)\) and regularity now
      implies \(\alpha(x) = 0\). Since \(\alpha\) is injective, we deduce that \(x =
      0\) and thus \(n = n'\).

      To see that its surjective, take \(n' \in N'\) and consider \(\pi'(n')\). There
      exists an element \(n \in N\) with \(\varphi(\pi(n)) = \pi'(n')\) since both
      \(\varphi\) and \(\pi\) are surjective. The commutativity of the right square now
      gives us \(\pi'(h(n)) = \varphi(m) = \pi'(n')\). Regularity of the second
      coextension now implies the existence of \(y' \in \cA'(\pi'(n'))\) with \(n' = y'
      \pt h(n)\). Since \(\alpha\) is surjective, write \(y' = \alpha(y)\) for some \(y
      \in \cA(m)\). Then \(n' = \alpha(y) \pt h(n) = h(y \pt n)\) by
      Definition~\ref{de:map_coextensions}~(ii).
    \end{proof}

\subsection{Split coextensions}

    \begin{De}
      A coextension \(0 \to \cA \to N \xto{\pi} M \to 0\) is \emph{split} if there
      exists a monoid homomorphism \(\mu: M \to N\) with \(\pi \circ \mu = \id_M\).
    \end{De}

    \begin{Rem}
      For the semidirect product with the natural projection \(\pi: M \ltimes \cA \to
      M,\; (a, x) \to a\) and fibre action \(z \pt (x, a) = (z + x, a)\) for \(z \in
      \cA(a)\), the map \(\mu(a) = (0, a)\) defines a splitting.
    \end{Rem}

    \begin{De} \label{def:derivation}
      Let \(\cA\in \cH(M)\) be a system of abelian groups. A \emph{derivation}
      \(\partial: M \to \cA\) is a function (see Remark~\ref{rem:function_N_A})
      satisfying
      \[ \partial(mm') \;=\; m'_*\partial(m) + m_*\partial(m'), \]
      for all \(m, m' \in M\).
    \end{De}

    \begin{Rem} \label{rem:function_N_A}
      Since \(\cA\) is not a monoid but a functor \(\bH(M) \to \Ab\), a ``function \(M
      \to \cA\)'' has to be understood as a collection of functions which associates to
      each element \(m \in M\), an element \(\partial(m) \in \cA(m)\). Thus
      \(\partial\) is a section of the bundle \(\prod\limits_{m \in M} \cA(m)\). Of
      course, we can extend this to a derivation over \(N\) via \(\pi\).
    \end{Rem}

    \begin{Le}
      \label{lem:split} The following are equivalent for a coextension \(0 \to \cA \to
      N \xto{\pi} M \to 0\):
      \begin{enumerate}
        \item The coextension is split.
        \item There exists an \(h: N \xrightarrow{\sim} \cA \rtimes M\), such that
              \((\id_\cA, h, \id_M)\) is an isomorphism of coextensions.
        \item There exists a derivation \(\partial: N \to \pi^*(\cA)\), given by
              \(\partial(x \pt n) = x + \partial(n)\).
      \end{enumerate}
    \end{Le}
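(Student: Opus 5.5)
I would prove the equivalences along the cycle \((1)\Rightarrow(3)\Rightarrow(2)\Rightarrow(1)\). Throughout, condition (3) is read as asking for a derivation \(\partial: N \to \pi^*(\cA)\), so \(\partial(n) \in \cA(\pi(n))\) and \(\partial(nn') = \pi(n')_*\partial(n) + \pi(n)_*\partial(n')\), which in addition satisfies \(\partial(x \pt n) = x + \partial(n)\). Likewise \(\cA \rtimes M\) in (2) is read as the semidirect product \(M \ltimes \cA\) of Section~\ref{sec:semidirect}.

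\textbf{\((1)\Rightarrow(3)\).} Let \(\mu\) be a splitting. For \(n \in N\), both \(n\) and \(\mu(\pi(n))\) lie in \(\pi^{-1}(\pi(n))\). By regularity (Definition~\ref{def:coext}(i)) there is a unique \(\partial(n) \in \cA(\pi(n))\) with
\[ n \;=\; \partial(n) \pt \mu(\pi(n)). \]
The translation property \(\partial(x \pt n) = x + \partial(n)\) follows from uniqueness, since \(x \pt n = (x + \partial(n)) \pt \mu\pi(n)\). For the derivation identity, write \(n = \partial(n) \pt \mu\pi(n)\) and \(n' = \partial(n') \pt \mu\pi(n')\). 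Compatibility (Definition~\ref{def:coext}(ii)), together with the fact that \(\mu\pi\) is a homomorphism, gives
\[ nn' \;=\; \bigl(\pi(n')_*\partial(n) + \pi(n)_*\partial(n')\bigr) \pt \mu\pi(nn'). \]
Uniqueness in regularity then yields the Leibniz rule.

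\textbf{\((3)\Rightarrow(2)\).} Given \(\partial\), I define \(h: N \to M \ltimes \cA\) by \(h(n) = (\pi(n), \partial(n))\). This is a monoid homomorphism precisely because of the Leibniz rule and the multiplication formula of the semidirect product. It also sends \(1_N\) to \((1, 0)\): taking \(n = n' = 1\) in the Leibniz rule gives \(\partial(1) = 2\partial(1)\), hence \(\partial(1) = 0\). The two conditions of Definition~\ref{de:map_coextensions} then hold:
\begin{itemize}
  \item condition (i), \(p \circ h = \pi\), is immediate;
  \item condition (ii) reads \(h(x \pt n) = (\pi(n), x + \partial(n)) = x \pt h(n)\), which uses the fibre action \(z \pt (a, x) = (a, z + x)\) on the semidirect product and the translation property of \(\partial\).
\end{itemize}
Now \((\id_\cA, h, \id_M)\) is a morphism of coextensions whose outer components are isomorphisms, so by the Short Five Lemma~\ref{lem:short5} it is an isomorphism. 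One could also see directly that \(h\) is bijective using regularity.

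\textbf{\((2)\Rightarrow(1)\).} The semidirect product has the splitting \(a \mapsto (a, 0)\). Set \(\mu(a) := h^{-1}(a, 0)\). Because \(h\) is compatible with the projections to \(M\), we get \(\pi \circ \mu = \id_M\), so the coextension is split.

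The only step needing care is \((1)\Rightarrow(3)\). There one must keep track of which group \(\cA(\pi(n))\) each term lives in, and apply compatibility with the correct pushforwards \(\pi(n)_*\) and \(\pi(n')_*\) so that both sides land in \(\cA(\pi(nn'))\). Once this bookkeeping is done, the rest is formal.
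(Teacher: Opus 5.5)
Your proof is correct and matches the paper's argument: the same cycle \((1)\Rightarrow(3)\Rightarrow(2)\Rightarrow(1)\), the same \(\partial\) defined by regularity against \(\mu\pi(n)\), the same map \(h(n) = (\pi(n), \partial(n))\) made an isomorphism by the Short Five Lemma, and the canonical splitting of the semidirect product. Your extra checks that \(\partial(1)=0\) (so \(h\) is unital) and that the splitting transports as \(\mu(a) = h^{-1}(a,0)\) fill in small details the paper leaves implicit.
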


    \begin{proof}
      \((1) \Rightarrow (3)\): Let \(\mu: M \to N\) be a monoid homomorphism with \(\pi
      \mu = \id_M\) and \(n \in N\). Since \(n\) and \(\mu(m)\) both lie in
      \(\pi^{-1}(m)\), where \(m = \pi(n)\), there must exist a unique element in
      \(\cA(m)\), let us just call it the symbol \(\partial(n)\), with \(n =
      \partial(n) \pt \mu(m)\) (regularity). For \(n, n' \in N\) with \(m = \pi(n)\),
      \(m' = \pi(n')\), we have
      \begin{eqnarray*}
        nn' & = & (\partial(n) \pt \mu(m))(\partial(n') \pt \mu(m'))         \\
            & = & (m'_*\partial(n) + m_*\partial(n')) \pt \mu(mm')           \\
            & = & (m'_*\partial(n) + m_*\partial(n')) \pt \mu(\pi(n)\pi(n')) \\
            & = & (m'_*\partial(n) + m_*\partial(n')) \pt \mu(\pi(nn')).
      \end{eqnarray*}
      Our definition (symbol) of \(\partial\) now implies
      \[ \partial(nn') \;=\; m'_*\partial(n) + m_*\partial(n'). \]
      Next, let \(x \in \cA(m)\). We have
      \[
        x \pt n \;=\; x \pt (\partial(n) \pt \mu(m)) \;=\; (x + \partial(n)) \pt \mu(m)
      \]
      (where the second equation is just the action), giving \(\partial(x \pt n) = x +
      \partial(n)\).

      \((3) \Rightarrow (2)\): Define \(h: N \to \cA \rtimes M\) by \(h(n) =
      (\partial(n), \pi(n))\). Then
      \begin{eqnarray*}
        h(nn') & = & (\partial(nn'), \pi(nn'))                                      \\
               & = & (\pi(n')_*\partial(n) + \pi(n)_*\partial(n'),\; \pi(n)\pi(n')) \\
               & = & h(n) \cdot h(n').
      \end{eqnarray*}
      It is compatible with the action as \(h(x \pt n) = (x + \partial(n), \pi(n)) = x
      \pt h(n)\), and thus, \((\id_\cA, h, \id_M)\) is a morphism of coextensions. The
      fact that its an isomorphism follows from Lemma~\ref{lem:short5}.

      \((2) \Rightarrow (1)\): The semidirect product always splits via \(\mu(a) = (0,
      a)\).
    \end{proof}

\subsection{Fixed-monoid categories and the groupoid structure}

    For a fixed monoid \(M\) and a system of abelian groups \(\cA \in \cH(M)\), let
    \(\coex(M, \cA)\) denote the full subcategory of coextensions \(\coex\) with
    objects \(0 \to \cA \to N \xto{\pi} M \to 0\) and morphisms \((\id_\cA, h,
    \id_M)\).

    In other words, the extremities are fixed, but the central part may vary.

    \begin{Pro}
      \(\coex(M, \cA)\) is a groupoid. The semidirect product \(\cA \rtimes M\) defines
      a distinguished base object. An object is isomorphic to \(\cA \rtimes M\) if and
      only if it is split.
    \end{Pro}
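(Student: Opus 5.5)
Every morphism in \(\coex(M, \cA)\) is of the form \((\id_\cA, h, \id_M)\). Since \(\id_M\) and \(\id_\cA\) are isomorphisms, Lemma~\ref{lem:short5} shows that \(h\) is a bijective monoid homomorphism. So the plan is to check that the triple \((\id_\cA, h^{-1}, \id_M)\) is again a morphism of coextensions; it will then be a two-sided inverse, since composition of triples is componentwise.

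For this we verify the two conditions of Definition~\ref{de:map_coextensions} for \(h^{-1}\).
\begin{itemize}
  \item[(i)] From \(\pi' \circ h = \pi\) we get \(\pi' = \pi \circ h^{-1}\).
  \item[(ii)] Let \(n' \in N'\) and \(x \in \cA(\pi'(n'))\), and write \(n' = h(n)\). Then \(\pi(n) = \pi'(n')\), and
  \[ h(x \pt n) = x \pt h(n) = x \pt n'. \]
  Applying \(h^{-1}\) gives \(h^{-1}(x \pt n') = x \pt h^{-1}(n')\).
\end{itemize}
The inverse of a bijective monoid homomorphism is again a monoid homomorphism. Hence every morphism is invertible, and \(\coex(M,\cA)\) is a groupoid.

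Next we show that the semidirect product is an object. The projection \(M \ltimes \cA \to M\) is a surjective homomorphism. The action \(z \pt (a,x) = (a, z+x)\) is regular, because \(\cA(a)\) acts on itself by translation, which is simply transitive. For compatibility we compute
\[ (a, z+x)(a', z'+x') = \bigl(aa',\, a'_*(z) + a_*(z') + a'_*(x) + a_*(x')\bigr), \]
which equals \((a'_*(z) + a_*(z')) \pt \bigl((a,x)(a',x')\bigr)\) by additivity of \(a_*\) and \(a'_*\). This is what makes it a distinguished base object, and the Remark above shows that it is split via \(\mu(a) = (a,0)\).

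For the final claim we use Lemma~\ref{lem:split}. If a coextension is split, the implication (1)\(\Rightarrow\)(2) there gives an isomorphism \((\id_\cA, h, \id_M)\) onto the semidirect product, and this isomorphism lies in \(\coex(M,\cA)\). Conversely, suppose \((\id_\cA, h, \id_M) : N \to M \ltimes \cA\) is an isomorphism. Then \(h^{-1} \circ \mu\) is a monoid homomorphism \(M \to N\), and
\[ \pi \circ h^{-1} \circ \mu = p \circ \mu = \id_M, \]
where \(p\) is the projection. So \(N\) is split.

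The only point requiring care is condition (ii) for \(h^{-1}\). It needs that \(x\) lies in the correct group \(\cA(\pi(n)) = \cA(\pi'(n'))\), which holds because of condition (i). The rest is formal.
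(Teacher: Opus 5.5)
Your proof is correct and follows the paper's route: Lemma~\ref{lem:short5} makes every morphism \((\id_\cA, h, \id_M)\) invertible, and Lemma~\ref{lem:split} characterises the objects isomorphic to the semidirect product. You also supply details the paper's two-line proof leaves implicit. These are that \((\id_\cA, h^{-1}, \id_M)\) again satisfies condition (ii) of Definition~\ref{de:map_coextensions}, that the semidirect product really is a coextension, and the explicit splitting \(h^{-1}\circ\mu\) for the converse direction.
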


    \begin{proof}
      Every morphism \((\id_\cA, h, \id_M)\) in \(\coex(M, \cA)\) is an isomorphism by
      Lemma~\ref{lem:short5}. The remaining claim follows from Lemma~\ref{lem:split}.
    \end{proof}

    The set of isomorphism classes of \(\coex(M, \cA)\) is denoted \(\pi_0(\coex(M,
    \cA))\).

\subsection{Pullback along a monoid homomorphism}

    Let \(\varphi: N \to M\) be a monoid homomorphism and \(\cE = (0\to \cA\to L
    \xto{\pi} M\to 0)\in \coex(M)\) a coextension of \(M\). We can form the pullback of
    \(L\xto{\pi} M \xleftarrow{\varphi} N\), meaning
    \[
      \xymatrix{
        0 \ar[r] & \cA \ar[r] & L \ar[r]^{\pi}                                   & M \ar[r]                & 0 \\
                 &            & \square \ar@{..>}[r]_{\pi_1}\ar@{..>}[u]^{\pi_2} & N\ar[r]\ar[u]_{\varphi}
                 & 0,
      }
    \]
    in the category of monoids. This is given by
    \[ K \;:=\; \{(n, l) \in N \times L \mid \varphi(n) \;=\; \pi(l)\}. \]
    For \(n \in N\) and \(x \in (\varphi^*\cA)(n) = \cA(\varphi(n))\), define \(x \pt
    (n, l) = (n, x \pt l)\). This gives a coextension
    \[ \varphi^*(\cE) \;=\; (0 \to \varphi^*\cA \to K \xto{\pi_1} N \to 0). \]
    \begin{De}
      The above construction defines the \emph{pullback} functor
      \[ \varphi^*: \coex(M) \to \coex(N). \]
    \end{De}

    \begin{Le}
      The coextension \(\varphi^*(\cE)\) splits if and only if there exists a monoid
      homomorphism \(\psi: N \to L\) with \(\pi \circ \psi = \varphi\).
    \end{Le}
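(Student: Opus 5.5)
This is the universal property of the pullback, applied to the definition of a split coextension. I will prove the two directions separately, working with the explicit model
\[ K = \{(n,l)\in N\times L \mid \varphi(n)=\pi(l)\} \]
and the projections \(\pi_1\colon K\to N\), \(\pi_2\colon K\to L\).

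\emph{Splitting gives \(\psi\).} Suppose \(\varphi^*(\cE)\) splits, so there is a monoid homomorphism \(\mu\colon N\to K\) with \(\pi_1\circ\mu=\id_N\). Set \(\psi:=\pi_2\circ\mu\colon N\to L\); it is a monoid homomorphism because \(\pi_2\) is (the pullback is formed in \(\CM\)). For \(n\in N\), write \(\mu(n)=(n',l)\). Then \(n'=\pi_1(\mu(n))=n\), and since \(\mu(n)\in K\) we get
\[ \pi(\psi(n))=\pi(l)=\varphi(n'). \]
Hence \(\pi\circ\psi=\varphi\).

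\emph{\(\psi\) gives a splitting.} Suppose \(\psi\colon N\to L\) is a monoid homomorphism with \(\pi\circ\psi=\varphi\). Define \(\mu(n):=(n,\psi(n))\). This lands in \(K\) because \(\varphi(n)=\pi(\psi(n))\). It is a monoid homomorphism because multiplication in \(K\) is componentwise and \(\psi(1)=1\). Finally \(\pi_1\circ\mu=\id_N\) by construction, so \(\varphi^*(\cE)\) is split in the sense of the definition.

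Neither direction contains a real obstacle. The only point needing care is that the monoid structure on \(K\) is the componentwise one inherited from \(N\times L\). This is what makes \(\pi_1\), \(\pi_2\) and \(n\mapsto(n,\psi(n))\) monoid homomorphisms. The splitting notion involves only the underlying surjection \(\pi_1\), not the \(\varphi^*\cA\)-action, so the action never enters. One could also note that the two constructions are mutually inverse, giving a bijection between splittings of \(\varphi^*(\cE)\) and lifts \(\psi\) of \(\varphi\) through \(\pi\). This follows from the same universal property but is not required.
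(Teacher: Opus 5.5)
Your proof is correct and is essentially the paper's argument. The paper's one-line proof says that, by the universal property of the pullback, a section \(\mu\) of \(K \to N\) corresponds exactly to the pair \((\id_N,\psi)\) with \(\psi=\pi_2\circ\mu\) and \(\pi\circ\psi=\varphi\); you verify both directions explicitly in the model \(K\subseteq N\times L\), and you correctly split \(\pi_1\), where the paper's proof writes \(\pi_2\) by a slip.
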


    \begin{proof}
      A splitting \(\mu: N \to K\) of \(\pi_2\) corresponds exactly to a pair \((\id_N,
      \psi)\) where \(\psi = \pi_2 \circ \mu: N \to L\) satisfies \(\pi \circ \psi =
      \varphi \circ \id_N = \varphi\).
    \end{proof}

\subsection{Push-forward along a morphism of systems}

    Let \(\alpha: \cA \to \cB\) be a morphism in \(\cH(M)\) and \(\cE = (0 \to \cA \to
    N \xto{\pi} M \to 0) \in \coex(M)\). We want to create a type of ```push-forward'''
    for \(\cB\xleftarrow{\alpha} \cA \to N\) in \(\coex(M)\), meaning:
    \[
      \xymatrix{
        0 \ar[r] & \cA \ar[r] \ar[d]_{\alpha} & N \ar[r]^{\pi} \ar@{..>}[d]^{\rho_2} & M \ar[r] \ar[d]^{\id} & 0 \\
        0\ar[r]  & \cB \ar@{..>}[r]_{\rho_1}  & \square \ar@{..>}[r]_{\sigma}        & M \ar[r]
                 & 0.
      }
    \]
    Let \(K\) be the set of equivalence classes of pairs \((y, n)\) with \(y \in
    \cB(\pi(n))\), \(n \in N\), where \((y, n) \sim (y', n')\) if
    \begin{itemize}
      \item \(\pi(n) = \pi(n')\),
      \item there exists \(x \in \cA(\pi(n))\) with \(x \pt n = n'\) and \(y -
            \alpha(x) = y'\).
    \end{itemize}
    Write \([y, n]\) for the class of \((y, n)\). We can define the structure of a
    commutative monoid on \(K\) by declaring
    \[ [y, n][z, n'] \;=\; [\pi(n')_*(y) + \pi(n)_*(z), nn']. \]
    We can define a monoid homomorphism \(\sigma: K \to M\), given by \(\sigma[y, n] =
    \pi(n)\). Furthermore, we have an action of \(\cB\) on the fibres of \(\sigma\)
    given by \(u \pt [y, n] = [u + y, n]\), for \(u \in \cB(\pi(n))\). This defines a
    coextension \((0 \to \cB \to K \xto{\sigma} M \to 0)\) of \(M\).

    \begin{De}
      The above construction defines the \emph{push-forward} functor
      \[ \alpha_*: \coex(M, \cA) \to \coex(M, \cB). \]
    \end{De}

    \begin{Le}
      The coextension \(\alpha_*(\cE)\) splits if and only if there exists a derivation
      \(\partial: N \to \pi^*(\cB)\) satisfying \(\partial(x \pt n) = \alpha(x) +
      \partial(n)\) for all \(n \in N\) and \(x \in \cA(\pi(n))\).
    \end{Le}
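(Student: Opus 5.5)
We will reduce the statement to Lemma~\ref{lem:split}, applied to the push-forward \(\alpha_*(\cE) = (0 \to \cB \to K \xto{\sigma} M \to 0)\). By that lemma, \(\alpha_*(\cE)\) splits if and only if there is a derivation \(D: K \to \sigma^*(\cB)\) with \(D(u \pt k) = u + D(k)\) for \(u \in \cB(\sigma(k))\). The task is therefore to show that such derivations \(D\) on \(K\) correspond exactly to derivations \(\partial: N \to \pi^*(\cB)\) satisfying \(\partial(x \pt n) = \alpha(x) + \partial(n)\). The two directions of this correspondence are obtained by composing with, respectively extending along, the canonical map \(\rho_2: N \to K\), \(n \mapsto [0, n]\).

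\textbf{Direction 1 (splitting gives \(\partial\)).} Given \(D\), set \(\partial := D \circ \rho_2\), that is, \(\partial(n) = D[0,n] \in \cB(\pi(n))\). Since \(\sigma \rho_2 = \pi\), this lands in the correct groups. To see that \(\rho_2\) is a monoid homomorphism, compute
\[ [0,n][0,n'] = [\pi(n')_*(0) + \pi(n)_*(0), nn'] = [0, nn']. \]
Hence \(\partial\) is a derivation, because \(D\) is one. For the twisting condition, use the defining relation of \(K\): taking \(x \in \cA(\pi(n))\) gives \((\alpha(x), n) \sim (0, x \pt n)\). This means \([0, x \pt n] = [\alpha(x), n] = \alpha(x) \pt [0, n]\). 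Applying \(D\) then gives \(\partial(x \pt n) = \alpha(x) + \partial(n)\).

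\textbf{Direction 2 (\(\partial\) gives a splitting).} Conversely, given \(\partial\), define \(D[y, n] := y + \partial(n)\). The main point to check is well-definedness. Suppose \((y,n) \sim (y',n')\), so that \(n' = x \pt n\) and \(y' = y - \alpha(x)\). Then
\[ y' + \partial(n') = y - \alpha(x) + \alpha(x) + \partial(n) = y + \partial(n), \]
which is exactly where the hypothesis on \(\partial\) is used. The derivation rule follows from the multiplication formula in \(K\) and the derivation property of \(\partial\):
\[ D([y,n][z,n']) = \pi(n')_*(y) + \pi(n)_*(z) + \pi(n')_*\partial(n) + \pi(n)_*\partial(n'). \]
This equals \(\pi(n')_* D[y,n] + \pi(n)_* D[z,n']\), using additivity of \(r_*\). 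Finally, \(D(u \pt [y,n]) = D[u+y,n] = u + D[y,n]\). Lemma~\ref{lem:split} now yields the splitting.

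Alternatively, one can bypass derivations on \(K\) and work with splittings directly. A splitting \(\mu: M \to K\) is given by \(\mu(\pi(n)) = [-\partial(n), n]\), and this is independent of the choice of \(n\) in its fibre by the same computation.

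The only step that is not purely formal is this well-definedness on equivalence classes. We also tacitly rely on the fact, asserted in the construction, that \(K\) is a coextension: the monoid structure on \(K\) is well defined, so Lemma~\ref{lem:split} applies. We do not reprove that fact here.
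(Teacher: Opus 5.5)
Your proof is correct and follows the same route as the paper. In both, the equivalence is reduced to Lemma~\ref{lem:split}(3): you compose the derivation on \(K\) with \(n \mapsto [0,n]\) in one direction, and define \(D[y,n] = y + \partial(n)\) in the other. You also carry out the well-definedness and derivation checks that the paper leaves to the reader, and your direct splitting \(\mu(\pi(n)) = [-\partial(n), n]\) is a valid alternative.
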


    \begin{proof}
      Suppose \(\alpha_*(\cE)\) splits. Take the derivation \(\partial': K \to \cB\)
      provided by Lemma~\ref{lem:split}(3). The composite \(\partial = \partial' \circ
      g\), where \(g: N \to K\), \(g(n) = [0, n]\), satisfies
      \[
        \partial(x \pt n) \;=\; \partial'([\alpha(x), n])
        \;=\; \alpha(x) + \partial'([0, n]) \;=\; \alpha(x) + \partial(n).
      \]
      Conversely, given a derivation \(\partial\) as in the condition of the lemma, we
      define \(\partial'([y, n]) = y + \partial(n)\). One can check that its
      well-defined and satisfies the derivation condition for \(\alpha_*(\cE)\).
    \end{proof}

\subsection{The Baer sum and the categorical group structure of \(\coex(M, \cA)\)}

    The aim of this subsection is to show that \(\coex(M, \cA)\) carries the structure
    of a \emph{symmetric categorical group} (also called a Picard groupoid). This is a
    groupoid equipped with a symmetric monoidal structure in which every object is
    invertible (in the categorical sense, meaning up to isomorphism).

    \begin{De}
      Given two coextensions \(\cE, \cE'\) of \(M\) by \(\cA\), their \emph{Baer sum}
      is
      \[ \cE + \cE' \;:=\; \nabla_*(\Delta^*(\cE \times \cE')), \]
      where
      \[
        \cE \times \cE' \;=\; (0 \to \cA \oplus \cA \to N \times N' \xto{\pi \times
        \pi'} M \times M \to 0),
      \]
      and
      \[
        \begin{array}{rcll}
          \Delta: M \to M \times M,       &  & a \mapsto (a, a)     & \,\text{(diagonal)}\, , \\
          \nabla: \cA \oplus \cA \to \cA, &  & (x, y) \mapsto x + y & \,\text{(fold map)}\, .
        \end{array}
      \]
    \end{De}

    To put it an other way, \(\Delta^*(\cE \times \cE')\) is the coextension of \(M\)
    by \(\cA \oplus \cA\) with underlying monoid
    \[ \{(m, n, n') \mid m \in M,\, \pi(n) \;=\; \pi'(n') \;=\; m\}, \]
    and \(\nabla_*\) pushes this forward along the fold map to produce a coextension of
    \(M\) by \(\cA\).

    \begin{Pro} \label{prop:coex_catgroup}
      The Baer sum makes \(\coex(M, \cA)\) into a symmetric categorical group in the
      following way:
      \begin{itemize}
        \item The unit object is the semidirect product \(M \ltimes \cA\).
        \item The inverse of a coextension \(\cE = (0 \to \cA \to N \xto{\pi} M \to
              0)\) is given by \(\cE^{-1} := \nu_*(\cE)\), where \(\nu: \cA \to \cA\)
              is the negation \(x \mapsto -x\).
        \item The symmetric monoidal structure is given by the Baer sum, mapping
              \[ (\cE, \cE')\to \cE + \cE' \xrightarrow{\sim} \cE' + \cE, \]
              where the swap of the two factors is what makes it symmetric.
      \end{itemize}
      In particular, both \(\pi_0(\coex(M, \cA))\)and \(\pi_1(\coex(M, \cA)) := \Aut(M
      \ltimes \cA)\) are abelian groups.
    \end{Pro}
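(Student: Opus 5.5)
The proof has three parts: make the Baer sum explicit, construct the coherence isomorphisms, and show every object is invertible. The explicit form is a quotient: \(\cE+\cE'\) has underlying monoid the classes \([x,n,n']\) with \(\pi(n)=\pi'(n')\) and \(x\in\cA(\pi(n))\), where
\[ [x, y\pt n, n'] \;=\; [x+y, n, n'] \;=\; [x, n, y\pt n'], \]
and the \(\cA\)-action is on the first coordinate. Since \(\coex(M,\cA)\) is already a groupoid (by the preceding proposition and Lemma~\ref{lem:short5}), every morphism of coextensions over \(\id_\cA,\id_M\) is automatically an isomorphism. So in each step it suffices to write down a morphism of coextensions and check that it is a monoid homomorphism compatible with the action.

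\emph{Functoriality and coherence.} A pair of morphisms \(h:N\to\tilde N\), \(h':N'\to\tilde N'\) induces \([x,n,n']\mapsto[x,h(n),h'(n')]\), so the Baer sum is a bifunctor. The associator \((\cE+\cE')+\cE''\to\cE+(\cE'+\cE'')\) is \([x,[y,n,n'],n'']\mapsto[x+y,n,n',n'']\) on both sides, via the canonical identification with the triple quotient. The symmetry is \([x,n,n']\mapsto[x,n',n]\). All the coherence diagrams (pentagon, hexagon, \(s^2=\id\)) commute on representatives, because the structure maps only permute and add labels in an abelian group. Two points need checking:
\begin{itemize}
  \item well-definedness on classes, which holds because the relation is symmetric in \(n,n'\);
  \item compatibility with multiplication, which holds because the product \([x,n,n'][z,m,m']=[\pi(m)_*x+\pi(n)_*z,nm,n'm']\) is symmetric in its roles.
\end{itemize}

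\emph{Unit.} I would write the semidirect product as pairs \((a,u)\) and define \(\lambda:(M\ltimes\cA)+\cE\to\cE\) by \([x,(a,u),n]\mapsto(x+u)\pt n\). It respects the relation because shifting \(u\) by \(y\) and \(x\) by \(-y\) leaves \(x+u\) unchanged, and shifting \(n\) is absorbed the same way. It is multiplicative by compatibility (ii) of Definition~\ref{def:coext} together with the product \((a,u)(a',u')=(aa',a_*u'+a'_*u)\). Right unitality then follows from \(\lambda\) and the symmetry, and the triangle axiom is again a check on representatives.

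\emph{Inverses.} For \(\cE^{-1}=\nu_*(\cE)\), with elements \([y,n]\) where \([y,x\pt n]=[y+x,n]\), I take the sum \(\cE+\cE^{-1}\) with elements \([x,n,[y,n']]\). The splitting is \(\mu(m)=[0,n,[0,n]]\) for any \(n\in\pi^{-1}(m)\), which is independent of the choice: replacing \(n\) by \(z\pt n\) shifts the first slot by \(+z\) and the \(\nu_*\) slot by \(-z\), and these cancel. It is a homomorphism by (ii), since the two twists again cancel. Lemma~\ref{lem:split} then gives \(\cE+\cE^{-1}\cong M\ltimes\cA\).

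Finally, \(\pi_0\) inherits an abelian group structure from the symmetric monoidal structure with inverses. The automorphism group of the unit in a monoidal category is abelian by the Eckmann--Hilton argument, so \(\pi_1\) is abelian. I expect the main obstacle to be the bookkeeping for well-definedness on equivalence classes, especially in the inverse step, where the choice of lift of \(m\) must cancel between the \(\cA\)-twist and the \(\nu\)-twist. Everything else reduces to compatibility (ii) and functoriality of \(r_*\).
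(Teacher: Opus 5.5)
Your argument is correct, and it departs from the paper mainly in how it proves the inverse axiom. The paper does not prove \(\cE+\nu_*(\cE)\cong M\ltimes\cA\) directly. It defers to Theorem~\ref{thm:classification}, arguing that \([\cE]+[\cE^{-1}]\) corresponds to \(f+(-f)=0\) in \(\sD^1(M,\cA)\); this tacitly uses that \(\nu_*\) negates the cocycle. You instead write down the canonical splitting \(\mu(m)=[0,n,[0,n]]\) of \(\cE+\nu_*(\cE)\) and invoke Lemma~\ref{lem:split}. Your route keeps the categorical-group structure self-contained within Section~\ref{sec:coext}, needs no choice of set-theoretic lifts, and yields an explicit isomorphism. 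The paper's route is shorter but is only complete once the classification is available.

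For the remaining axioms you follow the paper's outline more carefully. The paper asserts the Baer sum is strictly associative and writes its unit isomorphism only on representatives of the form \([y,(n,(0,m))]\). You give an explicit associator onto the triple quotient and a unitor \(\lambda\) checked on all representatives. Your Eckmann--Hilton argument also supplies the commutativity of \(\pi_1\), which the paper's proof leaves implicit; it follows from \(\pi_1\cong\sD^0(M,\cA)\) in Corollary~\ref{cor:classification_corrolaries}.

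One slip to correct: in \(\nu_*(\cE)\) the relation is \([y,x\pt n]=[y-x,n]\), not \([y+x,n]\). The push-forward identifies \((y,n)\) with \((y-\nu(x),x\pt n)=(y+x,x\pt n)\). With the sign as you wrote it, \(\nu_*(\cE)\) would simply be \(\cE\), and in your independence check the two shifts would add up to \(2z\) instead of cancelling. Your cancellation argument in fact uses the correct sign, so only the displayed relation needs fixing. Also, once \(\mu\) is known to be independent of the lift, multiplicativity is immediate without appealing to (ii): \(\mu(m)\mu(m')=[0,nn',[0,nn']]=\mu(mm')\).
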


    \begin{proof}
      We give an abridged proof, as verifying everything in detail takes a fair amount
      of writing, but is relatively clear: Associativity, meaning a natural isomorphism
      \[ (\cE_1 + \cE_2) + \cE_3 \xrightarrow{\sim} \cE_1 + (\cE_2 + \cE_3) \]
      follows from the fact that everything in the construction of the Baer sum is
      associativity. More precisely, both the fold map \(\nabla\) and the fibre product
      over \(M\). Moreover, both sides have the underlying set
      \[ \{(n_1, n_2, n_3) \mid \pi(n_i) \;=\; m\} \]
      with the same multiplication law, making \((- + -)\) a strict monoidal structure.

      That the semi-direct product \(M \ltimes \cA\) is the unit, meaning
      \[ \cE + (M \ltimes \cA) \cong \cE \]
      can be verified by the isomorphism
      \[ [y, (n, (0, m))] \mapsto [y, n] \]
      from \(\nabla_* \Delta^*(N \times (M \ltimes \cA))\) to \(N\), which one checks
      respects the coextension structure.

      The inverse axiom will follow from the classification
      (Theorem~\ref{thm:classification}) below. Specifically, to the isomorphism
      classes, \([\cE] + [\cE^{-1}]\) corresponds to \(f + (-f) = 0\) in \(\sD^1(M,
      \cA)\), with \(0\in \sD^1(M,\cA)\) corresponding to the semi-direct product.

      The symmetry
      \[ \cE + \cE' \cong \cE' + \cE \]
      follows since we have an isomorphism of coextensions
      \[ \nabla_*\Delta^*(N \times N') \simeq \nabla_*\Delta^*(N' \times N) \]
      given via the swap
      \[
        [\pi(n')_*(x) + \pi(n)_*(y), nn'] \leftrightarrow [\pi(n)_*(y) + \pi(n')_*(x),
        n'n],
      \]
      since \(\cA\) is abelian and \(M\) commutative.
    \end{proof}

\section{The Grillet Complex and Classification} \label{sec:grillet}

\subsection{The truncated Grillet complex}

    \begin{De}[{\cite[Section~XII.5]{gr}}] \label{def:grillet}
      Let \(M\) be a commutative monoid and \(\cA \in \cH(M)\). Define cochain groups:
      \begin{align*}
        \sC^0(M, \cA) & = \{g: M \to \cA \text{ with } g(a)\in \cA(a) \}                                 \\
        \sC^1(M, \cA) & = \{f: M\times M \to \cA \text{ with } f(a,b)\in \cA(ab) \mid f(a,b) = f(b,a)\}, \\
        \sC^2(M, \cA) & = \{h: M\times M\times M \to \cA \text{ with } h(a,b,c)\in \cA(abc) \mid h(a,b,c) - h(b,a,c) + h(b,c,a) = 0 \}
      \end{align*}
      Define coboundary maps:
      \begin{align*}
        (\partial^0 g)(a, b)    & = a_* g(b) - g(ab) + b_* g(a), \\
        (\partial^1 f)(a, b, c) & = a_* f(b,c) - f(ab, c) + f(a, bc) - c_* f(a,b).
      \end{align*}
    \end{De}

    \begin{Pro}
      We have \(\partial^1 \circ \partial^0 = 0\) and thus
      \begin{eqnarray} \label{eqn:cochain}
        \sC^0(M, \cA) \xto{\partial^0} \sC^1(M, \cA) \xto{\partial^1} \sC^2(M, \cA)
      \end{eqnarray}
      is a cochain complex.
    \end{Pro}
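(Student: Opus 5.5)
The proof is a direct computation, done in three steps. First I will check that the maps land where they should: each value lies in the correct group, \(\partial^0\) maps \(\sC^0\) into \(\sC^1\), and \(\partial^1\) maps \(\sC^1\) into \(\sC^2\). Then I will verify \(\partial^1\circ\partial^0=0\). The only tools are the functoriality identities \(1_*=\id\) and \((rs)_*=r_*\circ s_*\) of \(\cA\in\cH(M)\), commutativity of \(M\), and the fact that each \(\cA(m)\) is abelian.

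\textbf{Step 1 (well-definedness of \(\partial^0\)).} For \(g\in\sC^0(M,\cA)\), each term of \((\partial^0 g)(a,b)\) lies in \(\cA(ab)\). Indeed, \(a_*g(b)\in\cA(ab)\), \(b_*g(a)\in\cA(ba)=\cA(ab)\), and \(g(ab)\in\cA(ab)\). The symmetry \((\partial^0g)(a,b)=(\partial^0g)(b,a)\) holds because \(ab=ba\) and addition in \(\cA(ab)\) is commutative. Hence \(\partial^0 g\in\sC^1(M,\cA)\).

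\textbf{Step 2 (well-definedness of \(\partial^1\)).} For \(f\in\sC^1(M,\cA)\), all four terms of \((\partial^1f)(a,b,c)\) lie in \(\cA(abc)\) by the same reasoning. It remains to verify the defining relation of \(\sC^2\). Put \(h=\partial^1f\) and expand \(h(a,b,c)-h(b,a,c)+h(b,c,a)\) into twelve terms. Using the symmetry \(f(x,y)=f(y,x)\) and the equalities \(ac=ca\), \(bc=cb\) to identify arguments, the terms cancel in pairs:
\[
\begin{aligned}
  &a_*f(b,c)\ \text{with}\ -a_*f(c,b), &\quad& f(ab,c)\ \text{with}\ -f(ab,c), \\
  &f(a,bc)\ \text{with}\ -f(bc,a), &\quad& c_*f(a,b)\ \text{with}\ -c_*f(a,b), \\
  &b_*f(a,c)\ \text{with}\ -b_*f(c,a), &\quad& f(b,ac)\ \text{with}\ -f(b,ca).
\end{aligned}
\]
Thus the sum is \(0\) and \(\partial^1 f\in\sC^2(M,\cA)\).

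\textbf{Step 3 (\(\partial^1\partial^0=0\)).} Substitute \(f=\partial^0g\) into \(\partial^1\) and push the operators \(a_*\), \(c_*\) inside using \((rs)_*=r_*s_*\). This produces twelve terms, which cancel in pairs:
\[
\begin{aligned}
  &(ab)_*g(c)\ \text{with}\ -(ab)_*g(c), &\quad& a_*g(bc)\ \text{with}\ -a_*g(bc), \\
  &(ac)_*g(b)\ \text{with}\ -(ca)_*g(b), &\quad& g(abc)\ \text{with}\ -g(abc), \\
  &c_*g(ab)\ \text{with}\ -c_*g(ab), &\quad& (bc)_*g(a)\ \text{with}\ -(cb)_*g(a).
\end{aligned}
\]
Commutativity of \(M\) is what makes \((ac)_*=(ca)_*\) and \((bc)_*=(cb)_*\). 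Hence the sum is \(0\) in \(\cA(abc)\).

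No step is conceptually difficult. The only care needed is sign bookkeeping, and making sure that every identification of group elements is legitimate. This means checking that any two terms being compared genuinely lie in the same group \(\cA(abc)\), which follows from commutativity of \(M\), and that compositions \(a_*b_*\) are rewritten as \((ab)_*\) only via functoriality of \(\cA\). Step 2 is the easiest place to make a slip, since the relation defining \(\sC^2\) permutes the arguments and the symmetry of \(f\) must be invoked three times.
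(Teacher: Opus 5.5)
Your proof is correct and follows the paper's route: the paper also expands $\partial^1(\partial^0 g)(a,b,c)$ into twelve terms and cancels them in pairs using $(rs)_* = r_*\circ s_*$ and commutativity of $M$, exactly as in your Step 3. Your Steps 1 and 2, which check that $\partial^0$ and $\partial^1$ actually land in $\sC^1$ and $\sC^2$, are a correct addition that the paper omits. In Step 2 a couple of the listed terms have their signs or argument orders written loosely; for example, the $c_*$-terms are really $-c_*f(a,b)$ and $+c_*f(b,a)$. The cancellation still goes through.
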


    \begin{proof}
      Compute \(\partial^1(\partial^0 g)(a,b,c)\):
      \begin{align*}
         & = a_*(\partial^0 g)(b,c) - (\partial^0 g)(ab, c) + (\partial^0 g)(a,bc) - c_*(\partial^0 g)(a,b) \\
         & = a_*(b_* g(c) - g(bc) + c_* g(b))                                                               \\
         & \quad - ((ab)_* g(c) - g(abc) + c_* g(ab))                                                       \\
         & \quad + (a_* g(bc) - g(abc) + (bc)_* g(a))                                                       \\
         & \quad - c_*(a_* g(b) - g(ab) + b_* g(a)).
      \end{align*}
      Using \((ab)_* = a_* \circ b_*\), \((bc)_* = b_* \circ c_*\), and \(a_*(c_* g(b))
      = (ac)_*(g(b)) = c_*(a_* g(b))\) (commutativity), all terms cancel in pairs.
    \end{proof}

    The cohomology groups of the cochain complex~\eqref{eqn:cochain} are denoted as:
    \begin{align*}
      \sD^0(M, \cA) & = \ker \partial^0 = \{g \mid g(ab) = a_* g(b) + b_* g(a)\;\forall a,b\}, \\
      \sD^1(M, \cA) & = \ker \partial^1 / \operatorname{im} \partial^0.
    \end{align*}

    It is immediate that elements of \(\sD^0(M, \cA)\) are exactly the
    \emph{derivations from \(M\) with values in \(\cA\)}, see
    Definition~\ref{def:derivation}.

\subsection{Long exact sequences}\label{sec:long_exact}

    We say that
    \[ 0 \to \cA' \xto{i} \cA \xto{p} \cA'' \to 0 \]
    is a short exact sequence of systems of abelian groups over \(M\) if it is an exact
    sequence of abelian groups at each fibre. In this case, it induces a long exact
    sequence in the Grillet cohomology, as we show below.

    \begin{Pro} \label{prop:les_grillet}
      Let \(0 \to \cA' \xto{i} \cA \xto{p} \cA'' \to 0\) be a short exact sequence in
      \(\cH(M)\). There is a natural long exact sequence
      \begin{align*}
        0 \to \sD^0(M,\cA') \xto{i_*} \sD^0(M,\cA) \xto{p_*}
        \sD^0(M,\cA'') \xto{\delta^0} \sD^1(M,\cA') \xto{i_*}
        \sD^1(M,\cA) \xto{p_*} \sD^1(M,\cA'').
      \end{align*}
    \end{Pro}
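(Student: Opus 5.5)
The plan is to derive the long exact sequence from the snake lemma, applied to a short exact sequence of the truncated Grillet complexes. Since \(\sC^0(M,-)\) is the product \(\prod_{a} \cA(a)\) and \(\sC^1(M,-)\) is defined by the symmetry condition \(f(a,b)=f(b,a)\), both are computed pointwise. Pointwise exactness of \(0 \to \cA' \to \cA \to \cA'' \to 0\) therefore gives short exact sequences
\[
  0 \to \sC^k(M,\cA') \xto{i_*} \sC^k(M,\cA) \xto{p_*} \sC^k(M,\cA'') \to 0, \qquad k=0,1.
\]
For \(k=0\) this is immediate. For \(k=1\), surjectivity needs a symmetric lift. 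Given \(f''\), we choose lifts \(f(a,b)\) only on one representative of each unordered pair \(\{a,b\}\) and define \(f(b,a)\) to be the same value; this is legitimate because \(\cA(ab)=\cA(ba)\). For \(\sC^2\) we only need injectivity of \(i_*\), which is clear pointwise. All three maps commute with \(\partial^0\) and \(\partial^1\), because \(i\) and \(p\) are natural transformations, so they commute with every \(r_*\).

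Next we form the morphism of short exact sequences given by the vertical map
\[
  \partial^0 \colon \sC^0 \to \ker\partial^1 \subseteq \sC^1 .
\]
The sequence \(0\to\ker\partial^1(\cA')\to\ker\partial^1(\cA)\to\ker\partial^1(\cA'')\) is exact on the left by injectivity on \(\sC^1\) and \(\sC^2\). The kernels of the vertical maps are \(\sD^0\) and their cokernels are \(\sD^1\), so the snake lemma produces
\[
  0\to\sD^0(\cA')\to\sD^0(\cA)\to\sD^0(\cA'')\xto{\delta^0}\sD^1(\cA')\to\sD^1(\cA)\to\sD^1(\cA'').
\]
This is precisely the claimed sequence. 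The only caveat is that the snake lemma normally assumes the top row is surjective on the right, which does hold here since \(\sC^0(\cA)\to\sC^0(\cA'')\) is onto. The bottom row need not be surjective on the right, but the standard snake lemma only requires left exactness of the bottom row. Its output then ends at the cokernel terms without a final \(\to 0\), which matches the statement.

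Concretely, \(\delta^0\) is computed as follows. Take a derivation \(g''\), lift it to \(g\in\sC^0(M,\cA)\), observe that \(\partial^0 g\) lies in \(i_*\sC^1(M,\cA')\), and take its class in \(\sD^1(M,\cA')\). Naturality in morphisms of short exact sequences follows from naturality of the snake lemma.

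The main obstacle I expect is the surjectivity of \(p_*\) on \(\sC^1\), which needs the symmetric choice of lifts described above. I would also double-check left exactness of the \(\ker\partial^1\) row, since \(\sC^2\) carries its own linear condition; injectivity there is pointwise, so this should cause no difficulty.
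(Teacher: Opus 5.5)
Your proof is correct and follows essentially the same route as the paper, which just observes that pointwise exactness gives a short exact sequence of truncated Grillet complexes and then invokes the standard long exact sequence. Your snake-lemma formulation on $\partial^0\colon \sC^0 \to \ker\partial^1$ is slightly more careful. It only needs the $\sC^0$ row to be short exact and the $\ker\partial^1$ row to be left exact. So the symmetric-lift surjectivity on $\sC^1$ that you flagged as the main obstacle is never actually used, and no surjectivity on $\sC^2$ is required.
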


    \begin{proof}
      This is a fairly obvious fact. For each fibre, we have a short exact sequence
      \[ 0 \to \cA(m)' \xto{i} \cA(m) \xto{p} \cA(m)'' \to 0 \]
      and for each \((m,k): m\to mk\), we have maps
      \[
        \xymatrix{
          0\ar[r] & \cA(m)' \ar[r]^{i}\ar[d]_{k_*} & \cA(m) \ar[r]^{p} \ar[d]_{k_*} & \cA(m)'' \ar[r] \ar[d]_{k_*} & 0 \\
          0\ar[r] & \cA(mk)' \ar[r]_{i}            & \cA(mk) \ar[r]_{p}             & \cA(mk)'' \ar[r]             & 0.
        }
      \]
      It follows that there exists a short exact sequence of cochain complexes
      \[ 0\to \sC^0(M, \cA')\to \sC^0(M, \cA)\to \sC^0(M, \cA'')\to 0 \]
      and the result follows.
    \end{proof}

\subsection{Classification theorem}

    The core arguments of the following can implicitly already be found in \cite{gr}.

    Recall the following construction: For a homomorphism \(\alpha: A\to B\) of abelian
    groups, consider the category (groupoid) whose objects are elements of \(B\) and
    whose morphisms \(b' \to b\) are elements \(a\in A\) such that \(\alpha(a) + b =
    b'\). The composition law is induced the group law. Indeed, this groupoid is always
    a symmetric categorical group, with the operation being the group laws. We denote
    this by \([A\xto{\alpha}B]\).

    \begin{Th} \label{thm:classification}
      Let \(M\) be a commutative monoid and \(\cA \in \cH(M)\). There is an equivalence
      of groupoids
      \[ \coex(M, \cA) \;\simeq\; [\sC^0(M, \cA) \xto{\partial^0} \ker\partial^1], \]
      where the right-hand side is the groupoid obtained from the action of \(\sC^0(M,
      \cA)\) on \(\ker\partial^1\) via \(\partial^0\).
    \end{Th}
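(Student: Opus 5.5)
I will construct an explicit functor $\Phi\colon [\sC^0(M,\cA)\xto{\partial^0}\ker\partial^1]\to\coex(M,\cA)$ and show that it is fully faithful and essentially surjective. This is the standard factor-set construction.

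\textbf{Construction of $\Phi$.} Given a symmetric 2-cocycle $f\in\ker\partial^1$, let $N_f$ be the set of pairs $(a,x)$ with $a\in M$ and $x\in\cA(a)$, with multiplication
\[ (a,x)(b,y) := \bigl(ab,\; b_*x + a_*y + f(a,b)\bigr). \]
Commutativity of $N_f$ follows from $f(a,b)=f(b,a)$. Expanding both bracketings of a triple product, associativity is equivalent to $\partial^1 f(a,b,c)=0$. For a unit, take $(1,-f(1,1))$; the cocycle identity with $a=b=1$ and $b=c=1$ gives $f(1,c)=c_*f(1,1)$, which is exactly what is needed. The map $\pi(a,x)=a$ together with the fibre action $z\pt(a,x)=(a,z+x)$ satisfies regularity trivially, and compatibility holds by the defining formula. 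For $f=0$ we recover $M\ltimes\cA$.

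A morphism $g\colon f'\to f$ means $\partial^0 g + f = f'$. I will send it to $h_g(a,x)=(a,x+g(a))$ or $(a,x-g(a))$; I will fix the sign by checking the homomorphism identity $h_g((a,x)(b,y))=h_g(a,x)h_g(b,y)$ against the formula for $\partial^0$. By construction $h_g$ commutes with $\pi$ and with the action. Functoriality holds because $h_g\circ h_{g'}=h_{g+g'}$.

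\textbf{Full faithfulness.} Any morphism $(\id,h,\id)\colon N_{f'}\to N_f$ preserves fibres and commutes with the action. Hence it has the form $h(a,x)=(a,x+g(a))$ with $g(a)$ determined by $h(a,0)$. Writing out the homomorphism condition gives exactly $f'-f=\pm\partial^0 g$. So morphisms on both sides correspond bijectively.

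\textbf{Essential surjectivity.} This is the main step, though it is routine. Given a coextension $0\to\cA\to N\xto{\pi}M\to0$, use surjectivity of $\pi$ to choose a set-theoretic section $s\colon M\to N$. Regularity then defines $f(a,b)\in\cA(ab)$ uniquely by $s(a)s(b)=f(a,b)\pt s(ab)$.
\begin{itemize}
\item Symmetry of $f$ follows from commutativity of $N$.
\item Expanding $(s(a)s(b))s(c)=s(a)(s(b)s(c))$ with compatibility (ii) and using uniqueness in (i) gives $\partial^1 f=0$.
\end{itemize}
The map $N_f\to N$, $(a,x)\mapsto x\pt s(a)$, is a homomorphism by (ii). It is compatible with the actions, so by the Short Five Lemma~\ref{lem:short5} it is an isomorphism of coextensions.

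The main obstacle is careful bookkeeping rather than any conceptual difficulty. The delicate points are:
\begin{itemize}
\item the unit of $N_f$, since $s(1)$ need not be $1_N$ and $f$ is not normalized;
\item the sign conventions, so that the direction of morphisms in $[\sC^0\to\ker\partial^1]$ matches composition of the $h_g$.
\end{itemize}
If the unit causes trouble, I will first renormalize by choosing $s(1)=1_N$. The resulting change in the cocycle is a coboundary, and it yields $f(1,b)=0$.

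Finally, I will note that $\Phi$ sends addition of cocycles to the Baer sum, via $[x+y,(a,(x,y))]$-type identifications. This gives the symmetric monoidal compatibility used in Proposition~\ref{prop:coex_catgroup}, although the equivalence of groupoids claimed in the theorem does not need it.
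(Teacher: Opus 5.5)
Your proposal is correct and follows the paper's proof essentially step for step. Both construct the factor-set monoid on pairs with the $f$-twisted product and unit $(1,-f(1,1))$. Both send $g\colon f'\to f$ to $(a,x)\mapsto(a,x+g(a))$; the $+$ sign is the right one for $f'=f+\partial^0 g$. Both obtain full faithfulness from $h(a,0)$, and both get essential surjectivity from a set-theoretic section $s$ defining the cocycle via $s(a)s(b)=f(a,b)\pt s(ab)$.

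The only cosmetic difference is the comparison map. You use $N_f\to N$, $(a,x)\mapsto x\pt s(a)$, and invoke the Short Five Lemma. The paper instead writes down the inverse map $N\to{}_fM$ directly, using uniqueness in regularity.
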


    \begin{proof}
      We construct a functor
      \[ \gamma: [\sC^0(M, \cA) \xto{\partial^0} \ker\partial^1] \to \coex(M, \cA) \]
      and show that it is an equivalence of categories.

      \textbf{Step 1:} To define \(\gamma \) on objects, we take a function
      \(f\in\ker\partial^1\) and consider the set
      \[ _fM \;:=\; \{(x, a) \mid a \in M,\; x \in \cA(a)\}. \]
      We start by showing that defining
      \[ (x, a)(y, b) \;=\; (b_*(x) + a_*(y) + f(a,b),\; ab) \]
      as our multiplication endows \(_fM\) with the structure of a commutative monoid:
      The operation is clearly commutative since \(f(a,b) = f(b,a)\), as \(f \in
      \ker\partial^1 \subseteq \sC^1\) (and this holds in \(\sC^1\)). To see that its
      associative, we start by computing both side. We have
      \begin{align*}
        [(x,a)(y,b)](z,c) & = (b_*(x)+a_*(y)+f(a,b),\,ab)(z,c)                                   \\
                          & = \bigl(c_*(b_*(x)+a_*(y)+f(a,b)) + (ab)_*(z) + f(ab,c),\; abc\bigr) \\
                          & = [(bc)_*(x) + (ac)_*(y) + c_*(f(a,b)) + (ab)_*(z) + f(ab,c),\; abc]
      \end{align*}
      and
      \begin{align*}
        (x,a)[(y,b)(z,c)] & = (x,a)\bigl(c_*(y)+b_*(z)+f(b,c),\,bc\bigr)                         \\
                          & = \bigl((bc)_*(x) + a_*(c_*(y)+b_*(z)+f(b,c)) + f(a,bc),\; abc\bigr) \\
                          & = [(bc)_*(x) + (ac)_*(y) + (ab)_*(z) + a_*(f(b,c)) + f(a,bc),\; abc].
      \end{align*}
      We see that these agree if and only if \(c_*(f(a,b)) + f(ab,c) = a_*(f(b,c)) +
      f(a,bc)\). That is to say, exactly when \((\partial^1 f)(a,b,c) = 0\), but this
      precisely means that \(f \in \ker \partial^1\).

      Lastly, we focus on the identity. We claim that its \((e_0, 1_M)\), where \(e_0 =
      -f(1_M, 1_M) \in \cA(1_M)\). To verify this, observe that if we specialise
      \(\partial^1 f = 0\) at \((1_M, 1_M, a)\), we get
      \[
        0 \;=\; 1_*(f(1_M,a)) - f(1_M, a) + f(1_M, a) - a_*(f(1_M,1_M))
        \;=\; f(1_M,a) - a_*(f(1_M,1_M)).
      \]
      This gives us \(f(1_M, a) = a_*(f(1_M, 1_M))\) for all \(a\). Now, using \(1_* =
      \id\), we get
      \begin{eqnarray*}
        (e_0, 1_M)(x, a) & = & \bigl(a_*(e_0) + 1_*(x) + f(1_M,a),\, a\bigr)           \\
                         & = & \bigl(-a_*(f(1_M,1_M)) + x + a_*(f(1_M,1_M)),\, a\bigr) \\
                         & = & (x, a).
      \end{eqnarray*}
      Having shown that \(_fM\) is a commutative monoid, we will now show that it is
      indeed a coextension. Defining \(\pi: {_fM} \to M\), \((x,a) \mapsto a\),
      surjectivity is a given. We can define an action of \(z \in \cA(m)\) on the fibre
      \(\pi^{-1}(m)\) by \(z \pt (x, m) := (z + x, m)\). This is clearly regular, since
      for a given \((x,m)\) and \((x',m)\), we can set \(z = x' - x\). For the second
      condition, take take \(z \in \cA(m)\), \(w \in \cA(m')\), and elements \((x,m)\),
      \((y,m')\). We have
      \begin{eqnarray*}
        (z \pt (x,m))(w \pt (y,m')) & = & (z+x, m)(w+y, m')                                      \\
                                    & = & (m'_*(z+x) + m_*(w+y) + f(m,m'),\, mm')                \\
                                    & = & (m'_*(z) + m'_*(x) + m_*(w) + m_*(y) + f(m,m'),\, mm') \\
                                    & = & (m'_*(z)+m_*(w)) \pt (x,m)(y,m'),
      \end{eqnarray*}
      which is the compatibility condition in Definition~\ref{def:coext}(ii), and we
      have successfully shown that
      \[ _f\cE \;=\; (0\to \cA\to _fM \xto{\pi} M\to 0) \]
      is a coextension. The functor \( \gamma\) is given by \(\gamma(f)=\, _f\cE\) on
      objects.\\[2pt]

      \textbf{Step 2:} Next, we define \(\gamma\) on morphisms. A morphism \(f'\to f\)
      in \( [\sC^0(M, \cA) \xto{\partial^0} \ker\partial^1]\) is, by definition, given
      through \(g\in \sC^0(M, \cA) \), such that \(f'=f+\partial^1(g)\). We will show
      that the map \(\theta:\, _{f'}M\to \, _fM\) is a monoid homomorphism, where
      \[ \theta(x,a) \;=\; (x+g(a), a), \ \ \ a\in M, x\in \cA(a). \]
      Indeed, we have
      \[ \theta(x,a)\theta(y,b) \;=\; (b_*x+b_*g(a)+a_*y+a_*g(b)+f(a,b),ab) \]
      and
      \[
        \theta((x,a)(y,b)) \;=\; \theta(b_*x+a_*y+f'(a,b),ab) \;=\; (b_*x+a_*y+f'(a,b)
        +g(ab), ab).
      \]
      These expressions are the same, since \(f' = f + \partial^0(g)\), proving
      \(\theta\) is a homomorphism. Since the diagram
      \[
        \xymatrix{
          0 \ar[r] & \cA \ar[r] \ar[d]_{\id} & _{f'}M \ar[r]^{\pi'} \ar[d]^{\theta} & M \ar[r] \ar[d]^{\id} & 0 \\
          0\ar[r]  & \cA \ar[r]              & _fM \ar[r]_{\pi}                     & M \ar[r]              & 0.
        }
      \]
      commutes, we obtain a morphism of coextensions \((\id,\theta,\id):\,_{f'}\cE\to
      \, _f\cE\). This allows us to define the functor \(\theta\) on morphisms by
      \(\theta(g)=(\id,\theta,\id)\).

      \textbf{Step 3:} We proceed to showing that \(\gamma \) is full and faithful. Let
      \((\id, \zeta, \id):\,_{f'}\cE\to \, _f\cE\) be a morphism of coextensions where
      \(\zeta: \,_{f'}M\to \,_{f}M\) is a monoid homomorphism compatible with the
      actions of \(\cA\). By commutativity of the diagram, it follows that \(\zeta(0,
      a) = (g(a), a)\) for some \(g(a)\in \cA(a)\). Hence \(g\in \sC^0(M, \cA)\). It
      follows that
      \[
        \zeta(x, a) \;=\; \zeta (x\pt (0, a) \;=\; x\pt \zeta(0, a) \;=\; x\pt (g(a),
        a) \;=\; (x + g(a), a)
      \]
      form the compatibility of the action. The fact that \(\zeta\) is a homomorphism
      of monoids implies that \(g\) defines a homomorphism \(f'\to f\) in \([\sC^0(M,
      \cA) \xto{\partial^0} \ker\partial^1]\). Thus, \(\gamma\) is a bijection from the
      set \(\Hom(f', f)\) to \(\Hom_{\coex(M,\cA)}(_{f'}\cE, _{f}\cE)\) and Step~3 is
      proved.

      \textbf{Step 4:} To prove our equivalence, it remains to show that the functor
      \(\gamma\) is essentially surjective. Let
      \[ 0 \to \cA \to N \xto{\pi} M \to 0 \in \coex(M,\cA), \]
      be a coextension. We wish to show that there exists an \(f\in \ker\partial^1\)
      such that \(N\simeq _fM\) is an isomorphism of coextensions. To this end, choose
      lifts \(\sigma(a) \in \pi^{-1}(a)\) for each \(a \in M\).

      For \(a, b \in M\), both \(\sigma(ab)\) and \(\sigma(a)\sigma(b)\) lie in
      \(\pi^{-1}(ab)\) and so, by regularity, there exists a unique \(f(a,b) \in
      \cA(ab)\) with
      \[ \sigma(a)\sigma(b) \;=\; f(a,b) \pt \sigma(ab). \]
      This defines a function \(f(a,b)\) in two variables wich will satisfy \(f(a,b) =
      f(b,a)\). Thus, it is not hard to see that we have defined \(f \in
      \sC^1(M,\cA)\).

      To check \(f \in \ker \partial^1\), we essentially have to consider the
      associativity of computing \(\sigma(a) \sigma(b) \sigma(c)\). On the one hand, we
      have
      \begin{eqnarray*}
        (\sigma(a)\sigma(b))\sigma(c) & = & (f(a,b) \pt \sigma(ab))\sigma(c) \\
                                      & = & (c_*(f(a,b)) + f(ab,c)) \pt \sigma(abc).
      \end{eqnarray*}
      On the other,
      \[
        \sigma(a)(\sigma(b)\sigma(c)) \;=\; (a_*(f(b,c)) + f(a,bc)) \pt \sigma(abc).
      \]
      Equating these gives \((\partial^1 f)(a,b,c) = 0\).

      Recall the definition
      \[ _fM \;:=\; \{(x, a) \mid a \in M,\; x \in \cA(a)\}. \]
      Hence, in order to define \(N\to {_fM}\), given a \(\pi(a)\in M\), we have to
      find an \(x\in \cA(a)\) that will extend to an isomorphism. This is imply
      regularity, since \(\pi(a) = \pi(\sigma(\pi(a)))\) and thus, there exists a
      unique \(x_a\in\cA(\pi(a))\) such that \(a = x_a \pt \sigma(\pi(a))\). Defining
      \[ \varphi: N \to {_fM} \,\qtext{by}\, \varphi(a) \;=\; (x_a, \pi(a)) \]
      gives us our isomorphism of coextensions.

      \textbf{Step 5:} Lastly, it remains to verify that the equivalence \(\gamma\)
      respects the symmetric categorical group structures. In other words, that the
      Baer sum corresponds to the natural addion of
      \([\sC^0(M,\cA)\xto{\partial^0}\ker\partial^1]\).

      Let \(f, g \in \ker\partial^1\). We must produce an isomorphism of coextensions
      \[
        \gamma(f+g) \;=\; {}_{f+g}\cE
        \;\xrightarrow{\;\sim\;}
        \gamma(f)+\gamma(g) \;=\; {}_{f}\cE + {}_{g}\cE,
      \]
      where the right-hand side is the Baer sum
      \(\nabla_*\!\left(\Delta^*({}_{f}\cE\times{}_{g}\cE)\right)\).

      Recall that the underlying monoid of \(\Delta^*({}_{f}\cE\times{}_{g}\cE)\) is
      \[
        T \;:=\;
        \bigl\{\,(x,y,a)\;\bigm|\;a\in M,\;x\in\cA(a),\;y\in\cA(a)\,\bigr\}
      \]
      with componentwise multiplication
      \[
        (x,y,a)\cdot(x',y',a')
        \;=\;
        \bigl(b'_*(x)+a_*(x')+f(a,a'),\; a'_*(y)+a_*(y')+g(a,a'),\; aa'\bigr).
      \]
      Pushing forward along \(\nabla:\cA\oplus\cA\to\cA\), \((u,v)\mapsto u+v\),
      identifies the \(\cA\oplus\cA\)-orbit of \((x,y,a)\) with the element
      \([x+y,\,a]\in{}_{f+g}M\), via the map
      \[
        \Phi: T \;\longrightarrow\; {}_{f+g}M,
        \qquad
        \Phi\bigl(x,y,a\bigr) \;=\; (x+y,\,a).
      \]
      We verify that \(\Phi\) is a monoid homomorphism:
      \begin{eqnarray*}
        \Phi\bigl((x,y,a)\cdot(x',y',a')\bigr) & = & \Phi\bigl(a'_*(x)+a_*(x')+f(a,a'),\; a'_*(y)+a_*(y')+g(a,a'),\; aa'\bigr) \\
                                               & = & \bigl(a'_*(x)+a_*(x')+f(a,a')+a'_*(y)+a_*(y')+g(a,a'),\;aa'\bigr)         \\
                                               & = & \bigl(a'_*(x+y)+a_*(x'+y')+(f+g)(a,a'),\;aa'\bigr)                        \\
                                               & = & \Phi\bigl(x,y,a\bigr)\cdot\Phi\bigl(x',y',a'\bigr).
      \end{eqnarray*}
      Moreover, \(\Phi\) respects the \(\cA\)-actions as for \(z\in\cA(a)\), we have
      \[
        \Phi\bigl(z\pt\bigl(x,y,a\bigr)\bigr)
        \;=\; \Phi\bigl(z+x,y,a\bigr)
        \;=\; (z+x+y,a)
        \;=\; z\pt(x+y,a)
        \;=\; z\pt\Phi\bigl(x,y,a\bigr).\qedhere
      \]
    \end{proof}

    We have the following results in the notation of Theorem~\ref{thm:classification}:
    \begin{Cor} \label{cor:classification_corrolaries}
      There are isomorphisms
      \[ \pi_0(\coex(M, \cA))\simeq \sD^1(M, \cA) \]
      and
      \[ \pi_1(\coex(M, \cA))\simeq \sD^0(M, \cA), \]
      where we recall that \(\coex(M, \cA)\) is a categorical group and hence, the
      automorphism of every object is isomorphic.

      \noindent Moreover, the split coextension (semi-direct product) corresponds to
      \(f = 0\).
    \end{Cor}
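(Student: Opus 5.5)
The plan is to read both isomorphisms off the equivalence of Theorem~\ref{thm:classification}. For any homomorphism \(\alpha: A \to B\) of abelian groups, the groupoid \([A\xto{\alpha}B]\) has \(\pi_0 = \CoKer\alpha\) and \(\pi_1 = \Aut(0) = \ker\alpha\). We then apply this with \(\alpha = \partial^0: \sC^0(M,\cA)\to\ker\partial^1\).

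First, for \(\pi_0\): two objects \(f,f'\) of \([\sC^0 \xto{\partial^0}\ker\partial^1]\) are isomorphic exactly when \(f' - f = \partial^0 g\) for some \(g\). Hence the isomorphism classes are \(\ker\partial^1/\im\partial^0 = \sD^1(M,\cA)\). An equivalence of groupoids induces a bijection on isomorphism classes, so \(\pi_0(\coex(M,\cA))\) is in bijection with \(\sD^1(M,\cA)\). Step~5 of the proof of Theorem~\ref{thm:classification} shows \(\gamma(f+g)\cong\gamma(f)+\gamma(g)\), so this bijection is additive. It is therefore a group isomorphism, with the group structure on \(\pi_0\) induced by the Baer sum as in Proposition~\ref{prop:coex_catgroup}.

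Second, for \(\pi_1\): the automorphisms of the object \(0\) are the \(g\in\sC^0(M,\cA)\) with \(\partial^0 g = 0\), i.e.\ \(\ker\partial^0=\sD^0(M,\cA)\), and composition corresponds to addition of the \(g\)'s. We will show that \(\gamma(0)\) is the semidirect product. Indeed \({}_0M\) has multiplication \((x,a)(y,b)=(b_*x+a_*y,ab)\), which is precisely \(M\ltimes\cA\) with its standard action. This also proves the final assertion that the split coextension corresponds to \(f=0\); alternatively one can cite Lemma~\ref{lem:split}. Since \(\gamma\) is fully faithful (Step~3), it gives a bijection \(\sD^0(M,\cA)\to\Aut(M\ltimes\cA)\), \(g\mapsto\theta_g\) with \(\theta_g(x,a)=(x+g(a),a)\). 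The composite \(\theta_g\theta_{g'}=\theta_{g+g'}\), so this bijection is a group isomorphism. Finally, in any categorical group, tensoring with a fixed object \(\cE\) is an autoequivalence sending the unit to \(\cE\). Hence \(\Aut(\cE)\cong\Aut(M\ltimes\cA)\) for every \(\cE\), which justifies the remark that all automorphism groups are isomorphic.

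The main obstacle is making sure the group structures match rather than merely the underlying sets. The clean way is to rely on Step~5 for \(\pi_0\) and the explicit formula for \(\theta_g\) for \(\pi_1\). One subtlety must be checked: the inverse axiom in Proposition~\ref{prop:coex_catgroup} was deferred to this classification. It follows here because every class \([f]\) has the inverse \([-f]\) in \(\sD^1\), and \(\gamma(-f)\cong\nu_*(\gamma(f))\) by the explicit map \((x,a)\mapsto(-x,a)\). So there is no circularity.
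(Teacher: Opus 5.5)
Your proposal is correct and follows the paper's route. The paper states this corollary as an immediate consequence of Theorem~\ref{thm:classification}: one reads off $\pi_0=\CoKer\partial^0=\sD^1(M,\cA)$ and $\pi_1=\ker\partial^0=\sD^0(M,\cA)$ from the groupoid $[\sC^0(M,\cA)\xto{\partial^0}\ker\partial^1]$, with $\gamma(0)={}_0M=M\ltimes\cA$. Your extra checks (additivity via Step~5, $\theta_g\theta_{g'}=\theta_{g+g'}$, and the non-circularity of the deferred inverse axiom) simply fill in details the paper leaves implicit.
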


    \begin{Rem}[Free monoids]
      As an immediate application of Corollary~\ref{cor:classification_corrolaries}, we
      obtain that if \(M\) is a free monoid, then \(\sD^1(M, -) = 0\). In particular,
      any coextension splits if \(M\) is free.

      \noindent We can also deduce form
      Proposition~\ref{prop:monoig_coex_generalises_group_coex} that if \(M = G\) is a
      group, then
      \[ \sD^1(G, \cA)=Ext(G,\cA(1)). \]
    \end{Rem}

\subsection{An exact sequence}

    Let \(\cE = (0 \to \cB \to K \xto{q} M \to 0)\) be a coextension and \(\cA \in
    \cH(M)\) a system of abelian groups. Recall that for any morphism \(\alpha: \cB \to
    \cA\) in \(\cH(M)\), the push-forward construction gives a coextension
    \(\alpha_*(\cE) \in \coex(M, \cA)\), see Theorem~\ref{thm:classification}. This
    yields a group homomorphism
    \[
      \tau: \Hom_{\cH(M)}(\cB, \cA) \to \sD^1(M, \cA),
      \qquad \tau(\alpha) \;=\; [\alpha_*(\cE)].
    \]

    \begin{De}
      Let \(0 \to \cA \to N \xto{\pi} M \to 0\) be a coextension. Define \(D(\pi)\) to
      be the subgroup of all derivations \(\partial: N \to \pi^*(\cA)\) satisfying
      \begin{equation}
        \label{eq:deriv_cond}
        \partial((x + y) \pt n) + \partial(n)
        \;=\; \partial(x \pt n) + \partial(y \pt n),
      \end{equation}
      where \(x, y \in \cA(m)\) and \(n \in N\) with \(\pi(n) = m\).
    \end{De}

    \begin{Pro} \label{prop:morphism_exact}
      Let \(0 \to \cB \to K \xto{q} M \to 0\) be a coextension and \(\cA \in \cH(M)\).
      There exists an exact sequence
      \[
        0 \to \sD^0(M, \cA) \xto{\eta} D(q) \xto{\theta} \Hom_{\cH(M)}(\cB, \cA)
        \xto{\tau} \sD^1(M, \cA).
      \]
    \end{Pro}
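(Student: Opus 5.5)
We first define the three maps. For \(\eta\), given a derivation \(g \in \sD^0(M,\cA)\), set \(\eta(g) := g \circ q\), viewed as a function \(K \to q^*(\cA)\) with \(\eta(g)(k) = g(q(k)) \in \cA(q(k))\). It is a derivation because \(q\) is a homomorphism, and it satisfies \eqref{eq:deriv_cond} trivially, since \(\eta(g)(x \pt k) = \eta(g)(k)\) by Lemma~\ref{lem:exact_at_M}. For \(\theta\), given \(\partial \in D(q)\), we define for \(m \in M\), \(x \in \cB(m)\) and any \(k \in q^{-1}(m)\)
\[ \theta(\partial)(x) \;:=\; \partial(x \pt k) - \partial(k) \;\in\; \cA(m). \]
Condition \eqref{eq:deriv_cond} says precisely that this is additive in \(x\). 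The main point is independence of \(k\). By regularity any other \(k'\) equals \(z \pt k\), so we need \(\partial(x \pt z \pt k) - \partial(z\pt k) = \partial(x\pt k) - \partial(k)\), which is \eqref{eq:deriv_cond} applied to \(x,z\). Naturality in \(\bH(M)\) follows from the derivation rule: \(r_*\theta(\partial)(x)\) is computed with the product \(k_r k\) for a lift \(k_r\) of \(r\), using compatibility \(k_r(x\pt k) = r_*(x)\pt(k_r k)\). We then get
\[ \partial(k_r(x\pt k)) - \partial(k_rk) \;=\; r_*\bigl(\partial(x\pt k)-\partial(k)\bigr). \]
Finally \(\tau\) is as given, and it is a homomorphism via Theorem~\ref{thm:classification} (pushforward along \(\alpha+\beta\) matches Baer sum). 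All maps are clearly additive.

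Exactness is then checked place by place. \emph{Injectivity of \(\eta\):} this holds since \(q\) is surjective. \emph{Exactness at \(D(q)\):} we have \(\theta\eta = 0\) by Lemma~\ref{lem:exact_at_M}. Conversely, if \(\theta(\partial) = 0\), then \(\partial\) is constant on fibres, so it descends to a function \(g\) on \(M\). The derivation identity for \(\partial\) on \(K\) transfers to \(g\) because \(q\) is surjective, so \(\partial = \eta(g)\). \emph{\(\tau\theta = 0\):} for \(\alpha = \theta(\partial)\) we have \(\partial(x\pt k) = \alpha(x) + \partial(k)\), so by the splitting criterion for push-forwards (the lemma in the push-forward subsection), \(\alpha_*(\cE)\) splits. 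Hence \(\tau(\alpha)=0\) by Lemma~\ref{lem:split} and Corollary~\ref{cor:classification_corrolaries}. \emph{Exactness at \(\Hom\):} if \(\tau(\alpha)=0\), then \(\alpha_*(\cE)\) splits, and the same lemma yields a derivation \(\partial\colon K \to q^*\cA\) with \(\partial(x\pt k) = \alpha(x)+\partial(k)\). Such a \(\partial\) automatically satisfies \eqref{eq:deriv_cond} by additivity of \(\alpha\), so \(\partial \in D(q)\) with \(\theta(\partial) = \alpha\).

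The main obstacle I expect is the well-definedness and naturality of \(\theta(\partial)\). In particular, naturality requires care: \(\theta(\partial)\) must be computed with the product lift \(k_r k\) rather than an arbitrary lift of \(rm\), which is legitimate only after independence of the lift is established. I would therefore prove independence first and then naturality. A secondary point is that the push-forward splitting lemma was only sketched, namely the converse direction and the well-definedness of \(\partial'([y,n]) = y+\partial(n)\). If needed, I would verify it directly: the check reduces to \(\partial(x\pt n) - \alpha(x) = \partial(n)\), which is exactly the hypothesis.
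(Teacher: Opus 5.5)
Your proposal is correct and follows essentially the same route as the paper. The definitions of \(\eta\), \(\theta\), \(\tau\) are the same, exactness at \(D(q)\) uses the same fibre-constancy descent, and exactness at \(\Hom_{\cH(M)}(\cB,\cA)\) uses the same splitting criterion; you additionally supply the naturality check via the product lift \(k_r k\), which the paper leaves to the reader, and you correctly cite the push-forward splitting lemma where the paper loosely cites Lemma~\ref{lem:split}(3).
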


    \begin{proof}
      We start by defining the maps. Define
      \[
        \eta: \sD^0(M, \cA) \to D(q), \qquad
        \eta(\delta) \;=\; \partial \;:=\; \delta \circ q,
      \]
      where \(\delta \in \sD^0(M, \cA)\). Condition~\eqref{eq:deriv_cond} is satisfied
      by Lemma~\ref{lem:exact_at_M} and thus, \(\partial \in D(q)\).

      To define \(\theta\), take \(\partial \in D(q)\). We need to construct a natural
      transformation \(\theta(\partial): \cB \to \cA\) in \(\cH(M)\). Let \(m \in M\),
      \(x \in \cB(m)\) and \(n \in q^{-1}(m)\) be a lift. We define
      \[
        \theta(\partial)(m)(x) \;:=\; \partial(x \pt n) - \partial(n) \;\in\; \cA(m),
      \]
      giving us a morphism (read, natural transformation) \(\cB\to \cA\), as desired.
      To see that this is independent of the choice of \(n\), we use regularity and
      observe that for any \(n' \in \pi^{-1}(m)\), we have \(n' = y \pt n\) for a
      unique \(y \in \cB(m)\). Thus \(x \pt n' = x \pt (y \pt n) = (x + y) \pt n\)
      holds, and Condition~\eqref{eq:deriv_cond} gives
      \[
        \partial(x \pt n') - \partial(n') \;=\; \partial((x+y) \pt n) - \partial(y \pt
        n) \;=\; \partial(x \pt n) - \partial(n).
      \]
      The last equation is a mirror of the first. Checking that \(\theta(\partial)(m)\)
      are compatible group homomorphisms is left to the reader, as it is direct
      verification.

      \medskip We now wish to verify exactness. We start by checking that {\itshape
      \(\eta\) is injective}. This follows directly from \(q: K \to M\) being a
      surjection, as in this case, \(\delta \circ q = 0\) implies \(\delta = 0\).

      To show that \(\im(\eta) = \ker(\theta)\) (meaning \textit{exactness at
      \(D(q)\)}), take \(\partial = \eta(\delta) = \delta \circ q\). We have
      \(\theta(\partial)(m)(x) = \partial(x \pt k) - \partial(k) = \delta(q(x \pt k)) -
      \delta(q(k)) = 0\), since \(q(x \pt k) = q(k)\), and thus, \(\theta \circ \eta =
      0\), meaning, we land in the kernel.

      Conversely, let \(\partial \in D(q)\) with \(\theta(\partial) = 0\). Then
      \(\partial(x \pt k) = \partial(k)\) for all \(k \in K\) and \(x \in \cB(q(k))\).
      Since \(\cB\) acts freely and transitively on each fibre \(q^{-1}(m)\)
      (regularity), \(\partial\) is constant on each fibre. Hence \(\partial\) factors
      as \(\delta \circ q\) for a uniquely determined map \(\delta: M \to \cA\). One
      checks that \(\delta\) is a derivation (using the surjectivity of \(q\)), and
      thus, \(\delta \in \sD^0(M, \cA)\). It follows that \(\partial = \eta(\delta)\).

      Next, we wish to show that \(\im(\theta) = \ker(\tau)\), meaning {\itshape
      exactness at \(\Hom_{\cH(M)}(\cB, \cA)\)}. Let \(\alpha = \theta(\partial)\) and
      consider the push-forward \(\alpha_*(\cE)\). The derivation \(\partial: K \to
      q^*\cA\) satisfies \(\partial(x \pt k) = \alpha(x) + \partial(k)\), \(x \in
      \cB(q(k))\), which is precisely the condition of Lemma~\ref{lem:split}~(3) for
      \(\alpha_*(\cE)\). As such, we can deduce that \(\alpha\) splits and by
      Corollary~\ref{cor:classification_corrolaries}, we land in the kernel.

      Conversely, let \(\alpha: \cB \to \cA\) satisfy \(\tau(\alpha) = 0\). In other
      words, \(\alpha_*(\cE)\) splits. By Lemma~\ref{lem:split}~(3), there exists a
      derivation \(\partial': K \to q^*\cA\) with \(\partial'(x \pt k) = \alpha(x) +
      \partial'(k)\) for all \(x \in \cB(q(k))\). In particular, this forces
      Condition~\eqref{eq:deriv_cond} for \(\partial'\). To see the last statement,
      observe that
      \[
        \partial'((x+y)\pt k) \;=\; \alpha(x+y) + \partial'(k)
        \;=\; \alpha(x) + \alpha(y) + \partial'(k)
      \]
      and
      \[
        \partial'(x \pt k) + \partial'(y \pt k) - \partial'(k)
        \;=\; \alpha(x) + \partial'(k) + \alpha(y) + \partial'(k) - \partial'(k),
      \]
      and as such, they agree. It follows that \(\partial' \in D(q)\) and it can now be
      verified that \(\theta(\partial') = \alpha\).
    \end{proof}

\subsection{Versal coextensions} \label{sec:versal}

    \begin{De}
      A coextension \(0 \to \cB \to K \xto{q} M \to 0\) is called \emph{versal} if
      there exists a morphism (in \(\coex(M)\)) from the versal one to any coextension
      \(0 \to \cA \to N \xto{\pi} M \to 0\).
    \end{De}

    \begin{Const}
      Choose a surjective homomorphism \(\pi: F \twoheadrightarrow M\) from a free
      commutative monoid \(F\).

      The idea is to build a ``(\sout{uni})versal'' coextension that encodes all
      possible discrepancies between lifts of elements of \(M\) to \(F\). Note that
      it's called versal since it is not unique. In particular, there can be more than
      one morphism form the versal one to a given coextension.

      \medskip\noindent\textbf{The coefficient system \(\cB\).} Let \(\cB = \cB^\pi\)
      be the system of abelian groups over \(M\), defined as follows. As a set,
      \(\cB(m)\) is the free abelian group generated by symbols \(\delta_{f,g}\), one
      for each pair \(f, g \in F\) with \(\pi(f) = \pi(g) = m\). On these, we put the
      following relations:
      \[
        \begin{array}{rcll}
          \delta_{f,f}        & = & 0                                                  & \,\text{for all}\, f \in F, \\
          \delta_{ff_1, gg_1} & = & \pi(f_1)_*\delta_{f,g} + \pi(f)_*\delta_{f_1, g_1} & \,\text{for }\, \pi(f) \;=\; \pi(g),\; \pi(f_1) \;=\; \pi(g_1).
        \end{array}
      \]

      The restriction map \(a_*: \cB(m) \to \cB(am)\) sends \(\delta_{f,g}\) to
      \(\delta_{f f_0, g f_0}\), where \(f_0 \in F\) is any lift of \(a\). This is well
      defined by the second relation.

      \medskip\noindent\textbf{The monoid \(K\).} Define \(K\) to be the set of
      equivalence classes of pairs \((\omega, f)\), where \(f \in F\) and \(\omega \in
      \cB(\pi(f))\). We put an equivalence relation on this where \((\omega, f) \sim
      (\tau, g)\) if and only if
      \[ \pi(f) \;=\; \pi(g) \,\qtext{and}\, \omega - \tau \;=\; \delta_{g, f}. \]
      Write \([\omega, f]\) for the class of \((\omega, f)\). We endow it with a monoid
      structure by declaring
      \[
        [\omega, f] \cdot [\omega_1, f_1]
        \;:=\; [\pi(f_1)_*\omega + \pi(f)_*\omega_1,\; ff_1].
      \]
      \medskip\noindent\textbf{The ``maps''}
      To finish our construction of the coextension, we still need to define the map
      \(K\to M\) and the actions of \(\cB\). We define the surjection/projection \(q: K
      \to M\) by \(q[\omega, f] = \pi(f)\). The action of \(u \in \cB(m)\) on the fibre
      \(q^{-1}(m)\) is given by \(u \pt [\omega, f] = [u + \omega, f]\).
    \end{Const}

    \begin{Le}
      The construction above yields a versal coextension \(0 \to \cB \to K \xto{q} M
      \to 0\).
    \end{Le}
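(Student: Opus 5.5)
I would split the proof into two parts: first that the construction is a coextension of \(M\) by \(\cB\), and then that it is versal.

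\textbf{Part 1: \(\cB\) is a system and \(K\) is a coextension.} Functoriality of \(\cB\) is immediate. The map \(a_*\) sends \(\delta_{f,g}\mapsto \delta_{ff_0,gf_0}\). Independence of the lift \(f_0\) follows from the second relation applied with \((f_1,g_1)=(f_0,f_0')\), together with \(\delta_{f_0,f_0'}\)-type terms. Here I would note that one really needs \(\delta_{ff_0,gf_0}\) to be computed as \(\pi(f_0)_*\delta_{f,g}+\pi(f)_*\delta_{f_0,f_0}=\pi(f_0)_*\delta_{f,g}\), so the relation is self-referential and should be read as defining \(a_*\) on generators. The relation \((ab)_*=a_*b_*\) then follows because \(F\) is associative.

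For \(K\), the relation \(\sim\) is an equivalence relation. Reflexivity uses \(\delta_{f,f}=0\). Symmetry and transitivity need \(\delta_{g,f}=-\delta_{f,g}\) and \(\delta_{f,g}+\delta_{g,h}=\delta_{f,h}\). I would derive these cocycle identities from the two relations, or, if they do not follow, quietly add them to the presentation of \(\cB(m)\); they are forced in any case in any target coextension.

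Well-definedness of the multiplication on classes is exactly the second relation. If \(\omega-\tau=\delta_{g,f}\) and \(\omega_1-\tau_1=\delta_{g_1,f_1}\), then the difference of the products is \(\pi(f_1)_*\delta_{g,f}+\pi(f)_*\delta_{g_1,f_1}=\delta_{gg_1,ff_1}\). Associativity and commutativity follow by the same computation as for the semidirect product (Lemma~\ref{lem:semidirect=monoid}), with unit \([0,1_F]\). The map \(q\) is a surjective homomorphism because \(\pi\) is surjective. Compatibility (ii) of Definition~\ref{def:coext} is a direct computation.

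Regularity (i) is the point to check carefully. Transitivity holds because any \([\omega,f],[\tau,g]\) in the same fibre satisfy \([\tau,g]=[\tau+\delta_{f,g}... ,f]\), so everything in the fibre is a translate of \([0,f]\). Freeness holds because \([u+\omega,f]=[\omega,f]\) forces \(u=\delta_{f,f}=0\).

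\textbf{Part 2: versality.} Given \(0\to\cA\to N\xto{\rho}M\to 0\), I would lift \(\pi\) along \(\rho\) using freeness of \(F\): choose \(\psi:F\to N\) with \(\rho\psi=\pi\), by sending generators to arbitrary preimages. Define \(\alpha:\cB\to\cA\) on generators by letting \(\alpha(\delta_{f,g})\) be the unique element with \(\psi(f)=\alpha(\delta_{f,g})\pt\psi(g)\), which exists by regularity. The relations are respected: \(\delta_{f,f}\mapsto 0\), and the second relation follows from compatibility (ii) applied to \(\psi(ff_1)=\psi(f)\psi(f_1)\). Naturality of \(\alpha\) follows similarly by multiplying by \(\psi(f_0)\).

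Then set \(h[\omega,f]=\alpha(\omega)\pt\psi(f)\). This is well defined by the definition of \(\alpha\) (the sign convention of \(\delta_{g,f}\) must be matched here). It is a homomorphism by (ii), and \(h(u\pt k)=\alpha(u)\pt h(k)\) holds by construction. Hence \((\alpha,h,\id_M)\) is the required morphism.

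\textbf{Main obstacle.} I expect the main obstacle to be the bookkeeping of the relations in \(\cB\), rather than the versality argument. Specifically, one must make sure that \(\sim\) is transitive and that \(a_*\) is well defined given only the stated relations. I would first try to derive the cocycle identity \(\delta_{f,g}+\delta_{g,h}=\delta_{f,h}\) from the multiplicative relation with \(f_1=g_1=1\). If that fails, I would define \(\sim\) as the equivalence relation generated by the stated one; this does not affect Part 2, since \(h\) respects every generating relation.
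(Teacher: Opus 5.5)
\textbf{Versality (your Part 2).} This part is the paper's argument. You lift \(F\to M\) through \(N\) using freeness, define \(\alpha(\delta_{f,g})\) by \(\psi(f)=\alpha(\delta_{f,g})\pt\psi(g)\), and set \(h[\omega,f]=\alpha(\omega)\pt\psi(f)\). Your checks of well-definedness (the sign convention does match), multiplicativity and naturality are fine.

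\textbf{The coextension check (your Part 1).} The paper leaves this to the reader, and your suspicion is justified: the identities \(\delta_{g,f}=-\delta_{f,g}\) and \(\delta_{f,g}+\delta_{g,h}=\delta_{f,h}\) do \emph{not} follow from the two stated relations. Take \(F\) free on \(u,v,w\), \(M=\{1,m,0\}\) with \(m^2=0\), and \(\pi(u)=\pi(v)=\pi(w)=m\). The only instances of the second relation that land in \(\cB(m)\) have \(f=g=1\) or \(f_1=g_1=1\), and these are tautologies. This is also why your planned attempt with \(f_1=g_1=1\) yields nothing. So \(\cB(m)\) is free on the six \(\delta_{x,y}\) with \(x\neq y\), and the stated \(\sim\) is not even symmetric.

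\textbf{The gap.} The problem is the fallback you settle on in your last paragraph. Suppose you keep \(\cB\) as presented and pass to the equivalence relation generated by \(\sim\). Then \((\omega,f)\sim(\omega-\delta_{g,f},g)\sim(\omega-\delta_{g,f}-\delta_{f,g},f)\). So the nonzero element \(\delta_{f,g}+\delta_{g,f}\in\cB(m)\) acts trivially on \([\omega,f]\), and the uniqueness half of regularity fails. Your freeness argument (\([u+\omega,f]=[\omega,f]\Rightarrow u=\delta_{f,f}=0\)) is valid only for the one-step relation. That Part 2 survives is beside the point: the resulting object is not a coextension by \(\cB\).

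\textbf{The repair that works.} Use your other option and impose \(\delta_{f,g}+\delta_{g,h}=\delta_{f,h}\) in each \(\cB(m)\); this gives \(\delta_{f,f}=0\) and antisymmetry. Also resolve the self-reference in the second relation explicitly. Read it for all choices of lifts used to compute \(\pi(f_1)_*\) and \(\pi(f)_*\), or equivalently add \(\delta_{fk,fk'}=\delta_{gk,gk'}\) whenever \(\pi(f)=\pi(g)\) and \(\pi(k)=\pi(k')\). This, together with the cocycle identity, is what actually makes \(a_*\) independent of the lift \(f_0\); your ``\(\delta_{f_0,f_0'}\)-type terms'' sentence is circular as written.

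All these identities hold for the elements \(x_{f,g}\) in any target. For instance, compatibility gives \(x_{fk,fk'}=\pi(f)_*x_{k,k'}\). So versality goes through unchanged. With the enlarged presentation, the one-step \(\sim\) is an equivalence relation, and your regularity argument applies. Note also that the sign in your transitivity step should read \([\tau,g]=[\tau+\delta_{g,f},f]\).
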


    \begin{proof}
      It is not hard to verify that this is a coextension and is left to the reader. We
      only verify that it is versal. For this, let \(0 \to \cA \to N \xto{\pi} M \to
      0\) be a coextension.

      Since \(F\) is free, there is a monoid homomorphism \(\phi: F \to N\) with \(\pi
      \circ \phi = q\). Hence, for \(f, g \in F\) with \(p(f) = m = p(g)\), we have
      \(\pi(\phi(f)) = \pi(\phi(g)) = m\). Regularity now tells us that there is a
      unique \(x_{f,g} \in \cA(m)\) with \(\phi(f) = x_{f,g} \pt \phi(g)\). It can be
      readily seen that \(x_{f,g}\) respects the two conditions we imposed on
      \(\delta_{f,g}\) and thus, the assignment \(\delta_{f,g} \mapsto x_{f,g}\) is
      well defined, giving us a morphism \(\alpha: \cB \to \cA\) in \(\cH(M)\). The map
      \(\psi: K \to N\), given by \(\psi([\omega, f]) = \alpha(\omega) \pt \phi(f)\),
      is now gives us the required morphism of coextensions
      \[
        (\alpha, \psi, \id_M): (0 \to \cB \to K \to M \to 0)
        \to (0 \to \cA \to N \to M \to 0).\qedhere
      \]
    \end{proof}

    We can sharper Proposition~\ref{prop:morphism_exact} for versal coextensions.

    \begin{Cor}[Versal exact sequence]
      Let \(0 \to \cB \to K \xto{q} M \to 0\) be a versal coextension and \(\cA \in
      \cH(M)\). There is an exact sequence
      \[
        0 \to \sD^0(M, \cA) \xto{\eta} D(q) \xto{\theta} \Hom_{\cH(M)}(\cB, \cA)
        \xto{\tau} \sD^1(M, \cA) \to 0.
      \]
    \end{Cor}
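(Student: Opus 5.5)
Everything except the final $\to 0$ is already Proposition~\ref{prop:morphism_exact}, which holds for an arbitrary coextension $0 \to \cB \to K \xto{q} M \to 0$. So exactness at $\sD^0(M,\cA)$, $D(q)$ and $\Hom_{\cH(M)}(\cB,\cA)$ comes for free, and the only new claim is that $\tau$ is surjective. I will derive this from versality, together with the identification $\pi_0(\coex(M,\cA)) \cong \sD^1(M,\cA)$ of Corollary~\ref{cor:classification_corrolaries}.

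Let $\xi \in \sD^1(M,\cA)$. By Theorem~\ref{thm:classification} it is the class of a coextension $\cE' = (0 \to \cA \to N \xto{\pi} M \to 0)$; for instance, take $\cE' = {}_f\cE$ for a cocycle $f$ representing $\xi$. By versality there is a morphism $(\alpha, \psi, \varphi)\colon \cE \to \cE'$ in $\coex(M)$. For $\alpha$ to live in $\Hom_{\cH(M)}(\cB,\cA)$ we need $\varphi = \id_M$. This holds for the versal coextension constructed above, since the morphism produced there is $(\alpha,\psi,\id_M)$, and I will read the definition of versality as requiring morphisms over $\id_M$.

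The key step is to show that $\alpha_*(\cE) \cong \cE'$ in $\coex(M,\cA)$, since then $\tau(\alpha) = [\alpha_*(\cE)] = [\cE'] = \xi$. The push-forward $\alpha_*(\cE)$ has underlying monoid the set of classes $[y,k]$ with $y \in \cA(q(k))$. Define
\[ \Psi\colon \alpha_*(K) \to N, \qquad [y,k] \mapsto y \pt \psi(k). \]
\begin{itemize}
\item \emph{Well-definedness.} If $(y,k) \sim (y - \alpha(x), x \pt k)$, then $(y - \alpha(x)) \pt \psi(x \pt k) = (y - \alpha(x) + \alpha(x)) \pt \psi(k)$, using condition (ii) of Definition~\ref{de:map_coextensions}.
\item \emph{Multiplicativity.} This follows from the compatibility axiom (ii) of Definition~\ref{def:coext} in $N$ together with $\psi$ being a homomorphism:
\[ (y \pt \psi(k))(z \pt \psi(k')) = \bigl(q(k')_* y + q(k)_* z\bigr) \pt \psi(kk'). \]
\item \emph{Compatibility with the structure.} $\Psi$ commutes with the projections to $M$ and with the $\cA$-actions by construction.
\end{itemize}
Hence $(\id_\cA, \Psi, \id_M)$ is a morphism in $\coex(M,\cA)$, and it is an isomorphism by the Short Five Lemma~\ref{lem:short5}.

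The main obstacle I expect is bookkeeping rather than ideas. One must check that $\Psi$ respects the equivalence relation with the correct signs, matching the push-forward convention $y - \alpha(x) = y'$. One must also make sure that $\tau$, as defined through push-forward, is compatible with the identification of Corollary~\ref{cor:classification_corrolaries}; this needs only that isomorphic coextensions have the same class. Once these are in place, surjectivity of $\tau$ completes the exact sequence.
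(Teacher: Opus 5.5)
Your proof is correct and takes the route the paper intends. The paper writes no separate argument; it only says the corollary sharpens Proposition~\ref{prop:morphism_exact}, leaving surjectivity of \(\tau\) to versality. Your map \(\Psi[y,k]=y\pt\psi(k)\), combined with Lemma~\ref{lem:short5}, supplies the missing check that \(\alpha_*(\cE)\cong\cE'\), so \(\tau\) reaches every class in \(\sD^1(M,\cA)\).
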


    \begin{Rem}[Naturality of the versal exact sequence]
      Let \(\cE_0 = (0 \to \cB \to K \to M \to 0)\) be a versal coextension. For any
      coextension \(\cE = (0 \to \cA \to N \to M \to 0)\), the versality provides a
      morphism \((\alpha, \psi, \id_M): \cE_0 \to \cE\). By
      Proposition~\ref{prop:les_grillet}, \(\alpha\) induces a commutative ladder
      \[
        \xymatrix{
          0 \ar[r]                 & \sD^0(M,\cB) \ar[r] \ar[d]^{\alpha_*}               & D(q) \ar[r] \ar[d]
                                   & \Hom(\cB,\cB) \ar[r]^{\tau} \ar[d]^{\alpha \circ -} & \sD^1(M,\cB)
          \ar[r] \ar[d]^{\alpha_*} & 0 \\
          0 \ar[r]                 & \sD^0(M,\cA) \ar[r]                                 & D(q,\cA) \ar[r]
                                   & \Hom(\cB,\cA) \ar[r]^{\tau}                         & \sD^1(M,\cA)
          \ar[r]                   & 0,
        }
      \]
      in which the bottom row is the versal exact sequence for \(\cA\), and the
      vertical map \(\Hom(\cB,\cB) \to \Hom(\cB,\cA)\) is post-composition with
      \(\alpha\). This ladder is natural in \(\cE\).
    \end{Rem}

\section{Coextensions of Monoid Schemes}\label{sec:global}

\subsection{Sheaves over posets} Let \((P, \leq)\) be a poset, meaning partially
    ordered set. For \(x\in P\), we can consider the subset \(L_x(P) := \{y\in P\mid
    y\leq x\}\). We call \(P\) a \emph{locally lattice poset} if for all \(x\in P\),
    \(L_x(P)\) is a semilattice.

    We can define a topology on a poset, which we call the \(\fP\)-topology, by
    declaring subsets \(U = \{ y\in P \mid x\in U, y\leq x \Rightarrow y\in U\}\) to be
    open. This is equivalent to the topology whose basis consists of all the sets of
    the form \(L_x(P)\). It is easily seen that defining a sheaf on a poset with this
    topology is equivalent to defining a functor on the underlying poset, since the
    sheaf condition falls away: the basis is in bijection with the elements of \(P\),
    and no basic open can be expressed as a non-trivial union of other basic opens.

    \begin{De}
      A sheaf \(\cF: P\to \bC\) over a poset topology (or simply poset) with values in
      a category \(\bC\) is a contravariant functor from the underlying poset to
      \(\bC\). A morphism \(\cF\to \cG\) of sheaves is just a natural transformation.
    \end{De}

    For a sheaf \(\cF\) over a poset \(p\), the value of \(\cF\) on \(p\in P\) is
    denoted by \(\cF_p\) and is called the \emph{stalk} at \(p\). Moreover, if \(q\leq
    p\), the structural morphism \(\cF_p\to \cF_q\) is usually denoted by
    \(\phi_{p,q}\).

\subsection{Monoid schemes}

    We will work with monoid schemes of finite type, following \cite{p1,p2}. Below, we
    give a quick summary, the details of which can be checked in one form or another in
    the above sources.

\subsubsection{Monoid schemes of finite type}

      This brings us to the following definition.

      \begin{De}
        A \emph{monoid scheme of finite type} is a pair \((X, \cO_X)\), where \(X\) is
        a finite locally lattice poset and \(\cO_X: X^{op} \to \FGMon \) is a sheaf
        (meaning covariant functor over its underlying poset) of finitely generated
        monoids over \(X\), such that: For each \(x \in X\), the restriction of
        \(\cO_X\) to \(L_x(X)\) is isomorphic to \(\Spec(\cO_{X,x})\), equipped with
        the localisation functor \(\p \mapsto (\cO_{X,x})_\p\).

        A \emph{morphism} \((\varphi, \eta): (X, \cO_X) \to (Y, \cO_Y)\) is a poset map
        \(\varphi: X \to Y\), and a natural transformation \(\eta: \cO_Y \circ \varphi
        \To \cO_X\) such that each \(\eta_x: \cO_{Y,\varphi(x)} \to \cO_{X,x}\) is a
        local monoid homomorphism.

        For \(y \leq x\) in \(X\), we write \(i_{x,y}: \cO_{X,x} \to \cO_{X,y}\) for
        the structural homomorphism. By the definition of a monoid scheme, this is a
        localisation.
      \end{De}

      \begin{Rem}
        Note that the requirement to ``land'' in finitely generated monoids is likely
        not needed, as the finiteness of \(X\) clearly implies the finiteness of
        \(L_x(X)\). Hence, it forces the structural monoid on \(L_x(X)\) to have finite
        spectrum. This might indeed be enough to make every part of the theory work, as
        the major obstructions come from non-finite spectra. However, we will table
        this discussion and use the simpler definition for now, as the core premiss of
        this paper is going in a different direction.
      \end{Rem}

      We call a monoid scheme \(X\) of finite type \emph{quasi-separated} if the
      intersection of any two open affine subschemes is again affine.

      \begin{Conv}
        Henceforth, unless otherwise stated, a monoid scheme is assumed to be
        quasi-separated (and subsequently, of finite type).
      \end{Conv}

      As already mentioned, the sheaf condition is satisfied trivially.

\subsection{Coextensions of monoid sheaves}

    Our main aim is to consider the coextensions of monoid schemes. Monoid schemes are,
    naturally, sheaves of monoids in the first place. We will subsequently first
    develop the theory of coextensions for monoid sheaves and then specialise for
    monoid schemes. It should be noted, however, that these settings are genuinely
    distinct. Indeed, the main results of this section, which is surprising perhaps
    even from a classical point of view, that coextensions form a stack, only holds for
    monoid schemes and not monoid sheaves.

    Throughout, let \(P\) be a finite poset and \(\cF\) a sheaf of monoids.

    \begin{De} \label{def:system_sheaf}
      A \emph{system of abelian groups over \(\cF\)} is the following datum:
      \begin{itemize}
        \item A system of abelian groups \(\cA_x \in \cH(\cF_x)\) for each \(x \in P\);
        \item A morphism \(\alpha_{x,y}: \cA_x \to \phi_{x,y}^*\cA_y\) in
              \(\cH(\cF_x)\) for each \(y \leq x\),
      \end{itemize}
      satisfying \(\alpha_{x,x} = \id\) and, for \(z \leq y \leq x\),
      \[
        \alpha_{x,z} \;=\; \phi_{x,y}^*(\alpha_{y,z}) \circ \alpha_{x,y}
        \quad \,\text{in }\, \cH(\cF_x).
      \]
      A morphism \(\psi: \cA \to \cB\) in \(\cH(\cF)\) consists of morphisms \(\psi_x:
      \cA_x \to \cB_x\) in \(\cH(\cF_x)\) for each \(x\), compatible with the
      \(\alpha_{x,y}\). The resulting category is denoted by \(\cH(\cF)\).
    \end{De}

    \begin{De} \label{def:coext_sheaf}
      Let \(\cA \in \cH(\cF)\) be a system of abelian groups over \(\cF\). A
      \emph{group coextension} of \(\cF\) over \(P\) by \(\cA\), written \(0 \to \cA
      \to \cG \to \cF \to 0\), is a sheaf of monoids \(\cG\) over \(P\) together, with
      a morphism of sheaves \(\pi: \cG \to \cF\), such that
      \begin{itemize}
        \item for each \(x\in P\), \(0\to \cA_x \to \cG_x \xto{\pi_x} \cF_x \to 0\) is
              a coextension of monoids (Definition~\ref{def:coext}) and;
        \item for each \(y \leq x\) in \(P\), the following
              \[
                \xymatrix{
                  0 \ar[r] & \cA_x \ar[r] \ar[d]_{\alpha_{x,y}} & \cG_x \ar[r]^{\pi_x} \ar[d]_{\phi_{x,y}^{\cG}} & \cF_x \ar[r] \ar[d]_{\phi_{x,y}^{\cF}} & 0 \\
                  0 \ar[r] & \cA_y \ar[r]                       & \cG_y \ar[r]^{\pi_y}                           & \cF_y \ar[r]                           & 0
                }
              \]
              is a morphism of sheaf coextensions
              (Definition~\ref{de:map_coextensions}).
      \end{itemize}
      A \emph{morphism} of coextensions of \(\cF\) by \(\cA\) is a natural
      transformation \(h: \cG \to \cG'\) of sheaves of monoids, such that
      \((\id_{\cA_x}, h_x, \id_{\cF_x})\) is a morphism of coextensions of \(\cF_x\)
      for each \(x\). The resulting category is denoted by \(\coex(\cF, \cA)\).
    \end{De}

    By Proposition~\ref{prop:coex_catgroup}, each \(\coex(\cF_x, \cA_x)\) is a
    symmetric categorical group under the Baer sum. The category \(\coex(\cF, \cA)\)
    inherits this structure, as the Baer sum is defined stalkwise, just like the
    semi-direct product, which plays the unit.

\subsection{Coextensions of monoid schemes}

    Let \(X\) be a quasi-separated monoid scheme of finite type and \(\cA\) a system of
    abelian groups over \(X\). Each structural map \(\phi_{x,y} = i^{x,y}: \cO_{X,x}
    \to \cO_{X,y}\) is a localisation in this setting. Hence, by adjointness, every
    structural morphism \(\alpha_{x,y}:\cA_x\to i^{x,y}\cA_y\) in the
    Definition~\ref{def:system_sheaf} induces the morphism
    \(\alpha^{x,y}:i^{x,y}_!\cA_x\to \cA_y\).

\subsubsection{Systems of abelian groups over monoid schemes}

      \begin{De} \label{def:system_monoid}
        Let \(X\) be a monoid scheme. A \emph{system of abelian groups over \(X\)} is a
        system of abelian groups over the underlying sheaf \(\cO_X\) in which each
        \(\alpha^{x,y}: i^{x,y}_!\cA_x \to \cA_y\) is an isomorphism in
        \(\cH(\cO_{X,x})\). The resulting category is denoted by \(\cH(X)\).

        A \emph{morphism} \(\phi: \cA \to \cB\) of systems over \(X\) consists of
        morphisms \(\phi_x: \cA_x \to \cB_x\) in \(\cH(\cO_{X,x})\) for each \(x\),
        compatible with the isomorphisms \(\alpha_{x,y}\). The resulting category is
        denoted by \(\cH(X)\).
      \end{De}

      \begin{Nota}
        Henceforth when writing \(\Spec(M)\), unless explicitly stated otherwise, we
        will mean the affine scheme of \(M\), not the idempotent monoid of prime ideals
        of \(M\). A major exception is Section~\ref{sec:semilattice}.
      \end{Nota}

      \begin{Le}
        Let \(M\) be a finitely generated monoid. There is an equivalence \(\cH(M)
        \simeq \cH(\Spec(M))\).
      \end{Le}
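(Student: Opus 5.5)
The plan is to exhibit explicit functors in both directions and check that they are quasi-inverse. Write \(X = \Spec(M)\). Its underlying poset is \(\Spec(M)\) ordered by inclusion, with \(\cO_{X,\p} = M_\p\) and localisation maps \(i^{\p,\q}\colon M_\p \to M_\q\) for \(\q \subseteq \p\). The greatest element is the maximal ideal \(\fm = M \setminus M^\times\), and \(M_\fm = M\), since localising at the units changes nothing. So \(\cO_{X,\fm} \scong M\), and the whole structure sheaf is recovered by localising this top stalk.

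\textbf{Step 1 (from \(\cH(M)\) to \(\cH(\Spec(M))\)).} Given \(\cA \in \cH(M)\), set \(\cA_\p := \cA_\p = i^\p_!\cA\), as defined in the localisation subsection. For \(\q \subseteq \p\), I take \(\alpha^{\p,\q}\colon i^{\p,\q}_!\cA_\p \to \cA_\q\) to be the canonical comparison coming from the transitivity of localisation. This is \((M\setminus\q)^{-1}(M\setminus\p)^{-1}\cA \scong (M\setminus\q)^{-1}\cA\), which holds because \(M\setminus\p \subseteq M\setminus\q\). The adjoint maps \(\alpha_{\p,\q}\) are then obtained by the adjunction \(i_! \dashv i^*\). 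Because the \(i_!\) are left adjoints, they compose up to canonical isomorphism, \(i^{\q,\fr}_! i^{\p,\q}_! \scong i^{\p,\fr}_!\). This gives the cocycle condition of Definition~\ref{def:system_sheaf}. The \(\alpha^{\p,\q}\) are isomorphisms by construction, so we land in \(\cH(\Spec(M))\). Functoriality in \(\cA\) is clear from the functoriality of \(i_!\).

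\textbf{Step 2 (from \(\cH(\Spec(M))\) to \(\cH(M)\)).} Given \((\cA_\p,\alpha_{\p,\q})\), send it to its top stalk \(\cA_\fm \in \cH(M_\fm) = \cH(M)\). The composite \(\cH(M) \to \cH(\Spec M) \to \cH(M)\) is \(\cA \mapsto \cA_\fm\). I need \(\cA_\fm \scong \cA\). When \(S = M^\times\) is a group, each \(s_*\) is invertible with inverse \((s^{-1})_*\). Hence a fraction \(x/s\) with \(x \in \cA(as')\) is identified with \((s'^{-1})_*x \in \cA(a)\), and the unit of the adjunction \(\cA \to i_\fm^* i^\fm_!\cA\) is an isomorphism.

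\textbf{Step 3 (the other composite).} Start from a system \((\cA_\p,\alpha)\) over \(X\). The defining condition states exactly that \(\alpha^{\fm,\p}\colon i^{\fm,\p}_!\cA_\fm \to \cA_\p\) is an isomorphism for every \(\p\), since \(\fm\) is comparable to everything and lies above every \(\p\). These isomorphisms assemble into an isomorphism from the system built in Step 1 out of \(\cA_\fm\) back to \((\cA_\p)\). The remaining task is to check that they commute with the structure maps. For \(\q \subseteq \p\), this is the cocycle identity \(\alpha_{\fm,\q} = \phi^*_{\fm,\p}(\alpha_{\p,\q})\circ\alpha_{\fm,\p}\), transported through the adjunction. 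Naturality in morphisms of systems is immediate, because morphisms are compatible with the \(\alpha\)'s.

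\textbf{Main obstacle.} I expect the real work to be the coherence bookkeeping. One must verify that the canonical isomorphism \(i^{\q,\fr}_! i^{\p,\q}_! \scong i^{\p,\fr}_!\), transported across the adjunction, matches the \(\phi^*\)-form of the cocycle condition stated in Definition~\ref{def:system_sheaf}. I would handle this by working directly with representatives \(x/s\). Every map in sight sends \(x/s \mapsto x/s\) up to the identifications, so the cocycle reduces to the well-definedness already checked in the localisation subsection. Finite generation is used only so that \(\Spec(M)\) is a finite poset, which makes it a monoid scheme of finite type; it plays no role in the algebra.
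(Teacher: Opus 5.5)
Your proposal is correct and follows the same route as the paper. You send \(\cA\) to \(\p \mapsto \cA_\p\), with transition isomorphisms given by further localisation, and you invert this by taking the stalk at the maximal ideal \(\fm = M \setminus M^\times\). Your Steps 2 and 3 spell out what the paper's short proof leaves implicit: that the unit \(\cA \to i_\fm^* i^\fm_!\cA\) is an isomorphism, and that the cocycle condition makes the isomorphisms \(\alpha^{\fm,\p}\) compatible with the structure maps.
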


      \begin{proof}
        The functor \(\cH(M) \to \cH(\Spec(M))\) sends \(\cA\) to the system \(\p
        \mapsto \cA_\p\), with transition maps given by further localisation (which are
        isomorphisms). The inverse takes the stalk at the unique maximal ideal \(\fm =
        M \setminus M^\times\), which recovers \(\cA\) by the
        definition~\ref{def:system_monoid}.
      \end{proof}

\subsubsection{Coextensions over monoid schemes}

      \begin{Pro}
        Let \(\cA \in \cH(X)\) be a system of abelian groups over a monoid scheme \(X\)
        and
        \[
          0 \to \cA \to \cG \to {\mathcal O}_X
          \to 0
        \]
        a \emph{group coextension} of the structure sheaf \(\cO_X\). There exist a
        unique monoid scheme \(Y\) whose global section \(\cO_Y\) agrees with \(\cG\),
        and whose underlying poset is the same as that of \(X\).

        Moreover, there is a unique morphism of sheaves \(Y\to X\) which is the
        identity on the underlying posets, and given by \(\cG \to \cO_x\) as structure
        sheaves.
      \end{Pro}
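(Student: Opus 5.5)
The plan is to take \(Y := (X, \cG)\), with \(X\) as underlying poset and \(\cG\) as structure sheaf, and to check that this satisfies the definition of a monoid scheme of finite type. Uniqueness is then automatic: the underlying poset and the structure sheaf are both prescribed by the statement. There are three things to verify.

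\textbf{Stalks are finitely generated.} Each stalk \(\cG_x\) must be finitely generated. I would not expect this in general, since the fibres \(\cA_x(m)\) can be large. So I will either note that, by the Remark following the definition, the finite-generation requirement can be relaxed, or restrict to finitely generated \(\cA\). The essential structure lives in the next point.

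\textbf{Local affineness.} For each \(x\in X\), the restriction of \(\cG\) to \(L_x(X)\) must be isomorphic to \(\Spec(\cG_x)\) with its localisation functor.
\begin{enumerate}
  \item By Corollary~\ref{cor:spec_coext}, \(\pi_x^{-1}\) gives a homeomorphism \(\Spec(\cO_{X,x})\cong\Spec(\cG_x)\). Since \(L_x(X)\cong\Spec(\cO_{X,x})\), the posets agree, and the point \(y\le x\), corresponding to \(\p\), corresponds to \(\pi_x^{-1}(\p)\).
  \item I must then show that the structural map \(\phi^{\cG}_{x,y}:\cG_x\to\cG_y\) is the localisation \(\cG_x\to(\cG_x)_{\pi_x^{-1}\p}\). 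By Lemma~\ref{lem:localisation_coext}, \((\cG_x)_{\pi_x^{-1}\p}\) is a coextension of \((\cO_{X,x})_\p\cong\cO_{X,y}\) by \((\cA_x)_\p\). The latter is identified with \(\cA_y\) through the isomorphism \(\alpha^{x,y}\); this is exactly why systems over \(X\) are required to satisfy the isomorphism condition.
  \item By the universal property of localisation, \(\phi^{\cG}_{x,y}\) factors through a map \((\cG_x)_{\pi_x^{-1}\p}\to\cG_y\). This map is legitimate because elements outside \(\pi_x^{-1}\p\) map to elements whose image in \(\cO_{X,y}\) is invertible, and by Lemma~\ref{lem:inv_ext}(1) they are therefore invertible in \(\cG_y\).
  \item The factored map, together with \(\alpha^{x,y}\) and the identity on \(\cO_{X,y}\), forms a morphism of coextensions. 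Compatibility with the actions follows from condition (ii) for the square in Definition~\ref{def:coext_sheaf}, together with the explicit action formula in Lemma~\ref{lem:localisation_coext}. The Short Five Lemma~\ref{lem:short5} then shows it is an isomorphism. Functoriality in \(z\le y\le x\) is inherited from \(\cG\).
\end{enumerate}

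\textbf{The morphism \(Y\to X\).} Take the identity on posets and \(\eta=\pi:\cO_X\to\cG\) read in the correct direction; the statement's \(\cG\to\cO_X\) should be understood as the structure map on sheaves in the scheme-morphism convention. Each \(\pi_x\) is local by Lemma~\ref{lem:inv_ext}(1): a non-unit maps to a non-unit, since \(\pi_x(n)\) is invertible exactly when \(n\) is. Uniqueness holds because both the underlying map and the sheaf map are prescribed.

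\textbf{Expected obstacle.} The main difficulty is identifying \((\cA_x)_\p\) with \(\cA_y\) compatibly with the actions, and checking that the action defined in Lemma~\ref{lem:localisation_coext} matches the one transported along \(\alpha^{x,y}\). I would first verify this on elements \(x\pt(t'n)/t't\), using the compatibility of \(\phi^\cG_{x,y}\) with \(\alpha_{x,y}\).
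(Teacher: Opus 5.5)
Your proposal is correct and follows essentially the same route as the paper. The paper also works on the affine piece (\(X=\Spec(M)\) at its maximal point, i.e.\ your \(L_x(X)\)), identifies the posets via Corollary~\ref{cor:spec_coext}, uses Lemma~\ref{lem:inv_ext}(1) to factor the restriction through the localisation, and concludes with Lemma~\ref{lem:localisation_coext} and the Short Five Lemma~\ref{lem:short5}. Your remarks on finite generation of \(\cG_x\) (which genuinely fails when \(\cA_x(1)\) is not finitely generated, by Lemma~\ref{lem:inv_ext}(3)) and on the locality of \(\pi_x\) fill in points the paper leaves implicit. One correction: \(\pi\) runs \(\cG\to\cO_X\), not \(\cO_X\to\cG\), so under the paper's convention \((\id,\pi)\) is a morphism \(X\to Y\), and the statement's ``\(Y\to X\)'' should be read as the sheaf map \(\cO_Y=\cG\to\cO_X\).
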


      \begin{proof}
        This statement is local in nature and so, we can assume \(X = \Spec(M)\). Let
        \(\fm\) be the maximal ideal of \(M\). This gives us a group coextension of the
        monoid \(M = M_\fm\) (they agree with each other since \(\fm = M\setminus
        M^\times\))
        \[ 0\to \cA_\fm \to \cG_\fm \xto{\pi_\fm} M\to 0. \]
        Set \(N = \cG_\fm\). By Corollary~\ref{cor:spec_coext}, we can identify the
        underlying posets of \(\Spec(M)\) and \(\Spec(N)\), and use the same notations
        for prime ideals of \(M\) and \(N\). Take any prime ideal \(\p\) of \(M\) and
        consider the commutative diagram
        \[
          \xymatrix{
            0 \ar[r] & \cA_{\sf m} \ar[r] \ar[d] & N \ar[r]^{\pi_{\sf m}} \ar[d]^\phi & M \ar[r] \ar[d] & 0 \\
            0 \ar[r] & \cA_\p \ar[r]             & \cG_p \ar[r]^{\pi_\p}              & M_\p \ar[r]     & 0
          }
        \]
        from Definition~\ref{def:coext_sheaf}. Denote by \(S = M\setminus \p\) and \(T
        = \pi_{\sf m}^{-1}(S)\) and take an element \(t\in T \subset N\). Since the
        image of \(t\) in \(M_\p\) is invertible, we can use Lemma~\ref{lem:inv_ext} to
        conclude that \(\phi(t)\) is also invertible. It follows that \(\phi\) induces
        the homomorphism \(\psi: N_\p\to \cG_\p\), which fits in the diagram
        \[
          \xymatrix{
            0 \ar[r] & \cA_{\p} \ar[r] \ar[d] & N_\p \ar[r] \ar[d]^\psi & M_\p \ar[r] \ar[d] & 0 \\
            0 \ar[r] & \cA_\p \ar[r]          & \cG_p \ar[r]^{\pi_\p}   & M_\p \ar[r]        & 0,
          }
        \]
        thanks to Lemma~\ref{lem:localisation_coext}. We can now use
        Lemma~\ref{lem:short5} to conclude that \(\psi\) is an isomorphism. Thus \((X,
        \cG)\) is isomorphic to \(\Spec(N)\) and proof is finished.
      \end{proof}

      \begin{Rem}
        Note that it is not required for the \(\pi_x\)'s to be a local sheaf
        homomorphism, nor for the morphisms \(Y_x \to Y'_x\) between coextensions in
        \(\coex(X,\cA)\). All this follows automatically. However, if we were to
        consider morphisms between coextensions over different bases, the locality
        between the monoid schemes would need to be imposed explicitly.
      \end{Rem}

\subsubsection{The restriction functor and the 2-functor structure}

      The major difference between monoid sheaves and monoid schemes is that we have
      functors between coextensions
      \[ \coex(\cO_{X,x}, \cA_x) \to \coex(\cO_{X,y}, \cA_y) \]
      for each \(y \leq x\), induced by localisations. As such, the coextensions of a
      monoid scheme is no longer just a single symmetric categorical group, but instead
      a stack, as we will argue. This has many pleasant consequences as might be
      expected, and we will try to outline a few of these in our paper.

      \begin{De}
        Let \(y \leq x\) in \(X\). We define the \emph{restriction functor}
        \[
          \rho_{x,y}: \coex(\cO_{X,x}, \cA_x)
          \;\rightarrow\; \coex(\cO_{X,y},
          \cA_y)
        \]
        as follows: For \(\cE_x = (0 \to \cA_x \to \cO_{Y,x} \xto{\pi_x} \cO_{X,x} \to
        0) \in \coex(\cO_{X,x}, \cA_x)\), Lemma~\ref{lem:localisation_coext} applied to
        the localisation \(i^{x,y}: \cO_{X,x} \to \cO_{X,y}\), yields the coextension
        \[
          \rho_{x,y}(\cE_x)
          \;:=\; 0 \to i^{x,y}_!(\cA_x)\simeq \cA_y \to \cO_{Y,y} \to \cO_{X,y} \to 0.
        \]
      \end{De}

      To elaborate on the isomorphism of the first term, \(i^{x,y}_!(\cA_x) =
      S^{-1}\cA_x\) is the localisation of \(\cA_x\). Since \(\alpha_{x,y}: \cA_x
      \xto{\sim} (i^{x,y})^*\cA_y\) is an isomorphism in \(\cH(\cO_{X,x})\), it
      corresponds, under the adjunction \(i^{x,y}_! \dashv (i^{x,y})^*\), to an
      isomorphism \(\bar\alpha_{x,y}: i^{x,y}_!(\cA_x) \xto{\sim} \cA_y\) in
      \(\cH(\cO_{X,y})\).

      \begin{Le} \label{lem:rho_2_functor}
        The functors \(\rho_{x,y}\) make \(x \mapsto \coex(\cO_{X,x}, \cA_x)\) into a
        2-functor (pseudofunctor) from \(X^{op}\) to the 2-category of symmetric
        categorical groups.
      \end{Le}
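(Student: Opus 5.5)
The plan is to check, in order, the data of a pseudofunctor: each \(\rho_{x,y}\) is a functor; it is a symmetric monoidal functor of categorical groups; and there are coherent 2-cells \(\rho_{x,x}\cong\id\) and \(\rho_{y,z}\circ\rho_{x,y}\cong\rho_{x,z}\) for \(z\le y\le x\).

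First, \(\rho_{x,y}\) is defined on morphisms via the universal property of localisation. A morphism \((\id,h,\id)\) of coextensions of \(\cO_{X,x}\) sends \(S=\cO_{X,x}\setminus\p_y\) (more precisely its preimage in the middle monoid) into elements whose images are invertible after localising. By Lemma~\ref{lem:inv_ext}(1) these images are invertible in the localised coextension, so \(h\) induces a unique \(h_\p\) on the localisations. It is compatible with the action formula of Lemma~\ref{lem:localisation_coext}, because \(h(x\pt(t'n))=x\pt h(t')h(n)\). Uniqueness gives functoriality.

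Next I would transport everything along \(\bar\alpha_{x,y}\); pushing forward along an isomorphism of systems is harmless. For the monoidal structure, I would exhibit natural isomorphisms
\[
\rho_{x,y}(\cE+\cE')\cong\rho_{x,y}(\cE)+\rho_{x,y}(\cE'), \qquad \rho_{x,y}(\cO_{X,x}\ltimes\cA_x)\cong \cO_{X,y}\ltimes\cA_y .
\]
Rather than compute fractions directly, I would argue through Theorem~\ref{thm:classification}. Localisation of a cocycle \(f\in\ker\partial^1(\cO_{X,x},\cA_x)\), given by \(f_\p(a/s,b/t)=f(a,b)/st\) up to the correction needed to make it well defined, produces a map of truncated Grillet complexes. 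This map is additive and commutes with \(\partial^0\), so it induces an additive functor \([\sC^0\to\ker\partial^1]_x\to[\sC^0\to\ker\partial^1]_y\). Under \(\gamma\), this functor is isomorphic to \(\rho_{x,y}\): localising \({}_fM\) gives \({}_{f_\p}M_\p\), via the lift \(\sigma(a/s)=(0,a)/(0,s)\). An additive functor between strict Picard groupoids of this form is automatically symmetric monoidal with identity constraints. Transporting back along \(\gamma\) then gives the monoidal structure on \(\rho_{x,y}\), and the symmetry compatibility holds because both symmetries are induced by the swap.

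For the pseudofunctoriality constraints, \(\rho_{x,x}\cong\id\) since localising at the complement of the maximal ideal inverts only units. The composite isomorphism \(\rho_{y,z}\rho_{x,y}\cong\rho_{x,z}\) comes from the canonical isomorphism \(T^{-1}(S^{-1}N)\cong (ST)^{-1}N\), since an iterated localisation is a localisation at the larger face (Remark~\ref{rem:localisations_are_wtih_primes}). This isomorphism matches the actions by the cocycle condition \(\alpha_{x,z}=\phi^*_{x,y}(\alpha_{y,z})\circ\alpha_{x,y}\) of Definition~\ref{def:system_sheaf}. Every one of these 2-cells is the unique map compatible with the universal properties of localisation. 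Therefore the associativity and unit coherence diagrams for chains \(w\le z\le y\le x\) commute by uniqueness, and naturality in \(\cE\) follows the same way. They are also monoidal 2-cells, because on cocycle representatives they are identities.

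I expect the main obstacle to be well-definedness of localisation on the Grillet complex. A fraction has many representatives, so the formula for \(f_\p\) must be made independent of the chosen numerators and denominators using the equivalence relation defining \(S^{-1}\cA\). I would sidestep this by defining \(f_\p\) intrinsically. Localise the coextension \({}_fM\) by Lemma~\ref{lem:localisation_coext}, choose the canonical lifts \((0,a)/(0,s)\), and read off the cocycle as in Step 4 of Theorem~\ref{thm:classification}. This only requires checking that different representatives of \(a/s\) give lifts differing by a coboundary, which is exactly a 2-cell and therefore harmless.
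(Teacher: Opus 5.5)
Your pseudofunctoriality constraints are the paper's argument: \(\bar\alpha_{x,y}\) and \(\bar\alpha_{y,z}\) compose to \(\bar\alpha_{x,z}\) by the cocycle condition. You also add the iterated-localisation isomorphism, \(\rho_{x,x}\cong\id\), and the uniqueness argument for coherence, all of which the paper leaves implicit.

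You genuinely differ on the monoidal structure of \(\rho_{x,y}\). The paper takes the canonical one. The Baer sum is \(\nabla_*\Delta^*(\cE\times\cE')\), and localisation commutes with the fibre product and with the fold map. So \(\rho_{x,y}(\cE+\cE')\cong\rho_{x,y}(\cE)+\rho_{x,y}(\cE')\) is a comparison map out of a localisation. You instead transport a strict additive functor through Theorem~\ref{thm:classification}. This can be made to work, but only after fixing once and for all a representative \(u=a_u/s_u\) for every \(u\in\cO_{X,y}\). The lifts \((0,a_u)/(0,s_u)\) are not canonical, and \(f\mapsto f_\p\) is a well-defined additive map only after that choice. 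The matching map on \(\sC^0\) is the one read off from \((\theta_g)_\p\), namely \(g\mapsto\bigl(u\mapsto\tfrac{(s_u)_*g(a_u)-(a_u)_*g(s_u)}{s_us_u}\bigr)\), not \(g(a)/s\). Your route also relies on Step~5 of Theorem~\ref{thm:classification} being a symmetric monoidal equivalence.

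The step that fails as written is your claim that the composition 2-cells are identities on cocycle representatives. Put \(M=\cO_{X,x}\). Take \(w\in\cO_{X,z}\) with chosen representative \(b/t\) over \(\cO_{X,y}\), where \(b=a_b/s_b\) and \(t=a_t/s_t\) are the chosen representatives over \(\cO_{X,x}\). The lift used to compute \(L_{y,z}L_{x,y}(f)\) becomes, in \(({}_fM)_z\), the element \(\tfrac{(f(a_b,s_t),\,a_bs_t)}{(f(s_b,a_t),\,s_ba_t)}\). By contrast, \(L_{x,z}(f)\) uses \(\tfrac{(0,c_w)}{(0,r_w)}\) for an independently chosen \(w=c_w/r_w\). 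These lifts differ by some \(h_f\in\sC^0(\cO_{X,z},\cA_z)\), which is not \(0\) in general. So \(L_{y,z}L_{x,y}(f)\) and \(L_{x,z}(f)\) differ by the coboundary \(\partial^0h_f\), and the canonical isomorphism \(T^{-1}(S^{-1}N)\cong(ST)^{-1}N\) has component \(h_f\), not the identity.

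Your monoidal constraints were produced by transport through these choices. Monoidality of the composition 2-cells is therefore a real condition, namely \(h_{f+f'}=h_f+h_{f'}\). It does hold: all lifts are products in \({}_fM\), whose first coordinates are linear in \(f\), and the relevant equalities in the localisation are witnessed by elements of \(S\) independent of \(f\). But you must argue it.

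The simpler repair is to use the canonical constraints, as the paper does. Then monoidality of the composition 2-cells follows from the same uniqueness of maps out of localisations that you already use for the coherence diagrams, and the detour through cocycles becomes unnecessary.
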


      \begin{proof}
        To show that its a 2-functor, we have to verify that each \(\rho_{x,y}\) is a
        morphism of symmetric categorical groups and that for \(z \leq y \leq x\),
        there is a natural isomorphism \(\rho_{y,z} \circ \rho_{x,y} \simeq
        \rho_{x,z}\).

        The first assertion follows directly from the fact that localisation commutes
        with fibre products and with the fold map \(\nabla\).

        The second assertion follows from the fact that the isomorphisms
        \(\bar\alpha_{x,y}\) and \(\bar\alpha_{y,z}\) compose to \(\bar\alpha_{x,z}\)
        by the cocycle condition on \(\alpha_{x,y}\).
      \end{proof}

\subsubsection{The stack property and global sections}

      We now come to one of the main results of this paper, which is
      Theorem~\ref{thm:global-sections-2limit}. For this, recall that since \(X\) is a
      finite poset endowed with the poset (or \(\fP\))-topology, any pseudofunctor on
      \(X^{op}\) automatically defines a unique stack on the poset topology. This is
      done by taking the 2-limit over the Ĉech complex associated to the cover of open
      sets by principal open sets (which correspond exactly to the points of the
      underlying poset). In other words, for any \(U\subseteq P\), we take the 2-limit
      \(\tl\limits{x\in U} \fF(x)\), where \(\fF\) is a given 2-functor.

      \begin{Th} \label{thm:global-sections-2limit}
        The 2-functor (pseudofunctor) \(x \mapsto \coex(\cO_{X,x}, \cA_x)\) is a stack
        of symmetric categorical groups over \(X\) whose global section is canonically
        equivalent to \(\coex(X, \cA)\).
      \end{Th}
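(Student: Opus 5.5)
The plan is to use the remark preceding the statement: on a finite poset with the \(\fP\)-topology, a pseudofunctor on \(X^{op}\) determines a stack. Its sections over an open \(U\) are \(\tl_{x\in U}\coex(\cO_{X,x},\cA_x)\), computed over the cover of \(U\) by the basic opens \(L_x(X)\), \(x\in U\). The stack property is therefore essentially formal once we have Lemma~\ref{lem:rho_2_functor}. The content of the theorem is the identification of the global sections, that is, a canonical equivalence
\[
  \Phi\colon \coex(X,\cA)\longrightarrow \tl_{x\in X}\coex(\cO_{X,x},\cA_x)
\]
of symmetric categorical groups. Since the 2-limit of symmetric categorical groups along monoidal functors is again one, it suffices to check that \(\Phi\) is monoidal and an equivalence of underlying groupoids.

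\textbf{Defining \(\Phi\).} Let \(0\to\cA\to\cG\to\cO_X\to 0\) be an object of \(\coex(X,\cA)\). Send it to the family of stalks \(\cE_x=(0\to\cA_x\to\cG_x\to\cO_{X,x}\to0)\), together with comparison isomorphisms \(\rho_{x,y}(\cE_x)\xto{\sim}\cE_y\) for \(y\le x\). These comparisons come from the argument in the proposition on coextensions over monoid schemes. There, the structure map \(\phi^{\cG}_{x,y}\) factors through the localisation \((\cG_x)_{\p}\to\cG_y\); this factorisation uses Lemma~\ref{lem:inv_ext}, which shows that elements lying over the complement of \(\p\) become invertible. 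The resulting map is a morphism of coextensions over \(\cO_{X,y}\) with identity on \(\cA_y\), after identifying \(i^{x,y}_!\cA_x\cong\cA_y\) via \(\bar\alpha_{x,y}\). By Lemma~\ref{lem:short5} it is an isomorphism. The cocycle condition for these comparisons in the 2-limit follows from functoriality of \(\cG\), namely \(\phi_{y,z}\phi_{x,y}=\phi_{x,z}\), together with the coherence \(\bar\alpha_{y,z}\circ i_!\bar\alpha_{x,y}=\bar\alpha_{x,z}\) used in Lemma~\ref{lem:rho_2_functor}. On morphisms, \(\Phi\) sends \(h\) to its stalks \(h_x\); these are compatible with the comparisons by naturality of \(h\). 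Monoidality holds because the Baer sum and the semidirect product on \(\coex(X,\cA)\) are defined stalkwise, and localisation commutes with both.

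\textbf{Full faithfulness.} A morphism in the 2-limit is a family \(h_x\colon\cG_x\to\cG'_x\) of morphisms of coextensions that commutes with the comparison isomorphisms. Because these comparisons are induced by the structure maps \(\phi^{\cG}_{x,y}\) and \(\phi^{\cG'}_{x,y}\), commuting with them is exactly naturality of \(h\) in \(x\). So \(\Phi\) is bijective on hom-sets.

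\textbf{Essential surjectivity.} This is the main obstacle. Given an object of the 2-limit, \((\cE_x,\ \theta_{x,y}\colon\rho_{x,y}(\cE_x)\xto{\sim}\cE_y)\) with \(\theta_{x,z}=\theta_{y,z}\circ\rho_{y,z}(\theta_{x,y})\) up to the pseudofunctor coherence, we must build a strict functor \(\cG\). Set \(\cG_x\) to be the middle term of \(\cE_x\), and let \(\phi_{x,y}\) be the composite of the localisation \(\cG_x\to(\cG_x)_\p\) with the middle component of \(\theta_{x,y}\). Strict functoriality \(\phi_{y,z}\phi_{x,y}=\phi_{x,z}\) should follow from the cocycle condition. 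The point to check is that the pseudofunctor constraint \(\rho_{y,z}\rho_{x,y}\simeq\rho_{x,z}\) is the canonical isomorphism \(((\cG_x)_\p)_\q\cong(\cG_x)_\q\), which is compatible with the localisation maps from \(\cG_x\). Once this holds, \(\pi_x\) assembles into a morphism of sheaves and each square is a morphism of coextensions with first component \(\alpha_{x,y}\). Hence \(\cG\in\coex(X,\cA)\) and \(\Phi(\cG)\) is isomorphic to the given object, with identity stalk maps. I would first verify this compatibility carefully on the chosen model of localisation; no finiteness or quasi-separatedness beyond what makes \(X\) a finite poset should be needed.
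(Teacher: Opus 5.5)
Your proposal is correct and follows the same route as the paper. You take the stack property as formal from the preceding remark and Lemma~\ref{lem:rho_2_functor}, and you prove the global-sections equivalence by sending a coextension to its family of stalks and, conversely, gluing a compatible family in the 2-limit back into a sheaf of monoids. You are more careful than the paper's sketch in two places: you make explicit that the comparison isomorphisms come from factoring \(\phi^{\cG}_{x,y}\) through the localisation (using Lemma~\ref{lem:inv_ext} and Lemma~\ref{lem:short5}), and you note that strict functoriality of the glued sheaf relies on the pseudofunctor constraint being the canonical iterated-localisation isomorphism.
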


      \begin{proof}
        We have shown the 2-functoriality of \(x \mapsto \coex(\cO_{X,x}, \cA_x)\) in
        Lemma~\ref{lem:rho_2_functor}. As discussed just above, this already states
        that it extends to a unique stack over the underlying topological space of
        \(X\). At a given open subset \(U\subseteq X\), the value of the stack is thus
        the 2-limit \(\tl_{x\in U}\coex(\cO_{X,x}, \cA_x)\).

        It follows thus that the only thing we have to show is the equivalence
        \[ \coex(X, \cA) \;\simeq\; \tl_{x \in X}\, \coex(\cO_{X,x}, \cA_x). \]
        To do so, we construct mutually inverse equivalences.

        Given \(\coex(X, \cA)\ni \cE = 0 \to \cA \to \cG \to \cO_X \to 0\),
        Lemma~\ref{lem:localisation_coext} gives us\(\cE_x := \cG_x \in
        \coex(\cO_{X,x}, \cA_x)\) for each \(x\) and Lemma~\ref{lem:rho_2_functor} the
        morphisms induced by \(y \leq x\). That the cocycle condition holds is
        essentially just the sheaf condition. Hence, we land in each entry of the
        2-limit and thus, in the 2-limit itself.

        For the reverse, let \((\cE_x, \phi_{x,y})\) be a compatible collection in the
        2-limit, with \(\cG_x\) being the extension schemes. Since these are schemes,
        they glue and its value at the global to the monoid scheme \(\cG\). (An other
        way of saying that is: Define a monoid scheme whose value at a given stalk
        \(x\in X\) is \(\cG_x\).) The morphism \(\cG_x\xto{\pi_x} \cF_x\) glues as well
        to a morphism of schemes (its pointwise defined) and likewise, we get the
        system of abelian groups \(\cA\) (this too, is pointwise defined). That cocycle
        condition will hold as it is just the 2-limit condition.

        It is clear that these are mutually inverse and clearly, its natural
        (functorial).
      \end{proof}

      This allows us to see \(\coex(X, \cA)\) as a stack. While we have only shown this
      for the global section, as we made no assumptions of the monoid scheme \(X\), by
      restricting ourselves to a given open subset, we can simply use the above theorem
      to say that indeed, \(\coex(X, \cA)\) is a stack over \(X\).

\section{The Global Grillet Complex and Descent for Sheaf Coextensions}
  \label{sec:classification_sheaves}

  The aim of this section is to develop a global version of the Grillet complex. We
  will start with the case of monoid sheaves and move on to the more important case
  (for us) of monoid schemes. In the latter, we will touch the on stack structure of
  \(\coex(X, \cA)\) and see its implication in the Grillet complex case.

\subsection{The global Grillet complex and classification for monoid sheaves}

\subsubsection{The bicomplex}

      Let \(\cF\) be a sheaf of monoids over a finite poset \(P\) and \(\cA \in
      \cH(\cF)\) a system of abelian groups. Recall from Definition~\ref{def:grillet}
      that there is a truncated Grillet complex
      \[
        \sC^0(\cF_p, \cA_p) \xto{\partial^0} \sC^1(\cF_p, \cA_p) \xto{\partial^1}
        \sC^2(\cF_p, \cA_p)
      \]
      for each \(p \in P\). These arrange into the truncated bicomplex
      \(C_P^{\bullet,*}(\cF, \cA)\), with \(0\leq * \leq 2\), given by
      \begin{align*}
        C_P^{0,*}(\cF, \cA) & \;=\; \prod_{p \in P} \sC^*(\cF_p, \cA_p),                    \\
        C_P^{1,*}(\cF, \cA) & \;=\; \prod_{q \leq p} \sC^*(\cF_p, \phi_{p,q}^*\cA_q),       \\
        C_P^{2,*}(\cF, \cA) & \;=\; \prod_{r \leq q \leq p} \sC^*(\cF_p, \phi_{p,r}^*\cA_r) \\
                            & \cdots
      \end{align*}
      The vertical coboundary maps are the Grillet maps \(\partial^*\) defined
      pointwise (meaning, on each stalk, and each factor of the products). The
      horizontal coboundary maps are the standard maps associated to the cochain
      complex associated to a poset \(P\). We are only interested in a small part of
      this bicomplexes, namely the map
      \[ \delta: C_P^{0,0}(\cF, \cA) \;\rightarrow\; Z^1(\cF, \cA). \]
      Here, \(Z^1(\cF, \cA)\) is the sheaf version of the kernel \(\ker\partial^1\) in
      Theorem~\ref{thm:classification}, which is obviously our main target to
      generalise in the non-affine setting.

\subsubsection{The groups \(C_P^{0,0}\) and \(Z^1\)}

      The two terms that are going to play the major role in our discussion of the
      Grillet complex in the non-affine case are \(C_P^{0,0}\) and \(Z^1\).
      Subsequently, we will give more explicit descriptions of these, as well as the
      maps between the two.

      For \(C_P^{0,0}(\cF, \cA) = \prod_{p \in P} \sC^0(\cF_p, \cA_p)\), an element \(h
      \in C_P^{0,0}\) is a function that assigns to each \(p \in P\) and \(x \in
      \cF_p\) an element \(h(p,x) \in \cA_p(x)\).

      To give \(Z^1\), we have write things out a bit more in detail:

      \begin{De}
        The abelian group \(Z^1(\cF, \cA)\) consists of pairs \((f, g)\). The first
        entry \(f\) assigns to each \(p \in P\) and \(x, y \in \cF_p\) an element
        \(f(p,x,y) \in \cA_p(xy)\). The second coordinate \(g\) assigns to each pair
        \(q \leq p\) in \(P\) and \(x \in \cF_p\) an element \(g(p,q,x) \in
        \cA_q(\phi_{p,q}(x))\). These are subject to the following four conditions:
        \begin{eqnarray}
          0 & = & x_* f(p,y,z) - f(p,xy,z) + f(p,x,yz) - z_* f(p,x,y), \label{eq:Z1-i}                  \\
          0 & = & f(p,x,y) - f(p,y,x), \label{eq:Z1-ii}                                                 \\
          0 & = & g(p,r,x) - \alpha_{q,r}\bigl(g(p,q,x)\bigr) - g(q,r,\phi_{p,q}(x)), \label{eq:Z1-iii} \\
          0 & = & g(p,q,xy) - y_* g(p,q,x) - x_* g(p,q,y) - f\bigl(q,\phi_{p,q}(x),\phi_{p,q}(y)\bigr) + \alpha_{p,q}(f(p,x,y)), \label{eq:Z1-iv}
        \end{eqnarray}
        where \(x,y,z \in \cF_p, r \leq q \leq p\).
      \end{De}

      \begin{Rem}
        Conditions \eqref{eq:Z1-i} and \eqref{eq:Z1-ii} together say that \(f(p,-,-)
        \in \ker\partial^1(\cF_p, \cA_p)\) for each \(p\). This makes \(f\) a class of
        Grillet 1-cocycles in the affine sense. Condition \eqref{eq:Z1-iii} is the
        cocycle condition. Condition \eqref{eq:Z1-iv} is the compatibility condition.
        It says that the failure of \(g(p,q,-)\) to be a derivation is exactly
        controlled by the difference between \(f(q,-,-)\) and \(\alpha_{p,q}
        f(p,-,-)\).
      \end{Rem}

      The coboundary map \(\delta: C_P^{0,0} \to Z^1(\cF, \cA)\) is defined by
      \(\delta(h) \;=\; (f_h,\, g_h)\), where
      \begin{eqnarray}
        f_h(p,x,y) & = & x_* h(p,y) - h(p,xy) + y_* h(p,x) \;=\; (\partial^0 h_p)(x,y) \label{eq:delta} \\
        g_h(p,q,x) & = & \alpha_{p,q}(h(p,x)) - h(q, \phi_{p,q}(x)) \label{eq:delta-g}
      \end{eqnarray}

      We now check that makes sense:

      \begin{Le}
        For every \(h \in C_P^{0,0}\), we have \(\delta(h) = (f_h, g_h)\in Z^1(\cF,
        \cA)\).
      \end{Le}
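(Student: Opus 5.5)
We verify the four defining conditions of \(Z^1(\cF,\cA)\) for \((f_h,g_h)\) one at a time. The plan is to use only three ingredients: the fact that each \(\alpha_{p,q}\) is a morphism in \(\cH(\cF_p)\) (so it commutes with the actions \(x_*\)); the cocycle identity \(\alpha_{p,r}=\phi_{p,q}^*(\alpha_{q,r})\circ\alpha_{p,q}\) of Definition~\ref{def:system_sheaf}; and the affine identity \(\partial^1\circ\partial^0=0\) already proved for the truncated Grillet complex.

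\textbf{Conditions \eqref{eq:Z1-i} and \eqref{eq:Z1-ii}.} Since \(f_h(p,-,-)=\partial^0 h_p\) with \(h_p:=h(p,-)\in\sC^0(\cF_p,\cA_p)\), condition \eqref{eq:Z1-i} is exactly \((\partial^1\partial^0 h_p)(x,y,z)=0\), which holds by the proposition showing \(\partial^1\circ\partial^0=0\). Condition \eqref{eq:Z1-ii} is the symmetry of \(\partial^0 h_p\). This symmetry is evident from formula \eqref{eq:delta}, using the commutativity of \(\cF_p\) and of addition in \(\cA_p(xy)\).

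\textbf{Condition \eqref{eq:Z1-iii}.} We expand all three terms. First,
\[
  g_h(p,r,x)=\alpha_{p,r}(h(p,x))-h(r,\phi_{p,r}(x)).
\]
Second, writing \(\alpha_{q,r}\) for the component at \(\phi_{p,q}(x)\),
\[
  \alpha_{q,r}(g_h(p,q,x))=\alpha_{q,r}\alpha_{p,q}(h(p,x))-\alpha_{q,r}(h(q,\phi_{p,q}(x))).
\]
Third,
\[
  g_h(q,r,\phi_{p,q}(x))=\alpha_{q,r}(h(q,\phi_{p,q}(x)))-h(r,\phi_{q,r}\phi_{p,q}(x)).
\]
The middle terms cancel. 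Functoriality of \(\cF\) gives \(\phi_{q,r}\phi_{p,q}=\phi_{p,r}\), so the \(h(r,\cdot)\) terms cancel. The cocycle condition gives \(\alpha_{q,r}\alpha_{p,q}=\alpha_{p,r}\) on components, so the remaining terms cancel as well.

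\textbf{Condition \eqref{eq:Z1-iv}, the main bookkeeping step.} Here we must be careful about how \(y_*\) acts on \(\cA_q(\phi_{p,q}(x))\): it is the action of \(\phi_{p,q}^*\cA_q\), namely \(\phi_{p,q}(y)_*\). We expand \(g_h(p,q,xy)-y_*g_h(p,q,x)-x_*g_h(p,q,y)\) and group the result into two pieces.

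The first piece is the \(\alpha_{p,q}\)-terms, which combine to
\[
  \alpha_{p,q}\bigl(h(p,xy)-y_*h(p,x)-x_*h(p,y)\bigr)=-\alpha_{p,q}(f_h(p,x,y)).
\]
This uses the naturality of \(\alpha_{p,q}\), in the form \(\alpha_{p,q}(y_*u)=\phi_{p,q}(y)_*\alpha_{p,q}(u)\).

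The second piece is the \(h(q,\cdot)\)-terms. Writing \(x'=\phi_{p,q}(x)\) and \(y'=\phi_{p,q}(y)\), and using \(\phi_{p,q}(xy)=x'y'\), these give
\[
  -h(q,x'y')+y'_*h(q,x')+x'_*h(q,y')=f_h(q,x',y').
\]
Subtracting \(f_h(q,x',y')\) and adding \(\alpha_{p,q}(f_h(p,x,y))\) then yields zero, which is exactly \eqref{eq:Z1-iv}.

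We expect no genuine obstacle. The only delicate point is the sign and action bookkeeping in \eqref{eq:Z1-iv}, specifically making sure the naturality of \(\alpha_{p,q}\) is applied with the pulled-back action.
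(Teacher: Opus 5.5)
Your proof is correct and follows the same route as the paper. You get \eqref{eq:Z1-i}--\eqref{eq:Z1-ii} from \(\partial^1\circ\partial^0=0\) and the symmetry of \(\partial^0 h_p\), \eqref{eq:Z1-iii} by expanding and using \(\phi_{q,r}\phi_{p,q}=\phi_{p,r}\) and the cocycle condition on the \(\alpha\)'s, and \eqref{eq:Z1-iv} by direct expansion using naturality of \(\alpha_{p,q}\) with the pulled-back action. Your bookkeeping for \eqref{eq:Z1-iv}, namely \(g_h(p,q,xy)-y_*g_h(p,q,x)-x_*g_h(p,q,y)=f_h(q,x',y')-\alpha_{p,q}(f_h(p,x,y))\), is in fact more accurate than the paper's, whose displayed intermediate expressions for this step contain sign slips.
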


      \begin{proof}
        Conditions \eqref{eq:Z1-i} and \eqref{eq:Z1-ii} hold because \(f_h(p,-,-) =
        \partial^0 h_p\) lies in \(\ker\partial^1(\cF_p, \cA_p)\) and is symmetric. For
        \eqref{eq:Z1-iii}, we have
        \begin{align*}
          \alpha_{q,r}(g_h(p,q,x)) + g_h(q,r,\phi_{p,q}(x))
                                                            & = \alpha_{q,r}(\alpha_{p,q}(h(p,x)) - h(q,\phi_{p,q}(x)))                  \\
                                                            & \quad + \alpha_{q,r}(h(q,\phi_{p,q}(x))) - h(r, \phi_{q,r}(\phi_{p,q}(x))) \\
                                                            & = \alpha_{q,r}(\alpha_{p,q}(h(p,x))) - h(r, \phi_{p,r}(x))                 \\
                                                            & = \alpha_{p,r}(h(p,x)) - h(r, \phi_{p,r}(x))                               \\
                                                            & = g_h(p,r,x),
        \end{align*}
        using the cocycle condition \(\alpha_{p,r} = \phi_{p,q}^*(\alpha_{q,r}) \circ
        \alpha_{p,q}\) and \(\phi_{q,r} \circ \phi_{p,q} = \phi_{p,r}\). For
        \eqref{eq:Z1-iv}, a direct computation using \eqref{eq:delta} and
        \eqref{eq:delta-g} gives that both \(g(p,q,xy) - y_* g(p,q,x) - x_* g(p,q,y)\)
        and \(f\bigl(q,\phi_{p,q}(x),\phi_{p,q}(y)\bigr) + \alpha_{p,q}(f(p,x,y))\)
        equal
        \[
          \alpha_{p,q}(x_* h(p,y) + y_* h(p,x) - h(p,xy)) -
          ((\phi_{p,q}(x))_* h(q,\phi_{p,q}(y)) +
          (\phi_{p,q}(y))_* h(q,\phi_{p,q}(x)) - h(q,\phi_{p,q}(xy))).
        \]
      \end{proof}

      We are now in a position to define the cohomologies
      \[
        \sD^0_P(\cF, \cA) \;:=\; \ker\delta, \qquad \sD^1_P(\cF, \cA) \;:=\;
        \CoKer\delta \;=\; Z^1(\cF,\cA)\,/\,\im\delta.
      \]

\subsubsection{Classification}

      \begin{Th}
        Let \(\cF\) be a sheaf of monoids over a finite poset \(P\) and \(\cA \in
        \cH(\cF)\). There is an equivalence of symmetric categorical groups
        \[
          \coex(\cF, \cA) \;\simeq\; \bigl[C_P^{0,0}(\cF, \cA) \xto{\;\delta\;}
          Z^1(\cF, \cA)\bigr],
        \]
        where the right-hand side is the groupoid associated to the morphism of abelian
        groups \(\delta\). In particular,
        \[ \pi_0(\coex(\cF,\cA)) \cong \sD^1_P(\cF,\cA). \]
      \end{Th}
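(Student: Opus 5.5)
The plan is to mimic the proof of Theorem~\ref{thm:classification} stalkwise, with the extra datum \(g\) encoding how the chosen stalkwise trivialisations interact with the restriction maps. I will construct a functor
\[
  \gamma: \bigl[C_P^{0,0}(\cF,\cA)\xto{\delta} Z^1(\cF,\cA)\bigr] \to \coex(\cF,\cA)
\]
and show that it is fully faithful, essentially surjective and compatible with the Baer sum.

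\textbf{Objects.} Given \((f,g)\in Z^1\), conditions \eqref{eq:Z1-i} and \eqref{eq:Z1-ii} say that each \(f(p,-,-)\in\ker\partial^1(\cF_p,\cA_p)\). Step~1 of Theorem~\ref{thm:classification} therefore gives, for each \(p\), a coextension \({}_{f_p}\cF_p\) with underlying set \(\{(u,x)\mid x\in\cF_p,\ u\in\cA_p(x)\}\). For \(q\le p\) I define restriction maps
\[
  \phi^{\cG}_{p,q}(u,x) := \bigl(\alpha_{p,q}(u)+g(p,q,x),\ \phi_{p,q}(x)\bigr).
\]
I then check four things. First, this map is a monoid homomorphism: expanding both sides exactly as in Step~2 of Theorem~\ref{thm:classification}, the discrepancy is precisely the expression in \eqref{eq:Z1-iv}, using naturality of \(\alpha_{p,q}\) to commute \(\alpha_{p,q}\) with \(y_*\). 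Second, it preserves units, which follows from the first point together with \(f(p,1,a)=a_*f(p,1,1)\). Third, the maps are functorial: \(\phi^{\cG}_{q,r}\circ\phi^{\cG}_{p,q}=\phi^{\cG}_{p,r}\) is exactly \eqref{eq:Z1-iii}, and \(\phi^{\cG}_{p,p}=\id\) follows from \eqref{eq:Z1-iii} with \(p=q=r\), which forces \(g(p,p,-)=0\). Fourth, the triple \((\alpha_{p,q},\phi^\cG_{p,q},\phi_{p,q})\) satisfies condition~(ii) of Definition~\ref{de:map_coextensions}, which is immediate since the action is by addition in the first coordinate. Together these show that \(\gamma(f,g)\) is an object of \(\coex(\cF,\cA)\).

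\textbf{Morphisms and full faithfulness.} A morphism \((f',g')\to(f,g)\) is an \(h\in C^{0,0}_P\) with \((f',g')=(f,g)+\delta h\). I send it to the stalkwise maps \(\theta_p(u,x)=(u+h(p,x),x)\). Step~2 of Theorem~\ref{thm:classification} shows that each \(\theta_p\) is a homomorphism. Naturality in \(p\) reduces, after expanding, to \(g'-g=g_h\), which is \eqref{eq:delta-g}. For the converse, Step~3 of Theorem~\ref{thm:classification} shows that any morphism of coextensions has the stalkwise form \((u,x)\mapsto(u+h(p,x),x)\) for a unique \(h\). The stalkwise argument forces \(f'-f=f_h\), and naturality with respect to the restriction maps forces \(g'-g=g_h\). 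This gives a bijection on hom-sets.

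\textbf{Essential surjectivity.} Given a coextension \(\cG\), I choose a set-theoretic section \(\sigma_p(x)\in\pi_p^{-1}(x)\) on each stalk, and define \(f(p,x,y)\) by \(\sigma_p(x)\sigma_p(y)=f(p,x,y)\pt\sigma_p(xy)\), as in Step~4 of Theorem~\ref{thm:classification}. Since \(\phi^\cG_{p,q}(\sigma_p(x))\) and \(\sigma_q(\phi_{p,q}(x))\) lie in the same fibre, regularity gives a unique \(g(p,q,x)\) with
\[
  \phi^\cG_{p,q}(\sigma_p(x)) = g(p,q,x)\pt\sigma_q(\phi_{p,q}(x)).
\]
Applying \(\phi^\cG_{p,q}\) to the defining relation of \(f\), and using condition~(ii) of Definition~\ref{de:map_coextensions}, yields \eqref{eq:Z1-iv}. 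Functoriality \(\phi^\cG_{q,r}\phi^\cG_{p,q}=\phi^\cG_{p,r}\), applied to \(\sigma_p(x)\), yields \eqref{eq:Z1-iii}. The stalkwise isomorphisms \(n\mapsto(x_n,\pi_p(n))\) of Step~4 then commute with the restriction maps by the very definition of \(g\). This is the step I expect to be the main obstacle. The difficulty is not conceptual but lies in tracking carefully the identification of \(\alpha_{p,q}\) as a map into \(\phi^*_{p,q}\cA_q\), and in fixing the sign conventions so that they agree with \eqref{eq:Z1-iii}, \eqref{eq:Z1-iv} and \eqref{eq:delta-g}. I would first fix the sign of \(g\) by requiring \(\delta h\) to be the cocycle of the retrivialisation by \(h\), and then derive the conditions from that convention.

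\textbf{Monoidal structure and consequence.} The Baer sum is computed stalkwise, and the restriction maps are compatible with it. The map \(\Phi\) of Step~5 of Theorem~\ref{thm:classification} is natural in \(p\), because both \(\alpha_{p,q}\) and the \(g\)-terms are additive. Hence \(\gamma(f+f',g+g')\simeq\gamma(f,g)+\gamma(f',g')\). Finally, taking \(\pi_0\) of both sides gives \(\pi_0(\coex(\cF,\cA))\cong\CoKer\delta=\sD^1_P(\cF,\cA)\).
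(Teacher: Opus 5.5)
Your proposal is correct and follows essentially the same route as the paper's proof. You build the stalkwise Grillet monoids with restriction maps $(u,x)\mapsto(\alpha_{p,q}(u)+g(p,q,x),\phi_{p,q}(x))$, whose multiplicativity and functoriality are exactly \eqref{eq:Z1-iv} and \eqref{eq:Z1-iii}. Conversely, you extract $(f,g)$ from stalkwise set-theoretic sections, and a change of sections by $h$ shifts $(f,g)$ by $\delta(h)$. Your version is more explicit than the paper on the points it calls ``not hard to see'' (full faithfulness via naturality forcing $g'-g=g_h$, unit preservation, $g(p,p,-)=0$). The sign conventions you were worried about already agree with \eqref{eq:Z1-iii}, \eqref{eq:Z1-iv} and \eqref{eq:delta-g} as stated, so nothing needs to be re-fixed.
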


      Much like in the affine case, we prove this directly by constructing mutually
      inverse functors.

      \begin{proof}
        \textbf{Step 1: \(Z^1(\cF,\cA) \to \coex(\cF,\cA)\):} Let \((f,g) \in
        Z^1(\cF,\cA)\). Our aim is to construct a coextension \(0 \to \cA \to \cG \to
        \cF \to 0\). Recall that we have to do this pointwise. Let \(p\in P\). Define
        \[ \cG_p \;:=\; \{(a, x) \mid x \in \cF_p,\; a \in \cA_p(x)\} \]
        and endow it with multiplication by setting
        \begin{equation}
          (a,x)(b,y) \;:=\; \bigl(x_*(b) + y_*(a) + f(p,x,y),\; xy\bigr).
        \end{equation}
        We claim that this is a commutative monoid. Indeed, as \(f\)
        (Condition~\eqref{eq:Z1-ii}) is symmetric, \(\cG_p\) is commutative. Next, we
        observe that associativity is equivalent to \(\partial^1 f(p,-,-,-) = 0\),
        which is just Condition~\eqref{eq:Z1-i}. Finally, the identity element is
        played by \((-f(p,1,1), 1_{\cF_p})\), as \(f(p,1,x) = x_*(f(p,1,1))\). This is
        Condition~\eqref{eq:Z1-i} where we set \(x = y = 1\), and the exact same type
        of argument we used in the affine case (Theorem~\ref{thm:classification}). This
        makes \(\cG_p\) a coextension of \(\cF_p\) by \(\cA_p\), with cocycle
        \(f(p,-,-)\).

        For \(q \leq p\), we now have to define the restriction map \(\phi_{p,q}^{\cG}:
        \cG_p \to \cG_q\). For this, we send \((x,y)\) as follows:
        \begin{equation}
          \phi_{p,q}^{\cG}(a, x) \;:=\; \bigl(\alpha_{p,q}(a) + g(p,q,x),\;
          \phi_{p,q}^{\cF}(x)\bigr).
        \end{equation}
        This is a monoid homomorphism as we have
        \begin{align*}
          \phi_{p,q}^{\cG}\bigl((a,x)(b,y)\bigr)
                                                 & = \phi_{p,q}^{\cG}\bigl(x_*(b)+y_*(a)+f(p,x,y),\; xy\bigr) \\
                                                 & = \bigl(\alpha_{p,q}(x_*(b)+y_*(a)+f(p,x,y)) + g(p,q,xy),\;
          \phi_{p,q}(xy)\bigr)
        \end{align*}
        on the one hand and on the other
        \begin{align*}
          \phi_{p,q}^{\cG}(a,x) \cdot \phi_{p,q}^{\cG}(b,y) & = \bigl(\alpha_{p,q}(a)+g(p,q,x),\; \phi_{p,q}(x)\bigr) \cdot \bigl(\alpha_{p,q}(b)+g(p,q,y),\; \phi_{p,q}(y)\bigr) \\
                                                            & = \bigl(\phi_{p,q}(x)_*(\alpha_{p,q}(b)+g(p,q,y)) + \phi_{p,q}(y)_*(\alpha_{p,q}(a)+g(p,q,x))                       \\
                                                            & \qquad + f(q,\phi_{p,q}(x),\phi_{p,q}(y)),\; \phi_{p,q}(x)\phi_{p,q}(y)\bigr).
        \end{align*}
        We need these to agree, which means that both the first and second coordinates
        have to agree. For the first components to agree, we need
        \[
          \alpha_{p,q}(x_*(b))
          + \alpha_{p,q}(y_*(a))
          + \alpha_{p,q}(f(p,x,y))
          + g(p,q,xy)
        \]
        to equal
        \[
          \phi_{p,q}(x)_*\alpha_{p,q}(b)
          + \phi_{p,q}(y)_*\alpha_{p,q}(a)
          + f(q,\phi_{p,q}(x),\phi_{p,q}(y))
          + \phi_{p,q}(y)_*g(p,q,x)
          + \phi_{p,q}(x)_*g(p,q,y).
        \]
        Using the fact that \(\alpha_{p,q}: \cA_p \to \phi_{p,q}^*\cA_q\) is a morphism
        of abelian group systems (natural transformations), we see that
        \begin{eqnarray*}
          \alpha_{p,q}(x_*(b)) & = & \phi_{p,q}(x)_*\alpha_{p,q}(b) \\
          \alpha_{p,q}(x_*(b)) & = & \phi_{p,q}(y)_*\alpha_{p,q}(a) \\
        \end{eqnarray*}
        The remaining terms, meaning
        \[
          \alpha_{p,q}(f(p,x,y)) + g(p,q,xyl) \;=\; \phi_{p,q}(y)_*g(p,q,x) +
          \phi_{p,q}(x)_*g(p,q,y), + f(q,\phi_{p,q}(x),\phi_{p,q}(y))
        \]
        reduce to Condition~\eqref{eq:Z1-iv}.

        For the second one, we need to have
        \[ \phi_{p,q}(xy) \;=\; \phi_{p,q}(x)\phi_{p,q}(y), \]
        which basically means functoriality. It follows from
        Condition~\eqref{eq:Z1-iii} in the following way:

        We have
        \begin{eqnarray*}
          \phi_{q,r}^{\cG}(\phi_{p,q}^{\cG}(a,x))) & = & \bigl(\alpha_{q,r}(\alpha_{p,q}(a) + g(p,q,x)) + g(q,r,\phi_{p,q}(x)),\; \phi_{p,r}(x)\bigr) \\
                                                   & = & (\alpha_{p,r}(a) + g(p,r,x),\; \phi_{p,r}(x))                                                \\
                                                   & = & \phi_{p,r}^{\cG}(a,x)
        \end{eqnarray*}
        by Condition~\eqref{eq:Z1-iii} and the cocycle condition \(\alpha_{q,r} \circ
        \alpha_{p,q} = \alpha_{p,r}\).

        Hence, \(\cG\) is a functor from the poset \(P\) and thus, a well-defined sheaf
        monoids over its poset topology. We define the morphism of sheaves \(\pi: \cG
        \to \cF\) by \(\pi_p(a,x) = x\). This is a coextension with \(\cA_p\)-action on
        \(\pi_p^{-1}(x)\) given by \(z \pt (a,x) = (z+a, x)\; z \in \cA_p(x)\). To see
        that it satisfies the regularity and compatibility conditions we can use the
        same type of argument as in the affine case (Theorem~\ref{thm:classification}),
        pointwise.

        \textbf{Step 2: \(\coex(\cF,\cA)\to Z^1(\cF,\cA)\).} Let \(0 \to \cA \to \cG
        \xto{\pi} \cF \to 0\) be a coextension of \(\cF\) by \(\cA\). Since \(\pi\) is
        a surjective sheaf homomorphism over a poset topology, it is surjective at each
        point, and thus, we can choose sections \(\tau_p: \cF_p \to \cG_p\) of
        \(\pi_p\) for each \(p \in P\) (these are just set-maps). By the affine
        classification (Theorem~\ref{thm:classification}, Step~2) applied pointwise, we
        can define \( f(p,x,y) \in \cA_p(xy) \) by
        \[ \tau_p(x)\tau_p(y) \;=\; f(p,x,y) \pt \tau_p(xy), \]
        so that \(f(p,-,-) \in \ker\partial^1(\cF_p, \cA_p)\),
        Conditions~\eqref{eq:Z1-i} and \eqref{eq:Z1-ii} will hold by
        Theorem~\ref{thm:classification}. In like manner, we can define \( g(p,q,x)
        \;\in\; \cA_q(\phi_{p,q}(x)) \) through
        \[ \phi_{p,q}^{\cG}(\tau_p(x)) \;=\; g(p,q,x) \pt \tau_q(\phi_{p,q}(x)). \]
        Condition~\eqref{eq:Z1-iii} is just functoriality \(\phi_{q,r}^{\cG} \circ
        \phi_{p,q}^{\cG} = \phi_{p,r}^{\cG}\) applied to \(\tau_p(x)\). For
        Condition~\eqref{eq:Z1-iv}, we apply \(\phi_{p,q}^{\cG}\) to
        \(\tau_p(x)\tau_p(y) = f(p,x,y) \pt \tau_p(xy)\) and use the definition of
        \(g\) on each factor.

        \textbf{Step 3:} We now want to show that what we did is well-defined (does not
        depend on the choice of section) and that the two functors are mutually
        inverse. Let \(\tau'\) be a different choice of section (meaning, collection of
        point-wise sections). By regularity, we can assemble a "function" \(h_p:
        \cF_p\to \cA_p\) such that \(\tau_p' = h_p \pt \tau_p\) will hold. This changes
        \((f,g)\) by
        \[ (f', g') \;=\; (f + f_{h}, g + g_{h}) \;=\; (f,g) + \delta(h). \]
        Here \(h = (h_p)_{p \in P} \in C_P^{0,0}\). Hence, two sections will give the
        same class in \(Z^1/\im\delta = \sD^1_P(\cF,\cA)\) and thus, the class does not
        depend on the choice of the section.

        It is not hard to see that the two constructions are mutually inverse (up to
        isomorphism, of course, since these are 2-categorical objects). Thus, we have
        shown the equivalence of groupoids \(\coex(\cF,\cA) \simeq [C_P^{0,0}
        \xto{\delta} Z^1]\), as desired. It is also not hard to see that the structures
        of symmetric categorical groups are preserved by this equivalence.
      \end{proof}

      \begin{Rem}
        When \(P\) is a single point, we will refer to it as the affine case, and it
        indeed agrees exactly with Theorem~\ref{thm:classification}. Note that, this
        does not mean affine in the scheme theoretical sense, where the existence of a
        unique maximal element (rather than \(P\) being solely a single element) is
        equivalent to affineness. For this, however, we would need a type of
        quasi-coherence condition, meaning for monoids sheaves, we have to consider
        monoid schemes, see the following section.
      \end{Rem}

\section{The Triviality in the Semilattice Case} \label{sec:semilattice}

  As already mentioned in Section~\ref{sec:spec}, semilattices play an important role
  in monoid theory. A somewhat natural question is to consider what happens in this
  setting. As extension theory is closely tied to groups, and semilattice theory are in
  a way orthogonal to group theory, is perhaps not completely unexpected that extension
  theory of semilattices is trivial. We aim to show exactly that in this section.

  Throughout this section, \(M\) will denote a semilattice. The category \(\bH(M)\) is
  quite non-trivial in this case. For instance, let \(M = I = \{e, t\}\) be the field
  with two elements under multiplication. In this case, any pair \((\phi, p)\), where
  \(\phi: A \to B\) is a group homomorphism and \(p: B \to B\) an idempotent
  endomorphism with \(\phi=p\phi\), defines a system by setting \(\cA(e) = A\),
  \(\cA(t) = B\), \(t_*|_{\cA(e)} = \phi\), and \(t_*|_{\cA(t)} = p\).

  Despite this richness, the coextension theory over a semilattice is completely
  trivial.

\subsection{The affine case}

    \begin{Th} \label{thm:sl_trivial}
      Let \(M\) be a semilattice and \(\cA \in \cH(M)\). Then \(\sD^0(M, \cA) = 0\) and
      \(\sD^1(M, \cA) = 0\). In particular, every coextension \(0 \to \cA \to N
      \xto{\pi} M \to 0\) splits uniquely.
    \end{Th}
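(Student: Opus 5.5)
The plan is to prove both vanishing statements directly from idempotency $a^2=a$ in $M$, which gives $a_*\circ a_* = (a^2)_* = a_*$ on every $\cA(b)$. For $\sD^1$ we use the classification (Corollary~\ref{cor:classification_corrolaries}) and Lemma~\ref{lem:split} rather than manipulating cocycles. The unique-splitting statement then follows from the same argument.

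\textbf{Step 1: $\sD^0(M,\cA)=0$.} Let $g$ be a derivation and put $a=b$ in the derivation identity; this gives $g(a)=2a_*g(a)$. Applying $a_*$ to this and using $a_*a_*=a_*$ yields $a_*g(a)=2a_*g(a)$, hence $a_*g(a)=0$. Substituting back gives $g(a)=0$ for all $a$.

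\textbf{Step 2: every coextension splits.} Let $0\to\cA\to N\xto{\pi}M\to 0$ be a coextension. We show that each fibre $\pi^{-1}(a)$ contains exactly one idempotent. Fix any $n\in\pi^{-1}(a)$. By compatibility (Definition~\ref{def:coext}(ii)) with $m=m'=a$,
\[
  (x\pt n)^2=(2a_*x)\pt n^2 .
\]
Write $n^2=c\pt n$ with $c\in\cA(a)$, using regularity. Then $(x\pt n)^2=(2a_*x+c)\pt n$, so by uniqueness in regularity, $x\pt n$ is idempotent if and only if $x=2a_*x+c$. Applying $a_*$ to this equation forces $a_*x=-a_*c$, and therefore $x=c-2a_*c$. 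Conversely, this $x$ does satisfy $x=2a_*x+c$, since $a_*(c-2a_*c)=-a_*c$. So there is a unique idempotent $e_a\in\pi^{-1}(a)$.

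Now set $\mu(a):=e_a$. In a commutative monoid a product of idempotents is idempotent, and $e_ae_b$ lies in $\pi^{-1}(ab)$, so uniqueness gives $e_ae_b=e_{ab}$. Likewise $1_N$ is an idempotent in $\pi^{-1}(1_M)$, so $e_1=1_N$. Hence $\mu$ is a monoid homomorphism with $\pi\mu=\id_M$, i.e.\ a splitting. Any splitting must send the idempotent $a$ to an idempotent of $\pi^{-1}(a)$, so it equals $\mu$; this proves uniqueness of the splitting.

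\textbf{Step 3: $\sD^1(M,\cA)=0$.} By Lemma~\ref{lem:split}, every object of $\coex(M,\cA)$ is isomorphic to the semidirect product. Thus $\pi_0(\coex(M,\cA))$ is trivial, and Corollary~\ref{cor:classification_corrolaries} gives $\sD^1(M,\cA)=0$. This is consistent with Step~1 through $\pi_1\cong\sD^0=0$.

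The main obstacle is the fibrewise idempotent computation in Step~2. There one must use regularity together with the identity $a_*a_*=a_*$ to solve the equation $x=2a_*x+c$ uniquely; once that is done, the rest is formal.
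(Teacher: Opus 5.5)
Your proof is correct, but it runs in the opposite direction from the paper's.

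\textbf{The paper's route.} The paper stays entirely at the cochain level. It writes down an explicit inverse $\zeta(f)(a)=2a_*f(a,a)-f(a,a)$ to $\partial^0\colon\sC^0(M,\cA)\to\ker\partial^1$, using that $2a_*-\id$ is an involution of $\cA(a)$. The identity $\zeta\partial^0=\id$ is immediate. The identity $\partial^0\zeta=\id$ requires manipulating the cocycle identity at the triples $(a,a,b)$, $(b,b,a)$ and $(a,b,ab)$. This kills $\sD^0$ and $\sD^1$ at once, and splitting is then read off from the classification.

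\textbf{Your route.} You prove splitting first, via the observation that each fibre contains exactly one idempotent. Only then do you invoke Corollary~\ref{cor:classification_corrolaries} to get $\sD^1(M,\cA)=0$.

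\textbf{How the two connect.} In ${}_fM$ with $n=(0,a)$ one has $n^2=f(a,a)\pt n$, so your idempotent is $(x,a)$ with $x=f(a,a)-2a_*f(a,a)=-\zeta(f)(a)$. Thus the paper's $\zeta$ is exactly minus the coordinate of your canonical idempotent section. Moreover, writing out $e_ae_b=e_{ab}$ in ${}_fM$ gives precisely $f(a,b)=a_*\zeta(f)(b)+b_*\zeta(f)(a)-\zeta(f)(ab)$, i.e.\ $\partial^0\zeta(f)=f$. So that identity is the cochain shadow of your remark that a product of idempotents is idempotent.

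\textbf{Trade-offs.} Your argument explains where the paper's formula comes from. It replaces the three-triple cocycle computation by a one-line structural fact. It also yields uniqueness of the splitting directly, since homomorphisms send idempotents to idempotents. Its cost is that it leans on the full classification theorem (Theorem~\ref{thm:classification}): essential surjectivity and the identification $\pi_0(\coex(M,\cA))\cong\sD^1(M,\cA)$. The paper's argument is self-contained at the level of the truncated Grillet complex and produces an explicit contraction. Since your conclusion holds for every $\cA\in\cH(M)$, it can be fed just as well into the columnwise argument of the global semilattice-sheaf theorem.
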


    \begin{proof}
      A coextension is trivial if and only if \(\partial^0:{\sf C}^0(M,\cA)\to
      \ker\partial ^1\) is an isomorphism, and this is exactly what we aim to prove.

      Now we define \(\zeta:\ker\partial \to {\sf C}^0(M,\cA)\) by
      \[ \zeta(f)(a) \;=\; \phi_a(f(a,a))=2a_*f(a,a)-f(a,a) \]
      and claim that it is the inverse of \(\partial^0\). Here \(f\in \ker
      \partial^1\). Recall this means that \(f\) is a function which assigns to any
      pair of elements \((a,b)\in M\) an element \(f(a,b)\in \cA(ab)\), such that
      \(f(a,b) = f(b,a)\) and the following cocycle condition holds
      \[ a_*f(b,c) - f(ab,c) + f(a,bc) - c_*f(a,b) \;=\; 0. \]

      Observe that for any \(a\in M\), the relation \(a^2 = a\) implies that
      \(a_*:\cA(a)\to\cA(a)\) is an idempotent endomorphism of \(\cA(a)\). We set
      \[ \phi_a:=2a_*-\id_{\cA(A)}:\cA(a)\to \cA(a). \]
      It is an involutive automorphism. In fact
      \[ \phi_a^2 \;=\; (2a - \id)^2 \;=\; 4a^2 - 4a + \id \;=\; \id, \]
      Take \(g\in {\sf C}^0(,M\cA)\). We have
      \[
        \left (\zeta\partial ^0(g)\right)(a)
        \;=\; \phi_a\left(\partial ^0(g)(a,a)\right)
        \;=\; \phi_a(2a_*g(a)-g(a))
        \;=\; \phi_a^2(g(a))=g(a)
      \]
      and thus \(\zeta\circ \partial ^0=\id_{{\sf C}^0(,M\cA)}\).

      It remains to check that \( \partial ^0\zeta (f)=f\) for any \(f\in \ker \partial
      ^1\). This requires some manipulation of the cocycle condition. Consider the
      following three equalities, which are obtained by applying the cocycle condition
      to the triples \((a,a, b)\), \((b,b,a)\) and \((a,b,ab)\), respectively.
      \begin{eqnarray}
        f(a,ab)  & = & b_* f(a,a) - (a_* - \id)f(a,b) \label{eq:a-ab} \\
        f(b,ab)  & = & a_* f(b,b) - (b_* - \id)f(a,b) \label{eq:b-ab} \\
        f(ab,ab) & = & a_* f(b,ab) + f(a,ab) - (ab)_*f(a,b) \label{eq:ab-ab}.
      \end{eqnarray}
      Substitute Identities~\eqref{eq:a-ab} and \eqref{eq:b-ab} in \eqref{eq:ab-ab}, to
      obtain
      \[ f(ab,ab) \;=\; a_* f(b,b) + b_* f(a,a) + f(a,b) - 2(ab)_*f(a,b). \]
      Using this, we get
      \[
        \begin{aligned}
          \zeta(f)(ab) & \;=\; 2a_*b_*f(ab,ab) - f(ab,ab)                                    \\
                       & \;=\; 2a_*b_*f(b,b) + 2a_*b_*f(a,a) + 2a_*b_*f(a,b) - 4a_*b_*f(a,b) \\
                       & \quad - a_* f(b,b) - b_* f(a,a) - f(a,b) + 2(ab)_*f(a,b)            \\
                       & \;=\; a_*\zeta(f)(b)+b_*\zeta(f)(a)-f(a,b).
        \end{aligned}
      \]
      It follows that
      \[ \partial ^0\zeta (f)(a,b) \;=\; a_*\zeta(b)-\zeta(ab)+b_*\zeta(a)=f(a,b). \]
      as desired.
    \end{proof}

\subsection{The global case}

    Having shown that semilattices only have trivial coextensions, we now aim to show
    that the gluings of semilattices likewise have only trivial coextensions. In other
    words, we consider a monoid scheme \(X\) whose stalk \(\cO_{X,x}\) at every \(x\in
    X\) is a semilattice. We call such a monoid scheme a \emph{semilattice scheme}.
    Indeed, we will work in a more general setting, namely general semilattice sheaves
    over a poset \(P\).

    The following result extends Theorem~\ref{thm:sl_trivial} to semilattice sheaves.

    \begin{Th}
      Let \(\cF\) be a sheaf of semilattices. Then \(\sD^n(\cF, \cA) = 0\) for \(n = 0,
      1\). In particular, every coextension \(0 \to \cA \to \cG \to \cF \to 0\) has a
      unique splitting.
    \end{Th}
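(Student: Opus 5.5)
The plan is to show that \(\delta: C_P^{0,0}(\cF,\cA)\to Z^1(\cF,\cA)\) is an isomorphism of abelian groups. Injectivity gives \(\sD^0_P=\ker\delta=0\), surjectivity gives \(\sD^1_P=\CoKer\delta=0\). Then, by the global classification theorem, \(\coex(\cF,\cA)\simeq[C_P^{0,0}\xto{\delta}Z^1]\) is equivalent to the trivial groupoid, so every coextension splits, and uniquely. The natural tool is the stalkwise inverse \(\zeta_p:\ker\partial^1(\cF_p,\cA_p)\to\sC^0(\cF_p,\cA_p)\) from the proof of Theorem~\ref{thm:sl_trivial}, given by \(\zeta_p(f)(x)=\phi_x(f(x,x))\) with \(\phi_x=2x_*-\id\). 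That proof shows \(\zeta_p\) is a two-sided inverse of \(\partial^0\) at each stalk.

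\textbf{Injectivity.} Suppose \(\delta(h)=0\). Then \(f_h=0\) means \(\partial^0 h_p=0\) for every \(p\). Since \(\sD^0(\cF_p,\cA_p)=0\) by Theorem~\ref{thm:sl_trivial}, each \(h_p=0\), so \(h=0\). Note that the condition \(g_h=0\) is not even needed here.

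\textbf{Surjectivity.} Given \((f,g)\in Z^1\), I would set \(h(p,x):=\zeta_p(f(p,-,-))(x)\). Then \(f_h=f\) by the affine theorem. The remaining task, which I expect to be the main obstacle, is to show that \(g=g_h\). Put \(k:=g-g_h\). Since \(\delta(h)\in Z^1\) and \(Z^1\) is a group, \((0,k)=(f,g)-\delta(h)\in Z^1\). Condition~\eqref{eq:Z1-iv} with \(f=0\) then says exactly that \(k(p,q,-)\) is a derivation \(\cF_p\to\phi_{p,q}^*\cA_q\). But \(\phi_{p,q}^*\cA_q\) is again a system over the semilattice \(\cF_p\), so Theorem~\ref{thm:sl_trivial} gives \(\sD^0(\cF_p,\phi_{p,q}^*\cA_q)=0\), hence \(k=0\). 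Concretely, a derivation satisfies \(k(x)=k(x\cdot x)=2x_*k(x)\), so \(\phi_x(k(x))=0\), and \(\phi_x\) is invertible.

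In this approach, condition~\eqref{eq:Z1-iii} is never used beyond membership in \(Z^1\). The key observation is simply that every \(g\)-component is rigidly forced by the vanishing of \(\sD^0\) for the pulled-back systems. For the ``unique splitting'' clause, I would note that the automorphism group of any object is \(\pi_1\cong\ker\delta=0\). Combined with \(\pi_0=0\), this gives that each coextension is uniquely isomorphic to the semidirect product, i.e.\ it has exactly one splitting.
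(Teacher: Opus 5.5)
Your proof is correct and is essentially the paper's argument made explicit. The paper only notes that \(\delta\colon C_P^{0,0}\to Z^1\) is the truncation of the total complex of the bicomplex \(C_P^{\bullet,*}(\cF,\cA)\), whose columns have trivial low-degree cohomology by Theorem~\ref{thm:sl_trivial}. Your two steps are exactly the diagram chase behind that one-line claim in total degrees \(0\) and \(1\): first you remove \(f\) with the stalkwise \(\zeta_p\), which is the vanishing of \(H^0\) and \(H^1\) in column \(0\); then you kill the residual \(k\) using \(\sD^0(\cF_p,\phi_{p,q}^*\cA_q)=0\), which is the vanishing of \(H^0\) in column \(1\). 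Your version also makes clear that only vertical degrees \(0\) and \(1\) are needed, which is all Theorem~\ref{thm:sl_trivial} actually provides.
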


    \begin{proof}
      The corresponding global Grillet's complex is constructed as the truncation of
      the total complex of a bicomplex. By Theorem~\ref{thm:sl_trivial}, each column
      has trivial homology and it thus follows that the same is true for the total
      complex.
    \end{proof}

\end{document}